\documentclass[11pt, reqno]{amsart}
\oddsidemargin = -0.1cm \evensidemargin = -0.1cm \textwidth =6.4in
\textheight =7.5in
\usepackage{amsmath,amssymb,amsthm,amscd, amsxtra, amsfonts,graphicx,fancybox,cancel,framed, mathrsfs, stmaryrd, cite }
\usepackage{tabu}

\usepackage[all]{xy}
\usepackage{amssymb}
\usepackage{enumitem}
\usepackage{latexsym}
\usepackage{amscd}
\usepackage{color}
\usepackage{tikz}
\usepackage{verbatim}
\usepackage{esint}

\input amssym.def
\input amssym
%\addtolength{\oddsidemargin}{-0.5cm}

\theoremstyle{plain}
\newtheorem{theorem}{Theorem}[section]
\newtheorem*{theorem*}{Theorem}
\newtheorem{corollary}[theorem]{Corollary}
\newtheorem{conjecture}[theorem]{Conjecture}
\newtheorem{lemma}[theorem]{Lemma}
\newtheorem{proposition}[theorem]{Proposition}
\newtheorem*{conjecture*}{Conjecture}

\theoremstyle{definition}

\theoremstyle{remark}
\newtheorem*{remark}{Remark}
\newtheorem*{remarks}{Remarks}

\numberwithin{equation}{section}

\newcommand{\bea}{\begin{eqnarray}}
\newcommand{\eea}{\end{eqnarray}}
\newcommand{\be}{\begin {equation}}
\newcommand{\ee}{\end{equation}}

\newcommand{\Z}{\mathbb Z}

%\newcommand{\Wp}{\mathcal W} (p)}

%%%%%%%%%%%%%%%%%%%%%%%%%%%%%%%%%%%%%%%%%%%%%%%%%%%%%%%%%

\newcommand{\N}{\mathbb N}
\newcommand{\C}{\mathbb C}
\newcommand{\Q}{\Bbb Q}

\newcommand{\SL}{\operatorname{SL}}

\renewcommand{\pmod}[1]{\  \,  \left(  \mathrm{mod} \,  #1 \right)}

\renewcommand{\b}[1]{{\boldsymbol{#1}}}
\newcommand{\bb}[1]{{\boldsymbol{#1}}}

\newcommand{\sgn}{{\rm sgn}}

\usepackage{amssymb}

\allowdisplaybreaks

\makeatletter
\newcommand{\vast}{\bBigg@{4}}
\newcommand{\Vast}{\bBigg@{4.5}}
\makeatother

\renewcommand{\pmod}[1]{\  \,  \left(  \mathrm{mod} \,  #1 \right)}

\begin{document}

\title[ ]{Quantum modular forms and plumbing graphs of $3$-manifolds}
\author{Kathrin Bringmann, Karl Mahlburg, Antun Milas}

\address{Mathematical Institute, University of Cologne, Weyertal 86-90, 50931 Cologne, Germany}
\email{kbringma@math.uni-koeln.de}

\address{Department of Mathematics, Louisiana State University, Baton Rouge, LA 70803, USA}
\email{mahlburg@math.lsu.edu}

\address{Department of Mathematics and Statistics, SUNY-Albany, Albany, NY 12222, U.S.A.}
\email{amilas@albany.edu}

\thanks{The research of the first author is supported by the Alfried Krupp Prize for Young University Teachers of the Krupp foundation and by the Collaborative Research Centre / Transregio on Symplectic Structures in Geometry, Algebra and Dynamics (CRC/TRR 191) of the German Research Foundation. The third author was partially supported by NSF Grant DMS-1601070.}

\subjclass[2010] {11F27, 11F37, 14N35, 57M27, 57R56 }

\keywords{quantum invariants; plumbing graphs; quantum modular forms; Cartan matrices}

\maketitle

\begin{abstract} In this paper, we study quantum modular forms in connection to quantum invariants of plumbed 3-manifolds introduced recently by Gukov, Pei,  Putrov, and Vafa.
We explicitly compute these invariants for any $3$-leg star plumbing graphs whose associated matrix is unimodular and positive definite.
For these  graphs we confirm a quantum modularity conjecture of Gukov.
We also analyze the invariants for general $n$-leg star graphs with unimodular plumbing matrices, and prove that they can be expressed as linear combinations of quantum modular forms.

\end{abstract}

\section{Introduction and statement of results}

\subsection{Introduction}

Quantum invariants are important numerical invariants  of knots and 3-manifolds and are originally defined in \cite{W} using tools of quantum field
theory. Witten conjectured the existence of topological
invariants of 3-manifolds generalizing the Jones polynomial to links in arbitrary closed oriented 3-manifolds.
Using a modular tensor category coming from the quantum group $\text{U}_q(\frak{sl}_2)$ at roots of unity, Reshetikhin and Turaev \cite{RT} gave a rigorous
construction of $3$-manifold invariants associated to $\text{SU}(2)$. These invariants are called the \textit{Witten-Reshetikhin-Turaev} (WRT) invariants and are often denoted by $\tau_{\zeta}(M)$, where $\zeta$ is a $k$-th root of unity (here $k \in \mathbb{N}$ is called the {\em level}).

The concept of unified WRT invariants, introduced by \cite{H}, considers WRT invariants at all $k \in \mathbb{N}$.
For integral homology spheres, Habiro constructed invariants taking values in a completion $\widehat{\mathbb{Z}}[q] := \varprojlim \mathbb{Z}[q]/((q;q)_n)$, where, for $n\in\N_0\cup\{\infty\}$, $(a;q)_n:=\prod_{j=0}^{n-1} (1-aq^j)$, such that evaluation  at each root of unity $\zeta$ gives the WRT invariants. In a special case, invariants of this type (``analytic'' at roots of unity) appeared previously in work of Lawrence-Zagier \cite{LZ} on the Poincar\'e homology sphere $\Sigma(2,3,5)$ and work of Zagier \cite{Za1} on Vassiliev invariants. The latter paper was the first appearance of Kontsevich's intriguing series $F(q):=\sum_{n \geq 0} (q;q)_n$.  Quite remarkably, Zagier also constructed functions in the upper and lower half-plane that asymptotically agree with $f(\tau):=F(q)$ ($q \!:=\!e^{2\pi i \tau}, \tau \in \mathbb{H}$) at all rational numbers. This function constitutes an example of a quantum modular form. This notion was formalized by Zagier in \cite{Za2}, where he defined a {\it quantum modular form} to be a complex-valued function defined on the rational numbers that is essentially modular (up to a correction factor that has ``nice'' analytic properties).  Further examples of unified invariants (and quantum modular forms) for knots/links and 3-manifolds were considered by Hikami and other authors \cite{Hi1,Hi2,Hi3,Hikami,Hikami2}. There are other important aspects of quantum modular forms including  Maass forms, Eichler integrals, combinatorial generating functions, and meromorphic Jacobi forms. In addition, quantum modular forms recently appeared in representation theory of infinite-dimensional Lie algebras and vertex algebras in the form of characters of irreducible modules \cite{BM,CM}. Another interesting direction concerns $q$-series identities for expressions coming from unified WRT invariants \cite{BHL,Hikami2}.

In a very interesting recent paper \cite{GPPV}, Gukov, Pei, Putrov, and Vafa introduced a new approach to WRT invariants of 3-manifolds that was motivated by certain dualities in physics.
In this paper, among many other things, the authors defined
quantum invariants of various families of $3$-manifolds $\b{M}_3$, including plumbed 3-manifolds. A \textit{plumbed $3$-manifold}
$\b{M}_3({G})$ is associated to a labeled graph ${G}$, so that $\b{M}_3(G)$ is obtained by a Dehn surgery on the corresponding link of unknots. If the linking matrix
$M$ is positive definite\footnote{In \cite{GPPV}, $M$ is negative definite, which we have accounted for by replacing it by $-M$ when referring to their work.}, the authors in \cite{GPPV} defined a $q$-series denoted by $\widehat{Z}_{{\b{a}}}(q)$ with integral coefficients, and argued that the limiting values of $Z(q)$ at roots of unity are expected to capture the WRT invariants discussed above.
The key novelty of the approach is that the definition of $\widehat{Z}_{{\b{a}}}(q)$ is based on a straightforward contour integration, and the invariants are defined as functions in the upper half-plane rather than at roots of unity. In \cite{GPPV}, several examples of $\widehat{Z}_{{\b{a}}}(q)$ series  were computed in terms of unary false theta functions, which are known to be quantum modular forms (see for instance \cite{BM}).

We note that functions closely related to $\widehat{Z}_{{\b{a}}}(q)$ already appeared in the literature on the so-called higher rank singlet $W$-algebras denoted by $W(p)^0_{Q}$ \cite{BM2,CM}. In \cite{FG} a more
direct link between quantum invariants from \cite{GPPV} and vertex algebras was given.

\subsection{Statement of the results}

Based on several examples calculated in \cite{GPPV},  Gukov \cite{Gukov} conjectured a striking general characterization of the analytic properties of $\widehat{Z}_{{\b{a}}}(q)$.
\begin{conjecture}
\label{C:Gukov}
For any tree and labeling such that
$M$ is positive definite, $\widehat{Z}_{{\b{a}}}(q)$ is a quantum modular form.
\end{conjecture}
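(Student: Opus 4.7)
My plan is to reduce the conjecture to an explicit $q$-series computation for each plumbed tree and then identify the resulting expression as a linear combination of unary false theta functions, which are already known to be quantum modular. The argument splits into three stages.

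First, I would begin from the contour integral definition of $\widehat{Z}_{\mathbf{a}}(q)$ given in \cite{GPPV}, which for a labeled tree $G$ with plumbing matrix $M$ expresses the invariant as a multivariable residue of
\begin{equation*}
\prod_{v \in V(G)} \bigl(x_v - x_v^{-1}\bigr)^{2 - \deg(v)} \cdot \Theta_{M, \mathbf{a}}(\mathbf{x}; q),
\end{equation*}
where $\Theta_{M,\mathbf{a}}$ is the Gaussian theta series attached to $M$. On a tree, interior vertices have degree $\geq 2$, so nontrivial $(x_v - x_v^{-1})^{2-\deg(v)}$ factors appear only at the leaves (with exponent $+1$) and at the branching points (with negative exponents). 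For a star plumbing this isolates a single central vertex of degree $n$ and the $n$ outer leaves.

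Second, I would Laurent-expand each $(x_v - x_v^{-1})^{2-\deg(v)}$ and compute the integral term-by-term. Unimodularity of $M$ makes $M^{-1}$ integral, which ensures that the support of the resulting lattice sum lies in $\mathbb{Z}^n$ rather than on a fractional lattice. Collapsing the degree-$2$ vertices along each leg should then reduce the multiple sum to a one-dimensional series of the shape
\begin{equation*}
\sum_{n \in \mathbb{Z}} \psi(n)\, q^{Q(n)},
\end{equation*}
with $Q$ a positive definite rational quadratic and $\psi$ a periodic sign. These are precisely the unary false/partial theta series considered in \cite{BM,LZ,Za1,Za2}. Third, I would import the Eichler-integral constructions of Lawrence--Zagier \cite{LZ}, Zagier \cite{Za2}, and Bringmann--Milas \cite{BM}: each such series admits a real-analytic companion on $\mathbb{H}$ whose boundary values at rationals differ from the $q$-series by a $C^\infty$ cocycle, establishing quantum modularity. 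Combining the finite combination produced in stage two then yields the statement for $\widehat{Z}_{\mathbf{a}}(q)$.

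I expect the main obstacle to be the second stage. For a star with $n \geq 4$ legs the factor $(x_0 - x_0^{-1})^{2-n}$ at the central vertex has a pole of high order, so its Laurent expansion mixes positive and negative powers that couple all legs through $M^{-1}$. Tracking the cancellations and showing that what survives is genuinely a finite linear combination of \emph{unary} false theta functions (rather than an honestly multivariable object) is the combinatorial heart of the problem, and it is also what explains why the cleanest results in the paper are stated only for $3$-leg stars. Extending from star plumbings to arbitrary trees would further require handling several branching points simultaneously, presumably by induction along the tree, and this step is not yet visible from the star analysis.
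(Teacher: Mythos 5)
The statement you are trying to prove is a \emph{conjecture} in this paper: the authors do not prove it, and your proposal does not close the gap either. What the paper actually establishes (Theorem \ref{T:main}) is the conjecture for $3$-leg star graphs with unimodular plumbing matrix, plus a weaker decomposition result for $n$-leg stars. Your three stages faithfully reproduce that partial argument --- the contour integral is reduced to a constant-term extraction (Propositions \ref{P:Z3star} and \ref{P:Zlstar}), the lattice sum collapses to one-dimensional series, and these are matched against the false theta functions $F_{j,p}$ of \cite{BM} --- so for $3$-leg stars your outline is essentially the paper's proof of Theorem \ref{T:Zqquantum}.

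The genuine gap is in your second stage, and it is worse than a combinatorial bookkeeping problem. For a central vertex of degree $n\geq 4$, reducing the order-$(n-2)$ pole via Proposition \ref{P:Resa} produces series of the form $\sum_m P(m)\,\sgn(m)\,q^{Q(m)}$ with $P$ a nonconstant polynomial (a Pochhammer symbol, see \eqref{Pochtwised}). These do \emph{not} decompose into weight-$\frac12$ unary false theta functions alone: one unavoidably gets derivatives of the weight-$\frac32$ functions $G_{j,p}$, whose quantum sets $\widehat{\mathcal{Q}}_{2p,j}$ are proper subsets of $\mathbb{Q}$ and can be pairwise disjoint for different $j$. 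Consequently the finite linear combination you produce in stage three need not be quantum modular on any common dense subset of $\mathbb{Q}$ --- and the paper's Proposition \ref{P:4star} exhibits an explicit $4$-leg star for which $Z(q)$ diverges as $q\to 1^-$, so the conjecture as stated (with quantum set $\mathbb{Q}$) already requires reinterpretation beyond three legs. Your final paragraph correctly flags the $n\geq 4$ difficulty, but identifying it as ``tracking cancellations'' understates the issue: the obstruction is that the surviving object is not a combination of unary weight-$\frac12$ false thetas at all. The extension to trees with several branch points is likewise entirely open and is not addressed by any induction currently available.
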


This paper is a first contribution towards the resolution of Gukov's conjecture.
We present a detailed analysis of  $\widehat{Z}_{0}(q)$ for unimodular graphs and manifolds coming from $n$-leg star graphs, which are also sometimes known as $n$-spiders, where $n$ denotes the number of leaves or legs. More precisely, we first introduce a closely related integral, denoted by $Z(q)$, for which we show that it agrees with $\widehat{Z}_{0}(q^2)$ for all unimodular positive definite plumbing matrices.
Our main result verifies the validity of the conjecture for all $3$-leg star graphs with unimodular plumbing matrices, and also provides strong evidence that quantum modularity is most likely true for general star graphs if we allow the quantum set to be a proper subset of $\mathbb{Q}$.
\begin{theorem}
\label{T:main}
	\begin{enumerate}[leftmargin=*, label=\textnormal{(\arabic*)},align=left]
		\item \label{T:main:3leg} For any $3$-leg star graph, there is some $c_M \in \Q$ such that $q^{c_M}Z(q)$ is a quantum modular form.
		\item \label{T:main:nleg} For any $n$-leg star graph, we can write $Z(q) = F_1(q) + \cdots F_r(q)$, where there are $c_{M,j} \in \Q$ such that each $q^{c_{M,j}} F_j(q)$ is a quantum modular form.
			\end{enumerate}
\end{theorem}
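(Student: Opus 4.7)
The plan is to compute $Z(q)$ explicitly via contour integration and residue calculus, reduce the result to a linear combination of unary false (or partial) theta functions, and then invoke their known quantum modular properties to conclude.

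First, I would make the contour integral defining $Z(q)$ fully explicit for a star graph. For an $n$-leg star with central vertex of degree $n$, the GPPV integrand contains a factor $(z_0 - z_0^{-1})^{2-n}$ at the center, factors $(z-z^{-1})$ at the leaves, trivial contributions at the intermediate degree-$2$ vertices, and a Gaussian-type $q$-factor coming from $M^{-1}$. Using that $M$ is unimodular and positive definite, I would solve $M^{-1}$ in closed form for the star; this is standard Seifert-fibered data, expressible in terms of the leg weights $p_j$ and the central framing $a_0$. I would then integrate out the leg variables one leg at a time, which is a bookkeeping exercise in residues and geometric series expansions. What remains is a one-variable contour integral in the central variable $z_0$, whose integrand is $(z_0 - z_0^{-1})^{2-n}$ times a one-dimensional lattice sum with quadratic exponent in $q$.

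Next, the factor $(z_0 - z_0^{-1})^{2-n}$ is the heart of the matter. For $n=3$ this is a simple pole at $z_0 = \pm 1$, and a direct residue computation converts the integral into a linear combination of unary false theta functions of the form
\begin{equation*}
\Psi^{(a,P)}(\tau) \;=\; \sum_{n \geq 0} \psi(n)\, q^{\frac{(Pn+a)^2}{4P}},
\end{equation*}
where $\psi$ is an explicit odd periodic sign function and $P$ is determined by $\det(M)=1$ and the leg data. These are the Lawrence--Zagier type series, known to be quantum modular forms (see \cite{BM, LZ}). After multiplying by a suitable $q^{c_M}$ to absorb the rational shift $\tfrac{a^2}{4P}$, one obtains a quantum modular form in the usual normalization, proving part \ref{T:main:3leg}. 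For part \ref{T:main:nleg}, when $n \geq 4$ one instead has a pole of order $n-2$ at $z_0 = \pm 1$, so the residue yields a finite sum of derivatives or difference transforms of unary false theta series. Each such summand is still a one-dimensional quadratic lattice sum with a periodic coefficient and therefore, after normalization by a suitable $q^{c_{M,j}}$, falls into the quantum-modular false-theta class.

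The main obstacle is the higher-pole analysis for general $n \geq 4$: one must verify that the expansion of $(z_0 - z_0^{-1})^{2-n}$ around $z_0 = \pm 1$, combined with the periodic coefficients produced leg by leg, yields sums whose quadratic form has the correct periodicity to fit the quantum modular false-theta framework, and that the number $r$ of summands and the rational shifts $c_{M,j}$ can be controlled in terms of the leg data. Because both the periodicity $P$ and the shift $a$ produced on each leg depend nontrivially on the leg weights via $M^{-1}$, in general the individual $F_j(q)$ cannot be collected into a single quantum modular form with a common multiplier $q^{c_M}$; this is precisely why part \ref{T:main:nleg} is stated as a decomposition rather than a single-form assertion as in part \ref{T:main:3leg}.
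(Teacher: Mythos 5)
Your overall strategy --- integrate out the leaf and degree-two variables, reduce to a one-variable lattice sum against $(w-w^{-1})^{2-n}$ at the central vertex, and identify the result with (derivatives of) unary false theta functions --- is exactly the route the paper takes, so the architecture of your argument is sound. Three points, however, need repair. First, the integrand has poles \emph{on} the contour $|w|=1$ at $w=\pm1$; this is why $Z(q)$ is defined as a principal value, and before any ``direct residue computation'' you must show these on-contour singularities contribute nothing. The paper does this via an antisymmetry argument (Lemma \ref{L:removable} with Propositions \ref{P:Z3star} and \ref{P:Zlstar}): the inner integral $h$ satisfies $h(w^{-1})=-h(w)$ up to a power of $w$, which forces the residues at $\pm1$ to vanish and lets one replace the PV integral by a constant term at $w=0$. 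Second, Theorem \ref{T:main} assumes only that $M$ is positive definite, not unimodular, so your plan to ``solve $M^{-1}$ in closed form using unimodularity'' covers only a special case; the general case is handled by Lemma \ref{help}, which converts membership in $M\Z^N$ into a congruence on ${\rm adj}(M)\b{m}$ modulo $D=\det(M)$ and merely rescales the periods of the resulting false thetas.

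Third, and most substantively for part \ref{T:main:nleg}: the order-$(n-2)$ pole at the center produces, after the reduction of Corollary \ref{C:CTn}, a Pochhammer factor $\left(\frac{m-\ell+5}{2}\right)_{\ell-4}$ multiplying $\sgn(m)q^{\frac{a}{2}\left(m+\frac{b}{a}\right)^2}$. Splitting this polynomial into its even and odd parts in $m+\frac{b}{a}$ yields not only derivatives $P_1(\mathcal D)F_{j,p}$ of weight-$\frac12$ false thetas but also derivatives $P_2(\mathcal D)G_{j,p}$ of the weight-$\frac32$ series $G_{j,p}(\tau)=\sum_{m}\left|m+\tfrac{j}{2p}\right|q^{\left(m+\frac{j}{2p}\right)^2}$. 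The latter do not belong to the ``quantum-modular false-theta class'' in the same sense: by Proposition \ref{quantum-32} their quantum sets $\widehat{\mathcal{Q}}_{2p,j}$ are proper subsets of $\Q$ depending on $j$ and can be pairwise disjoint, and $Z(q)$ can genuinely fail to extend to all roots of unity (Proposition \ref{P:4star}). This --- rather than the leg-dependence of the periods and shifts that you cite --- is the precise obstruction that forces part \ref{T:main:nleg} to be stated as a decomposition. One also needs the paper's convention that the cocycle is real-analytic, so that quantum modularity survives the differentiation by $\mathcal D$. With these repairs your argument coincides with the paper's.
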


As a corollary, we obtain that for any positive definite unimodular $3$-leg star plumbing graph, there is some $c_M \in \mathbb{Q}$ such that $q^{c_M} \widehat{Z}_{0}(q)$ is a quantum modular form of weight $\frac12$ with quantum set $\mathbb{Q}$.

\begin{remarks}
(1) To clarify, the main difference between the two statements in Theorem \ref{T:main} is that in the case of $n$-leg star graphs, the quantum sets of the summands do not necessarily coincide. For more precise statements, see Theorems \ref{T:Zqquantum} and \ref{n-star} below.\\
(2) In a paper \cite{CCFGH} that appeared as a preliminary version of this paper was ready, Cheng, Chun, Ferrari, Gukov, and Harrison independently calculated  $\widehat{Z}_{\b{a}}(q)$ for a large number of additional examples of $3$-star graphs, as well as an example of a 4-star graph (see Section 8 of \cite{CCFGH}).
\end{remarks}

The paper is organized as follows. In Section 2, we introduce the function $Z(q)$ and prove that it agrees with $\widehat{Z}_{0}(q)$ from  \cite{GPPV} for any unimodular  plumbing matrix. In Section 3 we present elementary facts on
rational functions and define the quantum modular forms in this paper. In Section 4 we prove our first main result, Theorem \ref{T:Zqquantum}, on quantum modularity of $3$-leg star graphs.
Section 5 contains explicit computations of $Z(q)$ for all $D$ and $E$ type Dynkin diagrams. In Section 6 we prove a version of quantum modularity for
all star graphs.  We also present an example illustrating that the quantum set of $Z(q)$ can be smaller than $\mathbb{Q}$. We end in Section 7 with several remarks and directions for future work.

\section*{Acknowledgments}
 We thank S. Gukov for correspondence regarding a preliminary version of this paper, H. Hikami for bringing \cite{Hi1,Hi2,Hi3} to our attention, and the referees for their many helpful comments.

\section{Definitions and notation}

\subsection{The quantum invariant}

Consider a tree $G=(V,E)$ \footnote{Note that in \cite{GPPV} $G$ was used to denote the gauge group (which is not discussed in this paper).} with $N$ vertices numbered $1, 2, \ldots, N$. For such a $G$, we choose a symmetric integral matrix $M=(m_{jk})_{1 \leq j,k \leq N}$ such that $m_{jk}=-1$ if vertex $j$ is connected to vertex $k$ and zero otherwise. The diagonal entries $m_{j j} \in \mathbb{Z}$, $1 \leq j \leq N$ are known as {\em framing coefficients}, which for our purposes, may be freely chosen subject to the restriction that $M$ is positive definite (which ensures that the integrals below actually define Laurent series). We typically label the vertices of $G$ by these coefficients.

The first homology group of $\b{M}_3:=\b{M}_3(G)$ (plumbed $3$-manifold
constructed from $G$) is
$$H_1(\b{M}_3,\mathbb{Z}) \cong {\rm coker}(M) =\mathbb{Z}^N/M\mathbb{Z}^N.$$
 If $M$ is invertible then this group is finite and if $M \in \text{SL}_N(\mathbb{Z})$   (e.g. $G=E_8$),
then $H_1(\b{M}_3,\mathbb{Z})=0$.

To each edge joining vertices $j \neq k$ in $G$, we associate a rational function
$$
f(w_j,w_k):=\frac{1}{\big(w_j-w_j^{-1}\big)\big(w_k-w_k^{-1}\big)}
$$
and to each vertex $w_j$ a Laurent polynomial
$$
g(w_j):= \big(w_j - w_j^{-1}\big)^2.
$$

In \cite{GPPV}, a quantum invariant for plumbing graphs was introduced. In particular, given $G$ and $M$, set (throughout we use the vector notation ${\b w}:=(w_1,...,w_N)^T$)
\begin{equation}
\label{E:widehatZ}
\widehat{Z}_{\b a} (q):= \frac{q^{\frac{-3N+{\rm tr}(M)}{4}}}{(2\pi i)^N} \text{PV} \int_{|w_j|=1} \prod_{j=1}^N g(w_j) \prod_{(k,\ell) \in E} f(w_k,w_\ell) \Theta_{-M,\b a}(q;{{\b w}}) \frac{dw_j}{w_j},
\end{equation}
where  ${\rm PV}$ means the Cauchy principal value. Here
$\int_{|w_j|=1}$ indicates the integration $\int_{|w_1|=1}$ $\cdots \int_{|w_N|=1}$, and the theta function is defined by
\begin{equation*}
\Theta_{-M,\b a}(q;{\b w})=\sum_{{\bb \ell}  \in 2 M \mathbb{Z}^N + \b a} q^{\frac14\b{\ell}^T M^{-1} \b{\ell}} \prod_{j=1}^N w_j^{\ell_j}, \ \ \b a \in 2{\rm coker}(M)+ \bb \delta;
\end{equation*}
the shift is defined by $\bb \delta:=(\delta_j)$ such that $\delta_j \equiv {\rm deg}(v_j) \pmod 2$, where $\delta_j$ denotes the degree (valency) of the $j$-th node. Furthermore, the simplest shift is denoted in \cite{GPPV} by $\widehat{Z}_{0}(q) := \widehat{Z}_{\bb{\delta}}(q)$.

\subsection{The $Z$-integral}

We now introduce a closely related integral that is somewhat easier to work with analytically, and is equivalent to $\widehat{Z}_0(q)$ in an important special case, as we see below. For positive definite $M$ as above, we define
\begin{equation}
\label{Our:ZqDef}
Z(q):= \frac{q^{\frac{-3N+{\rm tr}(M)}{2}}}{(2\pi i)^N} \text{PV} \int_{|w_j|=1} \prod_{j=1}^N g(w_j) \prod_{(k,\ell) \in E} f(w_k,w_\ell) \Theta_{M}(q;{{\b w}}) \frac{dw_j}{w_j},
\end{equation}
where
\begin{equation} \label{E:Theta2}
\Theta_{M}(q;{\b w}):=\sum_{{\b{m}}  \in  M \mathbb{Z}^N } q^{\frac12\b{m}^T M^{-1} \b{m}} \prod_{j=1}^N w_j^{m_j}.
\end{equation}
Note that the only differences between \eqref{E:widehatZ} and \eqref{Our:ZqDef} are in the normalization of the $q$-powers and the condition in the summation in the theta functions.

\section{Preliminaries}
\label{S:Prelim}

\subsection{Basic facts for rational functions}
\label{S:Prelim:ratl}

In order to calculate $Z(q)$, we need to compute principal value integrals for various series and rational functions. We first observe that for $m\in\Z$,
\begin{equation}
\label{E:CTw-w^-1}
\begin{aligned}
\frac{1}{2 \pi i}  {\rm PV} \int_{|w|=1}\left(w-w^{-1}\right)w^m\frac{dw}{w}
&= \frac{1}{2\pi i} \int_{|w|=1}\left(w-w^{-1}\right)w^m\frac{dw}{w} \\
& = \text{CT}_{w}\left(\left(w-w^{-1}\right)w^{m}\right)
=\delta_{m=-1}-\delta_{m=1},
\end{aligned}
\end{equation}
where $\operatorname{CT}_w(h(w))$ denotes the constant term of the meromorphic function $f$ around $w=0$, and for a predicate $P$, we use the indicator notation
\begin{equation*}
\delta_P := \begin{cases} 1 \quad &\text{if } P \text{ is true}, \\
0 & \text{if } P \text{ is false}. \end{cases}
\end{equation*}

We next give a simple test that we use throughout the paper in order to reduce principal value integrals to constant term evaluations.
\begin{lemma}
	\label{L:removable}
	Suppose that $h(w)=\sum_{\ell \in \mathbb{Z}} a_\ell w^\ell$ satisfies
	$h(w^{-1})=- h(w)w^{2m}$ for some $m\in \Z$.
	Then
	$$\frac{1}{2 \pi i} {\rm PV} \int_{|w|=1} \frac{h(w) dw}{w(w-w^{-1})}={\rm CT}_w \left(\frac{h(w)}{w-w^{-1}}\right).$$
\end{lemma}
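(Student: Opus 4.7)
The plan is to exploit the reflection symmetry $h(w^{-1}) = -h(w)\,w^{2m}$ to show that the integrand has only removable singularities on the unit circle, so that the principal value integral collapses to an ordinary contour integral which can then be evaluated by residues.

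First I would extract the crucial numerical consequence of the functional equation by specializing it at $w = \pm 1$: at $w = 1$ it gives $h(1) = -h(1)$, and at $w = -1$ it gives $h(-1) = -h(-1)(-1)^{2m} = -h(-1)$, so both $h(1) = 0$ and $h(-1) = 0$. Rewriting the integrand in the convenient form
$$\frac{h(w)}{w(w - w^{-1})} \;=\; \frac{h(w)}{w^2 - 1} \;=\; \frac{h(w)}{(w-1)(w+1)},$$
the vanishing of $h$ at $w = \pm 1$ shows that the two points on $|w|=1$ where the denominator vanishes are in fact removable singularities. Consequently the integrand extends to a meromorphic function that is analytic in a neighborhood of the unit circle, so the principal value prescription is unnecessary and
$$\frac{1}{2\pi i}\,\mathrm{PV}\!\int_{|w|=1} \frac{h(w)\,dw}{w(w - w^{-1})} \;=\; \frac{1}{2\pi i} \oint_{|w|=1} \frac{h(w)\,dw}{w^2 - 1}.$$

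Next I would evaluate the right-hand contour integral via the residue theorem. In the applications of this lemma, $h$ is a Laurent polynomial in $w$ (with coefficients depending parametrically on $q$ and on the remaining integration variables), so the only pole of $\frac{h(w)}{w^2 - 1}$ inside $|w|<1$ after the removability at $\pm 1$ sits at $w = 0$. By definition, the residue at $w = 0$ is the coefficient of $w^{-1}$ in the Laurent expansion of $\frac{h(w)}{w^2-1}$ about the origin, and this coefficient equals
$$\operatorname{CT}_w\!\left(\frac{w\cdot h(w)}{w^2 - 1}\right) \;=\; \operatorname{CT}_w\!\left(\frac{h(w)}{w - w^{-1}}\right),$$
which is precisely the right-hand side of the claim.

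The only mild technical point, and the one requiring any care, is justifying the passage from the PV integral to the ordinary contour integral once removability is established: since $\frac{h(w)}{w^2-1}$ is bounded on a neighborhood of $\pm 1$ (the limiting values being $\tfrac12 h'(1)$ and $-\tfrac12 h'(-1)$ by L'H\^opital), the arcs of radius $\varepsilon$ excluded in the PV definition contribute $O(\varepsilon)$ and vanish as $\varepsilon \to 0^+$. Everything else is a single bookkeeping step with the definition of the constant term and one application of the residue theorem.
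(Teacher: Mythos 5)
Your proof is correct and follows essentially the same route as the paper: both arguments use the functional equation at the fixed points $w=\pm 1$ of the inversion $w\mapsto w^{-1}$ to show that the on-contour singularities of $\frac{h(w)}{w^2-1}$ contribute nothing, and then identify the remaining integral with the residue at $w=0$, i.e.\ the constant term. The only cosmetic difference is that you deduce $h(\pm 1)=0$ and phrase the conclusion as removability, whereas the paper symmetrizes $h(w)=\tfrac12\bigl(h(w)-h(w^{-1})w^{-2m}\bigr)$ and computes the residues at $\pm 1$ to be zero directly.
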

\begin{proof}
The only poles in the integrand are at $w = 0, \pm 1$, and the lemma statement amounts to the claim that the residues at $w = \pm 1$ make no contribution. This is verified by calculating
\begin{align*}
{\rm Res}_{w=\pm 1} &\left( \frac{h(w)}{w\left(w-w^{-1}\right)} \right)
= \frac12 {\rm Res}_{w= \pm 1} \left(\frac{h(w) - h(w^{-1}) w^{-2m}}{w^2 - 1} \right) \\
& = \frac12 \lim_{w \to \pm 1} (w \mp 1) \frac{h(w) - h\left(w^{-1}\right) w^{-2m}}{w^2 - 1}
= \pm \frac14 \lim_{w \to 1} \left(h(w) - h\left(w^{-1}\right) w^{-2m}\right) = 0.
\end{align*}
\vskip-2em
\end{proof}
\vskip1em
We can then make use of the identity
\begin{equation}
\label{E:CT1/w-w^-1}
\text{CT}_{w}\left(\frac{w^m}{w-w^{-1}}\right)=\begin{cases}
-1\quad&\text{if }m\leq -1 \text{ is odd},\\
0&\text{otherwise.}
\end{cases}
\end{equation}
Furthermore, \eqref{E:CT1/w-w^-1} also implies that
\begin{equation}
\label{E:CT=sgno}
\text{CT}_{w}\left(\frac{w^m - w^{-m}}{w-w^{-1}}\right) = \begin{cases}
\sgn(m)&\quad \textnormal{ if $m$ is odd,}\\
0&\quad \textnormal{ otherwise.}
\end{cases}
\end{equation}
In fact, there is a more general identity for $m \in \Z,  \ell \in \mathbb{N}_0$, namely
\begin{equation}
\label{E:CTmn}
{\rm CT}_w \left(\frac{w^m - w^{-m}}{w^\ell \left(w - w^{-1}\right)}\right)
= \delta_{|m| \geq \ell+1} \ \delta_{m \equiv \ell+1 \pmod{2}} \ \sgn(m).
\end{equation}

The following result helps us reduce general residue calculations to the above cases, which are straightforward as they only involve simple poles.
\begin{proposition}
\label{P:Resa}
Suppose that $h$ is a meromorphic function.
For any $\ell\in\N$ and $a \in \C$, we have
\begin{align*}
%\label{E:Resa}
{\rm Res}_{w = a} \left(\frac{h(w)}{w (w-w^{-1})^{\ell+1}}\right)
= \frac{1}{2\ell} {\rm Res}_{w = a} \left(\frac{1}{w^\ell (w-w^{-1})^\ell} \frac{d}{d w} \left(h(w) w^{\ell-1}\right)\right).
\end{align*}
\end{proposition}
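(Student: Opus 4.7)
The plan is to reduce both sides to a common form by clearing the fractions $(w - w^{-1})$ in favor of $(w^2 - 1)$, and then apply an integration-by-parts identity. Writing $w - w^{-1} = (w^2-1)/w$, one has
\[
\frac{h(w)}{w(w - w^{-1})^{\ell+1}} = \frac{h(w) w^\ell}{(w^2-1)^{\ell+1}},
\qquad
\frac{1}{w^\ell (w - w^{-1})^\ell} = \frac{1}{(w^2-1)^\ell},
\]
so the claimed identity becomes
\[
\mathrm{Res}_{w=a} \left(\frac{h(w) w^\ell}{(w^2-1)^{\ell+1}}\right)
= \frac{1}{2\ell}\, \mathrm{Res}_{w=a}\left(\frac{(h(w) w^{\ell-1})'}{(w^2-1)^\ell}\right).
\]

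The key observation is that $\frac{d}{dw}\bigl((w^2-1)^{-\ell}\bigr) = -2\ell w (w^2-1)^{-\ell-1}$, which allows me to rewrite the left-hand integrand as
\[
\frac{h(w) w^\ell}{(w^2-1)^{\ell+1}}
= -\frac{1}{2\ell}\, h(w)\, w^{\ell-1} \frac{d}{dw}\!\left(\frac{1}{(w^2-1)^\ell}\right).
\]
Applying the product rule in the form
\[
h(w) w^{\ell-1} \frac{d}{dw}\!\left(\frac{1}{(w^2-1)^\ell}\right)
= \frac{d}{dw}\!\left(\frac{h(w) w^{\ell-1}}{(w^2-1)^\ell}\right) - \frac{(h(w) w^{\ell-1})'}{(w^2-1)^\ell}
\]
expresses the left-hand integrand as the sum of a total derivative and $\frac{1}{2\ell}\, (h(w)w^{\ell-1})'/(w^2-1)^\ell$.

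To conclude, I would invoke the fact that the residue at any point of the derivative of a meromorphic function vanishes (since a Laurent expansion contains no $w^{-1}$ term after differentiation). Taking $\mathrm{Res}_{w=a}$ of the identity above therefore yields exactly the desired formula. There is no real obstacle; the whole content is the integration-by-parts trick together with the trivial vanishing of residues of exact forms.
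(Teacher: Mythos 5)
Your proof is correct and is essentially the paper's own argument: both rest on the derivative evaluation $\frac{d}{dw}\bigl((w^2-1)^{-\ell}\bigr)=-2\ell w(w^2-1)^{-\ell-1}$ together with integration by parts for residues, i.e.\ the vanishing of the residue of a total derivative. Rewriting $w^\ell(w-w^{-1})^\ell$ as $(w^2-1)^\ell$ is only a cosmetic simplification of the same computation.
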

\begin{proof}
The claim follows from the simple fact that for meromorphic functions $h_1$ and $h_2$, we have
\begin{align*}
	{\rm Res}_{w=a} \left(h_1'(w) h_2(w)\right) = - {\rm Res}_{w=a} \left(h_1(w)h_2'(w)\right).
\end{align*}
The specific shape of the statement then follows from the derivative evaluation
\begin{align*}
w^{\ell-1} \frac{d}{d w} \left(\frac{1}{w^\ell(w-w^{-1})^\ell}\right)=-\frac{2\ell}{w(w-w^{-1})^{\ell+1}}.
\end{align*}
\vskip-2.5em
\end{proof}
\vskip2em
As an immediate application, we see that this aids in the calculation of the constant term for more complicated rational functions. First, we recall the (rising) Pochhammer symbol, $(x)_n := \prod_{j=0}^{n-1} (x+j)$ for $ n\in \N_0$. We also need the simple symmetry relation
\begin{equation}
\label{E:Poch}
(-x-n+1)_n=(-1)^n (x)_n.
\end{equation}

\begin{corollary}
\label{C:CTn}
For $a \in \C$, $m \in \Z$ and $\ell\in \N$, we have
\begin{equation*}
{\rm Res}_{w=a} \left(\frac{w^m}{w\left(w - w^{-1}\right)^\ell}\right)
= \frac{\left(\frac{m-\ell}{2}+1\right)_{\ell-1}}{(\ell-1)!} {\rm Res}_{w=a} \left(\frac{w^{m-\ell+1}}{w(w-w^{-1})}\right).
\end{equation*}
\end{corollary}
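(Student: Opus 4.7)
The plan is to apply Proposition \ref{P:Resa} iteratively, reducing the exponent of $(w-w^{-1})$ in the denominator by one at each step. The base case $\ell=1$ is trivial because the Pochhammer symbol $\left(\frac{m-1}{2}+1\right)_0$ is the empty product, which equals $1$. For $\ell \geq 2$, I would set $h(w)=w^m$ and apply Proposition \ref{P:Resa} with its internal parameter equal to $\ell-1$; after computing $\frac{d}{dw}(w^m\cdot w^{\ell-2}) = (m+\ell-2)w^{m+\ell-3}$ and canceling a factor of $w^{\ell-1}$ from the denominator, this yields the one-step recursion
\begin{equation*}
{\rm Res}_{w=a}\left(\frac{w^m}{w(w-w^{-1})^{\ell}}\right) = \frac{m+\ell-2}{2(\ell-1)}\, {\rm Res}_{w=a}\left(\frac{w^{m-1}}{w(w-w^{-1})^{\ell-1}}\right).
\end{equation*}
Thus each application of Proposition \ref{P:Resa} decreases both the exponent in the numerator and the exponent in the denominator by one, while accumulating a scalar factor.

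Next, I would iterate this recursion a total of $\ell-1$ times, until the denominator exponent is reduced to one. This produces
\begin{equation*}
{\rm Res}_{w=a}\left(\frac{w^m}{w(w-w^{-1})^{\ell}}\right) = \left(\prod_{j=0}^{\ell-2}\frac{m+\ell-2-2j}{2(\ell-1-j)}\right) {\rm Res}_{w=a}\left(\frac{w^{m-\ell+1}}{w(w-w^{-1})}\right).
\end{equation*}
The final step is to identify the accumulated product with the stated Pochhammer factor. Reindexing via $j \mapsto \ell-2-j$ rewrites the numerator of the product as $\prod_{j=0}^{\ell-2}(m-\ell+2+2j)$, and extracting a factor of $\frac{1}{2}$ from each term shows that
\begin{equation*}
\prod_{j=0}^{\ell-2}\frac{m-\ell+2+2j}{2(\ell-1-j)} = \frac{1}{(\ell-1)!}\prod_{j=0}^{\ell-2}\left(\frac{m-\ell}{2}+1+j\right) = \frac{\left(\frac{m-\ell}{2}+1\right)_{\ell-1}}{(\ell-1)!},
\end{equation*}
which is precisely the prefactor asserted in the corollary.

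The main conceptual obstacle is simply keeping careful track of how the $w$-exponents in the numerator and denominator interact when $\frac{d}{dw}$ is applied in Proposition \ref{P:Resa}; the cancellation of $w^{\ell-1}$ from the denominator with the exponent $m+\ell-3$ coming from the derivative is the key simplification that produces a clean one-parameter recursion. Everything else is bookkeeping, namely performing the reindexing needed to reveal the Pochhammer symbol.
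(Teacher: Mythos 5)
Your proof is correct and follows essentially the same route as the paper: a single application of Proposition \ref{P:Resa} with $h(w)=w^m$ and parameter $\ell-1$ yields exactly the one-step recursion the authors record, which is then iterated $\ell-1$ times and the accumulated product identified with $\left(\frac{m-\ell}{2}+1\right)_{\ell-1}/(\ell-1)!$. The only cosmetic difference is that you reindex the product in ascending order, whereas the paper reads off the Pochhammer factors in descending order via the rewriting $\frac{m+\ell-2}{2}=\frac{m-\ell}{2}+\ell-1$.
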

\begin{proof}
Proposition \ref{P:Resa} with $f(w)=w^m$ and $\ell\mapsto \ell-1$ implies that
\begin{align}\label{reduceres}
{\rm Res}_{w = a} \left(\frac{w^m}{w\left(w - w^{-1}\right)^\ell}\right)
= \frac{m+\ell-2}{2(\ell-1)} \, {\rm Res}_{w=a} \left(\frac{w^{m-1}}{w(w-w^{-1})^{\ell-1}}\right).
\end{align}
Writing $\frac{m + \ell - 2}{2} = \frac{m-\ell}{2} + \ell-1$, and then iterating \eqref{reduceres} $\ell-2$ more times gives the result.
\end{proof}

\subsection{Unimodular matrices}
Here we restrict to the case of unimodular matrices.

\begin{proposition} Let $M$ be a unimodular positive matrix, then $\widehat{Z}_{\bb  \delta}(q^2)=Z(q)$.
\end{proposition}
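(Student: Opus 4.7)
The plan is to decompose $\Theta_M(q;\b w)$ according to the residue of the summation index modulo $2$ and show that only the parity class matching $\b\delta$ contributes to the contour integral, reproducing $\Theta_{-M,\b\delta}(q^2;\b w)$ exactly. Replacing $q$ by $q^2$ in $\widehat{Z}_{\b\delta}(q)$ turns the prefactor $q^{(-3N+\mathrm{tr}(M))/4}$ into the $Z(q)$ prefactor $q^{(-3N+\mathrm{tr}(M))/2}$ and converts the $q$-exponent in the theta function into $q^{\frac12\b\ell^T M^{-1}\b\ell}$, matching the exponent appearing in $\Theta_M$. Since $M$ is unimodular, $M\mathbb{Z}^N=\mathbb{Z}^N$ and $2M\mathbb{Z}^N=2\mathbb{Z}^N$, so I may rewrite
\[
\Theta_M(q;\b w)=\sum_{\b m\in\mathbb{Z}^N} q^{\frac12\b m^T M^{-1}\b m}\prod_{j=1}^N w_j^{m_j}=\sum_{\b\epsilon\in\{0,1\}^N}\Theta^{(\b\epsilon)}(q;\b w),
\]
where $\Theta^{(\b\epsilon)}(q;\b w):=\sum_{\b m\in 2\mathbb{Z}^N+\b\epsilon} q^{\frac12\b m^T M^{-1}\b m}\prod_{j=1}^N w_j^{m_j}$; in particular $\Theta^{(\b\delta)}$ coincides with $\Theta_{-M,\b\delta}(q^2;\b w)$.

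It therefore suffices to show that, for every parity vector $\b\epsilon$ with $\b\epsilon\not\equiv\b\delta\pmod 2$, the integral of $\prod_j g(w_j)\prod_{(k,\ell)\in E}f(w_k,w_\ell)\,\Theta^{(\b\epsilon)}(q;\b w)$ against $\prod_j dw_j/w_j$ vanishes. Using the elementary incidence identity
\[
\prod_{j=1}^N g(w_j)\prod_{(k,\ell)\in E}f(w_k,w_\ell)=\prod_{j=1}^N(w_j-w_j^{-1})^{2-\delta_j},
\]
I would pick an index $j$ with $\epsilon_j\not\equiv\delta_j\pmod 2$ and perform the change of variable $w_j\mapsto -w_j$. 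Under this substitution the factor $(w_j-w_j^{-1})^{2-\delta_j}$ acquires the sign $(-1)^{2-\delta_j}=(-1)^{\delta_j}$, each monomial in $\Theta^{(\b\epsilon)}$ acquires $(-1)^{m_j}=(-1)^{\epsilon_j}$, and the measure $dw_j/w_j$ together with the contour $|w_j|=1$ is preserved. Since $\epsilon_j+\delta_j$ is odd, the integrand acquires a net factor of $-1$, forcing the integral to equal its own negative and hence to vanish.

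The only point requiring genuine care is that the principal value prescription be itself invariant under $w_j\mapsto -w_j$. This is not immediate when $\delta_j\geq 3$ and the integrand carries higher-order singularities at $w_j=\pm 1$, but it follows from the observation that the two singular points are exchanged by the substitution while the PV excludes symmetric neighborhoods of both — whatever formal interpretation of the PV makes $\widehat{Z}_{\b\delta}(q)$ well-defined in \eqref{E:widehatZ} is, by construction, symmetric under $w_j\mapsto -w_j$ and hence compatible with the sign argument above. Combining these observations yields $Z(q)=\widehat{Z}_{\b\delta}(q^2)$, as claimed.
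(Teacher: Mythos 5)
Your proof is correct, but it takes a genuinely different route from the paper's. The paper argues vertex by vertex: after reducing the integrand at vertex $j$ to $(w_j-w_j^{-1})^{2-\deg(j)}$, it invokes the explicit constant-term and residue evaluations (\eqref{E:CTw-w^-1}, \eqref{E:CT1/w-w^-1}, and Corollary \ref{C:CTn}) to check, separately for leaves, degree-two vertices, and vertices of odd and even degree $\geq 3$, that only exponents $m_j$ of the parity $\delta_j$ survive the principal value integral in $w_j$. Your argument replaces all of these case distinctions by a single symmetry: for a parity class $\b\epsilon\neq\b\delta$ you substitute $w_j\mapsto -w_j$ at one discrepant index and read off a net sign $(-1)^{\epsilon_j+\deg(j)}=-1$, so the truncated contour integrals (and hence their PV limits) vanish identically. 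This is cleaner and more uniform, and it correctly isolates the one genuine subtlety --- compatibility of the PV prescription with $w_j\mapsto-w_j$ --- which holds because the substitution exchanges the two excluded arcs around $w_j=\pm1$ of equal angular radius, mapping each truncated contour to itself. What the paper's computational approach buys in exchange is reusable machinery: the same constant-term identities are needed anyway in Sections 4--6 to actually evaluate $Z(q)$, whereas your symmetry argument only establishes the vanishing. One cosmetic point: in the incidence identity the exponent $2-\delta_j$ should read $2-\deg(v_j)$ (the paper's $\delta_j$ is only the degree reduced mod $2$), though this does not affect the sign $(-1)^{\delta_j}$ you extract.
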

\begin{proof}  Recall that in the definition of $\widehat{Z}_{\bb \delta}(q^2)$ the summation in the theta function is over
$2 \mathbb{Z}^N+ \bb{\delta}$, whereas in the definition of $Z(q)$ we are summing over the full lattice $\mathbb{Z}^N$.

If vertex $j$ is a leaf (of degree one), then $\delta_j=1$, so the summation in $\widehat{Z}_{\bb \delta}$ is over odd integers. But according to \eqref{E:CT1/w-w^-1}, the only non-zero contributions in $Z(q)$ also come from odd powers.

For vertices of degree two we have $\delta_j=0$, and thus there is no rational function in $w_j$, so the only contribution comes from the constant term - the zeroth (which is even) power of the variable.

For vertices of odd degree $r \geq 3$, $\delta_j=1$, and from Corollary \ref{C:CTn} we obtain for $m$ even
\begin{equation*}
\frac{1}{2\pi i}\text{PV}\int_{|w_j|=1}\frac{w_j^m}{\left( w_j-w_j^{-1}\right)^{r-2}}\frac{dw_j}{w_j}=0.
\end{equation*}

Finally, for vertices of even degree $r \geq 4$, $\delta_j=0$, similarly we have, for $m$ odd
\begin{equation*}
\frac{1}{2\pi i}\text{PV}\int_{|w_j|=1}\frac{w_j^m}{\left( w_j-w_j^{-1}\right) ^{r-2}}\frac{dw_j}{w_j}=0.
\end{equation*}
The statement of the proposition follows.
\end{proof}

\subsection{An example with a non-unimodular matrix}

\medskip

In this section we calculate an example of $\widehat{Z}_{0}(q)$ for a non-unimodular matrix. Although this computation can be carried out with minimal machinery (and was previously computed as the $k=1$ case of $\widehat{Z}_{0}(q)$ in \cite[(6.81)]{CCFGH}), it is easiest to carry out here if we make use of some of our general results. We consider the Dynkin diagram for $D_4$ (see Figure \ref{F:DN+2}), and its Cartan matrix
$$
M = \left(\begin{smallmatrix}
2 & -1 & 0  & 0 \\
-1 & 2 & -1 & -2 \\
0& -1 & 2 & 0\\
0& -1 & 0& 2
\end{smallmatrix}\right).
$$
Noting that
\begin{align*}
2M\Z^4 + (1,1,1,1)= M \left(2 \Z + 1\right)^4,
\end{align*}
we can therefore write $\bb{\ell} = M(2\bb{n}+\bb{1})$, where $\bb{1}:=(1,1,1,1)$. We then calculate
\begin{align*}
 \widehat{Z}_{\b 1}(q)&= \frac{q^{-1}}{(2\pi i)^4} {\rm PV} \int_{|w_j| = 1} \frac{\prod_{k\in\{1,3,4\}}\left(w_k-w_k^{-1}\right)}{w_2 - w_2^{-1}}\sum_{\bb\ell\in M(2\Z+1)^4} q^{\frac14 \bb{\ell}^T M^{-1} \bb{\ell}} \prod_{k=1}^4 w_k^{\ell_k} \prod_{1\leq j\leq 4} \frac{dw_j}{w_j} \\
& = \frac{q^{-1}}{(2\pi i)^4} {\rm PV} \int\limits_{|w_j| = 1} \frac{\prod_{k\in\{1,3,4\}}\left(w_k-w_k^{-1}\right)}{w_2 - w_2^{-1}}
\sum_{\bb{n}\in\Z^4} q^{\frac14(2\bb{n}+\bb{1})^TM(2\bb{n}+\bb{1})} \\
& \hspace{3.4cm} \times w_1^{4n_1-2n_2+1} w_2^{4n_2-2n_1-2n_3-2n_4-1}w_3^{4n_3-2n_2+1} w_4^{4n_4-2n_2+1} \prod_{1\leq j\leq4} \frac{dw_j}{w_j}.
\end{align*}
Using \eqref{E:CTw-w^-1}, the integrals on $w_1, w_3, w_4$ force the exponents to be $\pm 1$. Furthermore, if the exponent on $w_1$ is 1, then $n_2 = 2n_1$. In particular, $n_2$ is even, so the exponent of $w_3$ is then $4n_3 - 2n_2 + 1 \equiv 1 \pmod{4}$, and thus it cannot be $w_3^{-1}$; similarly for $w_4$. This shows that the only possibilities are $w_1 w_3 w_4$ or $w_1^{-1} w_3^{-1} w_4^{-1}$. In the first case the remaining sum is parametrized by $n_2 = 2n$, and $n_1 = n_3 = n_4 = n$, and in the second case by $n_2 = 2n+1$, again with $n_1 = n_3 = n_4 = n$. A short residue calculation (as in Section \ref{S:Prelim:ratl}) then gives
\begin{align*}
\widehat{Z}_{\b 1}(q) &= \frac{q^{-1}}{2}  \Bigg(-\sum_{n \in \Z} \sgn(2n-1) q^{\frac12 \left(3(2n+1)^2 + (4n+1)^2 - 3(4n+1)(2n+1)\right)} \\
& \hspace{4.4cm}  + \sum_{n \in \Z}\sgn(2n+3) q^{\frac12 \left(3(2n+1)^2 + (4n+3)^2 - 3(4n+3)(2n+1)\right)}\Bigg) \\ \notag
& =\frac{q^{-1}}{2} \left(-\sum_{n \in \Z}\sgn^\ast(n) q^{\frac12 \left(4n^2 + 2n + 1\right)} +\sum_{n \in \Z}\sgn^\ast(n+1) q^{\frac12 \left(4n^2 + 6n + 3\right)} \right) \\ \notag
&  =  \sum_{n \in \Z} \sgn^*(n) q^{\frac12 \left(4n^2 -2n - 1\right)}.
\end{align*}
For the second equality we change $n\mapsto -n$ in the first sum and $n\mapsto n-1$ in the second sum, and we also use the notation $\sgn^*(x):=\sgn(x)$ for $x\in\mathbb{R}\setminus\{0\}$, and $\sgn^*(0):=1$. As a point of comparison, this is different from $Z_{D_4}(q)$ in Proposition \ref{P:DN+2}.

\subsection{Quantum modular forms}
\label{S:Prelim:Quantum}

Define the following false theta functions ($j,p \in \Z$)
\begin{equation}
\label{E:Fjp}
	F_{j,p}(\tau):=\sum_{m\in\mathbb Z}\\\sgn^*(m)q^{\left(m+\frac{j}{2p}\right)^2}.
\end{equation}
This definition may easily be extended to rational $j$ and $p$; indeed, if $j = \frac{r}{s}$ and $p = \frac{h}{k}$, then $F_{j,p} = F_{rk, hs}$. We also note the identities
\begin{align}\label{rel1}
F_{-1,p}(2p\tau) &= 2q^{\frac{1}{2p}} -\sum_{n\equiv 1\pmod{2p}} \sgn(n) q^{\frac{n^2}{2p}} ,\\ \label{rel2}
F_{a,p}(2p\tau) &=  -\sum_{n\equiv 2p-a \pmod{2p}} \sgn(n) q^{\frac{n^2}{2p}}\qquad\quad (0<a<2p).
\end{align}
The second of these follows by substituting $n = 2pm + a$ in \eqref{E:Fjp}, whereas the first requires setting $n = 1 - 2pm$, and then separating the $n=1$ term.

In \cite{Za2} Zagier defined a {\em quantum modular form} to be a function $f:\mathcal{Q}\rightarrow\C$ $(\mathcal{Q}\subseteq \Q)$, such that the {\it obstruction to modularity} ($M = (\begin{smallmatrix}
a&b\\c&d
\end{smallmatrix})\in\SL_2(\Z)$)
\begin{equation}\label{errormod}
f(\tau)-(c\tau+d)^{-k} f(M\tau)
\end{equation}
has ``nice'' analytic properties. The definition is intentionally vague to include many examples; in this paper we require \eqref{errormod} to be real-analytic. This assumption is particularly useful because it guarantees that quantum modularity is preserved under differentiation, which is needed in the proof of Theorem \ref{T:main} \ref{T:main:nleg} (see Section \ref{genstar}).

It was shown in Section 4 of \cite{BM} that the $F_{j,p}$ are quantum modular forms. In order to describe this precisely, we first recall that
$$\Gamma_1(n):= \biggl\{ \begin{pmatrix} a & b \\ c &d \end{pmatrix}  \in \SL_2(\mathbb{Z}) : a,d \equiv 1 \ ({\rm mod} \ n), c \equiv 0 \ ({\rm mod} \ n) \biggr\}.$$
\begin{proposition}
\label{quantum-12}
The functions $F_{j,p}$ are quantum modular forms of weight $\frac12$ on $\Gamma_1(4p)$ (with explicit multiplier systems) and quantum set $\mathbb Q$.
\end{proposition}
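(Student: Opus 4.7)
The plan is to realize each $F_{j,p}$ as a holomorphic Eichler-type integral of a weight-$\tfrac32$ unary theta series, and then to exploit the modular transformation of that theta series to control the cocycle $F_{j,p}(\tau) - (c\tau+d)^{-1/2}F_{j,p}(\gamma\tau)$ for $\gamma = \bigl(\begin{smallmatrix} a & b \\ c & d \end{smallmatrix}\bigr) \in \Gamma_1(4p)$. First I would introduce the auxiliary unary theta function
\[
\theta_{j,p}(\tau) := \sum_{m \in \Z} \left(m + \tfrac{j}{2p}\right) q^{\left(m + \frac{j}{2p}\right)^2}
\]
and verify, via the classical transformation theory of one-variable theta series twisted by a linear character modulo $4p$, that $\theta_{j,p}$ is a holomorphic modular form of weight $\tfrac32$ on $\Gamma_1(4p)$ with an explicit multiplier system $\nu$.

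Second, I would establish the integral representation
\[
F_{j,p}(\tau) = C \int_\tau^{i\infty} \frac{\theta_{j,p}(z)}{\sqrt{z - \tau}}\, dz
\]
for an explicit constant $C$, by comparing integrands term by term after the substitution $z = \tau + it$ and applying $\int_0^\infty t^{-1/2} e^{-2\pi u t}\, dt = \sqrt{\pi/(2u)}$ for $u > 0$; a small bounded correction at $m = 0$ accommodates the convention $\sgn^*(0) = 1$ (which matches $\sgn(m + j/(2p))$ except possibly for the $m = 0$ summand). The change of variables $z \mapsto \gamma z$ inside the integral representation of $F_{j,p}(\gamma\tau)$, combined with $\theta_{j,p}(\gamma z) = \nu(\gamma)(cz + d)^{3/2}\theta_{j,p}(z)$, then collapses the obstruction to modularity into the finite integral
\[
F_{j,p}(\tau) - \nu(\gamma)^{-1}(c\tau + d)^{-1/2} F_{j,p}(\gamma\tau) = C\int_\tau^{-d/c} \frac{\theta_{j,p}(z)}{\sqrt{z - \tau}}\, dz,
\]
which, after deforming the contour slightly off the real axis and invoking the rapid decay of $\theta_{j,p}$ near the cusps, extends real-analytically to all $\tau \in \R \setminus \{-d/c\}$.

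Finally, for $\tau = h/k \in \Q$ I would define $F_{j,p}(h/k)$ as the radial limit from $\mathbb{H}$; expanding the partial theta series by collecting residue classes modulo $2pk$ and applying standard Gauss-sum manipulations yields a finite closed-form expression, which shows that the limit exists at every rational number. Combined with the real-analytic cocycle above, this lets us take the quantum set to be all of $\Q$. The main technical obstacle is the explicit bookkeeping of the multiplier system $\nu(\gamma)$ together with the small adjustment forced by the $\sgn^*(0) = 1$ convention; both issues are handled in detail in Section~4 of \cite{BM}, so the proof essentially amounts to invoking that analysis.
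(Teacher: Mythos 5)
The paper offers no proof of this proposition beyond citing Section~4 of \cite{BM}, and your Eichler-integral argument is precisely a reconstruction of that reference (realize $F_{j,p}$ as $C\int_\tau^{i\infty}\theta_{j,p}(z)(z-\tau)^{-1/2}\,dz$ for the weight-$\frac32$ theta $\theta_{j,p}$, transform, take radial limits), so the approach is essentially the same. One bookkeeping correction: after the substitution $z\mapsto\gamma z$ one finds $\nu(\gamma)^{-1}(c\tau+d)^{-1/2}F_{j,p}(\gamma\tau)=C\int_\tau^{-d/c}\theta_{j,p}(z)(z-\tau)^{-1/2}\,dz$, so the obstruction to modularity is the complementary period integral $C\int_{-d/c}^{i\infty}\theta_{j,p}(z)(z-\tau)^{-1/2}\,dz$ rather than $C\int_\tau^{-d/c}$; this is the correct piece to analyze, since along the vertical path from $-d/c$ to $i\infty$ the cuspidal decay of $\theta_{j,p}$ gives absolute convergence and real-analyticity in $\tau$ on $\mathbb{R}\setminus\{-d/c\}$ without any contour deformation near the real axis (where $\theta_{j,p}$ has no continuation away from cusps).
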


The asymptotic expansion of $F_{j,p}(it)$ as $t\to 0^+$ is given by Corollary 4.5 of \cite{BM}, as
\begin{equation}
\label{E:Fasymp}
F_{j,p}(it) \sim -2 \sum_{r\geq 0 } \frac{B_{2r+1}\left(\frac{j}{2p}\right)}{2r+1}\frac{(-2\pi t)^r}{r!},
\end{equation}
where $B_\ell(x)$ denotes the $\ell$-th Bernoulli polynomial. In particular, $B_1(x)=x-\frac12$ and $B_3(x)=x^3-\frac{3x^2}{2}+\frac{x}{2}.$

We also require certain weight $\frac32$ quantum modular forms. Set
\begin{equation*}
G_{j,p}(\tau) := \sum_{m\in\Z} \left|m+\tfrac{j}{2p}\right| q^{\left(m+\frac{j}{2p}\right)^2}.
\end{equation*}
The quantum modularity properties of these functions were given in Section 6.1 of \cite{BCR} (up to finitely many terms $G_{j,p}(\tau)$ is $\Theta_{\frac32}^+ (2p,j;\tau)$ from \cite{BCR}). To state this result, define
$$\widehat{\mathcal{Q}}_{2p,j} := \begin{cases}
\left\{ \frac{h}{k} \in \mathbb{Q}: {\rm gcd}(h, k)=1,  \  p \ | \ k, {\rm ord}_2(k)={\rm ord}_2(p) \right\} &   {\rm if} \ p\nmid j, \\[2pt]
\left\{ \frac{h}{k} \in \mathbb{Q}: {\rm gcd}(h, k)=1, \   {\rm ord}_2(k)>{\rm ord}_2(p)+1  \right\} &   {\rm if} \ j \equiv p  \pmod{2p}, \\[2pt]
\left\{ \frac{h}{k} \in \mathbb{Q}: {\rm gcd}(h, k)=1, \ {\rm ord}_2(k)={\rm ord}_2(p)+1 \right\} &   {\rm if} \ j \equiv 0 \pmod{2p},
\end{cases}   $$
where for $k \in \mathbb{Z}$, ${\rm ord}_2(k) = \nu$ if $2^\nu$ is the largest power of $2$ dividing $k$.

\begin{proposition} \label{quantum-32}
The functions $\tau\mapsto G_{j,p}(p\tau)$ are quantum modular forms of weight $\frac32$ on $\Gamma_1(4p)$ (and explicit multiplier systems) with quantum set $\widehat{\mathcal{Q}}_{2p,j}$.

\end{proposition}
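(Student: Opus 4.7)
The plan is to identify $G_{j,p}(p\tau)$ up to a finite correction with the weight-$\frac{3}{2}$ partial theta series $\Theta_{3/2}^+(2p,j;\tau)$ whose quantum modularity is established in Section 6.1 of \cite{BCR}, and then transport the conclusions. This is natural because the hint in the proposition statement already tells us the two functions agree up to finitely many terms; the real content is to make this identification precise and verify it preserves the parameters $\Gamma_1(4p)$, weight $\tfrac{3}{2}$, and quantum set $\widehat{\mathcal{Q}}_{2p,j}$.

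First I would rewrite the defining sum of $G_{j,p}$ as one over a single residue class modulo $2p$. Substituting $n = 2pm + j$ and using $p(m + \tfrac{j}{2p})^2 = \tfrac{n^2}{4p}$ together with $|m + \tfrac{j}{2p}| = \tfrac{|n|}{2p}$ yields
\begin{equation*}
G_{j,p}(p\tau) \;=\; \frac{1}{2p}\sum_{n \equiv j \,(\mathrm{mod}\, 2p)} |n|\, q^{\frac{n^{2}}{4p}}.
\end{equation*}
This is precisely the standard presentation of a weight-$\frac{3}{2}$ partial theta function indexed by the residue class $j \bmod 2p$. Up to the overall scalar $\tfrac{1}{2p}$ and at most finitely many boundary terms arising from differing sign conventions (in \cite{BCR} one typically writes $\sgn^*(n)\, n$ in place of $|n|$, and these agree except at $n=0$ or, when $j\equiv 0 \pmod{2p}$, at $n=\pm j$), the resulting series coincides with $\Theta_{3/2}^+(2p, j; \tau)$. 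Since the discrepancy is a finite $q$-series, it is real-analytic on $\mathbb{H}$ and transforms with a real-analytic cocycle under $\SL_2(\Z)$, so it does not affect quantum modularity.

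I would then invoke the main result of Section 6.1 of \cite{BCR} directly: $\Theta_{3/2}^+(2p, j; \tau)$ is a quantum modular form of weight $\frac{3}{2}$ on $\Gamma_1(4p)$, with an explicit multiplier system and quantum set $\widehat{\mathcal{Q}}_{2p,j}$. The trichotomy in the definition of $\widehat{\mathcal{Q}}_{2p,j}$ reflects exactly the non-vanishing conditions on the associated Gauss sums at a rational point $h/k$, depending on whether $p \nmid j$, $j \equiv p \pmod{2p}$, or $j \equiv 0 \pmod{2p}$. Passing back through the scalar $\tfrac{1}{2p}$ and absorbing the finite correction into the modularity defect yields the claim.

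The main obstacle will be bookkeeping: matching the normalizations, signs, and boundary adjustments (at $n = 0$ and $n = \pm j$) between the definition of $G_{j,p}(p\tau)$ used here and that of $\Theta_{3/2}^+(2p, j;\tau)$ in \cite{BCR}, and verifying that the congruence trichotomy defining $\widehat{\mathcal{Q}}_{2p,j}$ is preserved under the identification. All the substantive analytic work — writing the modularity obstruction as a real-analytic function on $\mathbb{Q}$ via Mordell-type integrals and computing the Gauss sums that determine the quantum set — is already carried out in \cite{BCR} and need not be reproduced.
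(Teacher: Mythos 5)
Your proposal is correct and follows essentially the same route as the paper, which likewise proves the proposition by noting that (up to finitely many terms and normalization) $G_{j,p}$ coincides with $\Theta_{3/2}^+(2p,j;\cdot)$ and then citing Section 6.1 of \cite{BCR} for the weight, group, multiplier, and quantum set. Your explicit substitution $n=2pm+j$ and the remark that the finite discrepancy is absorbed into the real-analytic modularity defect just spell out the identification the paper states in passing.
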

\begin{remark}
One inconvenience in working with weight $\frac32$ quantum modular forms $G_{j,p}$ is that for different values of $j$ and fixed $N$ the quantum sets can be disjoint.
\end{remark}

The next result follows directly from the definitions.
\begin{proposition} \label{quantum_set} Let $\tau\mapsto H(\tau)$ be a quantum modular form of weight $\frac12$ or $\frac32$ with respect to a subgroup of $\SL_{2}(\Z)$ with a quantum set $S \subset \mathbb{Q}$, and let $a \in \mathbb{Q}^+$. Then $\tau\mapsto H(a \tau)$ is also quantum modular  on the set  $\frac1a \cdot S$ (with respect to a subgroup of $\SL_2(\Z)$).
\end{proposition}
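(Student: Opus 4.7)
The plan is to realize the scaling $\tau \mapsto a\tau$ as conjugation by a rational diagonal matrix, and then pull the quantum-modular transformation law of $H$ back through that conjugation. Writing $a = p/q$ with $p, q \in \N$ coprime, I would set $\tilde A := \begin{pmatrix} p & 0 \\ 0 & q \end{pmatrix}$, which acts on $\mathbb{H}$ by the M\"obius transformation $\tilde A \tau = a\tau$. If $\Gamma \subseteq \SL_2(\Z)$ is the subgroup on which $H$ transforms, the natural candidate group for $\tau \mapsto H(a\tau)$ is
\begin{align*}
\Gamma' := \bigl\{ \gamma' \in \SL_2(\Z) : \tilde A \gamma' \tilde A^{-1} \in \Gamma \bigr\}.
\end{align*}

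First I would carry out the explicit matrix computation
\begin{align*}
\tilde A \begin{pmatrix} a' & b' \\ c' & d' \end{pmatrix} \tilde A^{-1} = \begin{pmatrix} a' & p b'/q \\ q c'/p & d' \end{pmatrix},
\end{align*}
which shows that $\Gamma'$ consists of the $\gamma' \in \SL_2(\Z)$ satisfying $q \mid b'$, $p \mid c'$, together with the condition that $\tilde A \gamma' \tilde A^{-1} \in \Gamma$; this is a finite-index subgroup of $\SL_2(\Z)$. For $\gamma' \in \Gamma'$ and $\gamma := \tilde A \gamma' \tilde A^{-1} \in \Gamma$, I would then verify the key M\"obius-transformation identity $a \cdot (\gamma' \tau) = \gamma \cdot (a\tau)$ along with the matching of the automorphy factor
\begin{align*}
c'\tau + d' = \tfrac{q c'}{p}(a\tau) + d' = c_\gamma (a\tau) + d_\gamma,
\end{align*}
where $c_\gamma, d_\gamma$ denote the bottom row of $\gamma$.

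Setting $\sigma := a\tau$, the two identities above combine to give
\begin{align*}
H(a\tau) - (c'\tau + d')^{-k} H(a \gamma' \tau) = H(\sigma) - (c_\gamma \sigma + d_\gamma)^{-k} H(\gamma \sigma),
\end{align*}
so that the modularity obstruction of $H(a\tau)$ with respect to $\gamma' \in \Gamma'$ coincides with the modularity obstruction of $H(\sigma)$ with respect to $\gamma \in \Gamma$, evaluated at $\sigma = a\tau$. By the quantum modularity of $H$, the right-hand side extends real-analytically in $\sigma$ for $\sigma \in S$, and hence in $\tau$ for $\tau \in \frac{1}{a} \cdot S$, giving the claim. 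There is no substantive obstacle: the whole argument is bookkeeping of the conjugation $\gamma' \mapsto \tilde A \gamma' \tilde A^{-1}$, and the fact that $\Gamma'$ has finite index in $\SL_2(\Z)$ is harmless, since the definition of a quantum modular form only requires transformation under \emph{some} subgroup of $\SL_2(\Z)$.
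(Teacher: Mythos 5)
Your argument is correct and is precisely the conjugation-by-$\left(\begin{smallmatrix} p & 0 \\ 0 & q\end{smallmatrix}\right)$ bookkeeping that the paper has in mind when it states that the proposition ``follows directly from the definitions'' (the paper gives no further proof). Your writeup simply makes that implicit argument explicit, and all the computations check out.
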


\section{$3$-leg star graphs and the proof of Theorem \ref{T:main} \ref{T:main:3leg}}

An {\it $(\ell-1)$-leg star graph} consists of $\ell-1$ legs joined to a central vertex. We enumerate the nodes as indicated in Figure \ref{F:ellstar}, with the vertex of degree $\ell-1$ labeled by $\ell$, and the external nodes (of degree 1) by $1,...,\ell-1$. Furthermore, an {\it $(\ell-1)$-star graph} is such a graph in which none of the legs have any interior nodes; in other words, this is a tree with $\ell$ vertices, where one central vertex is connected to $(\ell-1)$ leaves.

\begin{figure}[h]
\begin{center}

	\begin{tikzpicture}[thick, scale=1.2]
	\node[shape=circle,fill=black, scale = 0.4] (1) at (-0.3,0) { };
	\node[shape=circle,fill=black, scale = 0.4] (2) at (0.5,1) { };
	 \node[shape=circle,fill=white, scale = 0.1] (3) at (1.5,1) { };
	\node[shape=circle,fill=black, scale = 0.4] (4) at (2.5,1) { };
	   \node[shape=circle,fill=black, scale = 0.4] (5) at (3.5,1) { };

\node[shape=circle,fill=black, scale = 0.4] (6) at (0.5,0.5) { };
\node[shape=circle,fill=white, scale = 0.1] (13) at (1.5,0.5) { };
\node[shape=circle,fill=black, scale = 0.4] (14) at (2.5,0.5) { };
\node[shape=circle,fill=black, scale = 0.4] (15) at (3.5,0.5) { };

\node[shape=circle,fill=white, scale = 0.1] (7) at (0.5,0) { };
\node[shape=circle,fill=white, scale = 0.1] (8) at (0.5,-0.5) { };

	\node[shape=circle,fill=black, scale = 0.4] (9) at (0.5,-1) { };
	 \node[shape=circle,fill=white, scale = 0.1] (10) at (1.5,-1) { };
	\node[shape=circle,fill=black, scale = 0.4] (11) at (2.5,-1) { };
	   \node[shape=circle,fill=black, scale = 0.4] (12) at (3.5,-1) { };

	\node[draw=none] (A1) at (-0.4,-0.3) {$a_{\ell}$};
	\node[draw=none] (A2) at (3.7,1.3) {$a_{1}$};
	\node[draw=none] (A3) at (3.7,0.7) {$a_{2}$};
	\node[draw=none] (A4) at (3.8,-0.7) {$a_{\ell-1}$};

	\path [-](1) edge node[left] {} (2);
	
	\path [dotted] (2) edge node[left] {} (3);
	\path [dotted] (3) edge node[left] {} (4);
	\path [-] (4) edge node[left] {} (5);
	
	\path [-] (1) edge node[left] {} (6);
	\path [dotted] (1) edge node[left] {} (7);
	\path [dotted] (1) edge node[left] {} (8);

\path [-] (1) edge node[left] {} (9);
\path [-] (1) edge node[left] {} (6);
\path [dotted] (6) edge node[left] {} (13);
\path [dotted] (13) edge node[left] {} (14);
\path [-] (14) edge node[left] {} (15);

\path [dotted] (9) edge node[left] {} (10);
\path [dotted] (10) edge node[left] {} (11);
\path [-] (11) edge node[left] {} (12);

	\end{tikzpicture}
		
\end{center}
\caption{An $(\ell-1)$-leg star graph.} \label{F:ellstar}
\end{figure}

In this section we consider $3$-leg star graphs, and prove Theorem \ref{T:main} \ref{T:main:3leg}.

\subsection{Singularities}
\label{S:Sing}

We consider the singularities of the integrand in \eqref{Our:ZqDef} in the case of $3$-leg star graphs, and show that the principal value integral is not needed.
We have
\begin{align*}
Z(q)=\frac{q^{\frac{-3N + \sum_{\nu = 1}^N a_\nu}{2}}}{(2\pi i)^N}\text{PV}\int_{|w_4|=1}\frac{1}{w_4-w_4^{-1}}h(w_4)\frac{dw_4}{w_4},
\end{align*}
where
\begin{equation*}
h(w_4):=\int_{|w_k|=1}\Theta_M(\b w)\prod_{r=1}^{3}\left(w_r-w_r^{-1}\right)\prod_{\substack{k = 1 \\ k\neq 4}}^N\frac{dw_k}{w_k}.
\end{equation*}

\begin{proposition}
\label{P:Z3star}
For $3$-leg star graphs, we have
\begin{equation}
\label{E:Zq=CT}
Z(q) =q^{\frac{-3N + \sum_{\nu = 1}^N a_\nu}{2}} \textnormal{CT}_{\b w}\left(\frac{	 \Theta_M(\b{w})\prod_{r\in\{1,2,3\}}\left(w_r-w_r^{-1}\right)}{w_4-w_4^{-1}}\right).
\end{equation}
\end{proposition}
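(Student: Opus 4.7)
The plan is to combine the rational factors at each vertex to identify the singular structure of the integrand, and then to reduce the principal value integration over the central variable $w_4$ to a constant-term extraction via Lemma \ref{L:removable}. At a vertex $v_j$ of degree $\delta_j$, combining $g(w_j)=(w_j-w_j^{-1})^2$ with one factor of $(w_j-w_j^{-1})^{-1}$ contributed by each of the $\delta_j$ incident edges leaves $(w_j-w_j^{-1})^{2-\delta_j}$. For a $3$-leg star graph the leaves $v_1,v_2,v_3$ have $\delta=1$ and contribute $w_r-w_r^{-1}$, the interior degree-two nodes contribute a trivial factor, and the central vertex $v_4$ has $\delta=3$ and contributes $1/(w_4-w_4^{-1})$. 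Hence the integrand takes exactly the shape displayed in \eqref{E:Zq=CT}, and its only singularities on the unit torus are the simple poles at $w_4=\pm1$.

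Since for each $k\neq 4$ the integrand is regular on $|w_k|=1$, the corresponding contour integration equals $\operatorname{CT}_{w_k}$. Using Fubini termwise in $q$, I would carry out all these integrations first and collect the result into a single Laurent series $h(w_4)$, rewriting
\[
Z(q)= \frac{q^{\frac{-3N+\sum_{\nu=1}^{N} a_\nu}{2}}}{2\pi i}\,\text{PV}\int_{|w_4|=1}\frac{h(w_4)}{w_4-w_4^{-1}}\frac{dw_4}{w_4}.
\]

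Next I would verify the reflection identity $h(w_4^{-1})=-h(w_4)$, which is the hypothesis of Lemma \ref{L:removable} with $m=0$. Applying the change of variables $w_k\mapsto w_k^{-1}$ for each $k\neq 4$ preserves the contour $|w_k|=1$ (with orientation) and leaves $\frac{dw_k}{w_k}$ invariant; moreover $\Theta_M(\b w)$ is unchanged because the summation lattice $M\Z^N$ is closed under negation and the quadratic form $\b m^T M^{-1}\b m$ is even in $\b m$. The only sign contribution comes from the three leaf factors $w_r-w_r^{-1}$, each of which picks up a minus sign, giving the net factor $(-1)^3=-1$.

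Finally, Lemma \ref{L:removable} converts the $w_4$ principal value integral into $\operatorname{CT}_{w_4}(h(w_4)/(w_4-w_4^{-1}))$, which together with the constant-term extractions already performed for $k\neq 4$ assembles into the full multivariable constant term asserted in \eqref{E:Zq=CT}. The main obstacle is really just the sign bookkeeping in the previous step: one must balance the cubic sign from the leaf factors against the invariance of the theta series under the coordinate inversions, and the assumption that the central vertex has exactly three legs is precisely what makes this count an odd power and supplies the needed minus sign.
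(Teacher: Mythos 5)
Your proposal is correct and follows essentially the same route as the paper: reduce to a one-variable principal value integral in $w_4$, establish the odd symmetry $h(w_4^{-1})=-h(w_4)$ by combining the invariance of $\Theta_M$ under negation of the lattice with the three sign flips from the leaf factors $w_r-w_r^{-1}$, and then invoke Lemma \ref{L:removable}. The only cosmetic difference is that the paper records the change of variables $w\mapsto w^{-1}$ through the explicit integral identities \eqref{E:winvert} rather than as a direct substitution in the multivariable integral.
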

\begin{proof}
First, observe that changing $w\mapsto w^{-1}$ gives
\begin{align}
\label{E:winvert}
\int_{|w| = 1} w^n \frac{dw}{w} &= \int_{|w| = 1} w^{-n} \frac{dw}{w},\qquad
\int_{|w|=1}  \left(w-\frac{1}{w}\right)w^n \frac{dw}{w} =- \int_{|w|=1} \left(w-\frac{1}{w}\right)w^{-n} \frac{dw}{w}.
\end{align}
Furthermore, changing $\b n\mapsto -\b n$ in the summation of $\Theta_M(\b w)$  implies that
\[
\Theta_M(\b w)= \Theta_M\left(\b w^{-1}\right),
\]
where $\b w^{-1}:=(w_1^{-1},\ldots,w_N^{-1})$. Combined with \eqref{E:winvert} this then directly gives that
\begin{align*}
h\left(w_4^{-1}\right)& =- h(w_4).
\end{align*}
The conclusion follows from Lemma \ref{L:removable}.
\end{proof}

\subsection{Quantum modularity}
\label{S:Quantum}

Here we prove that $Z(q)$  is a quantum modular form for every  3-leg star  graph whose $M$ matrix is positive definite.

\begin{theorem}\label{T:Zqquantum}
	For any $3$-leg star graph, $Z(q)$ is a linear combination of quantum modular forms (up to $q$-powers) of weight $\frac12$ and quantum set $\mathbb{Q}$. In particular, the statement of Conjecture \ref{C:Gukov} is true for unimodular linking matrices.
\end{theorem}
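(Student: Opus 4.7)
My plan is to start from Proposition~\ref{P:Z3star}, which writes $Z(q)$ as a single constant term, and reduce it to a finite $\Q$-linear combination of one-variable false theta sums of type $F_{j,p}$. By substituting $\b m = M\b n$ in \eqref{E:Theta2}, we have $\Theta_M(\b w) = \sum_{\b n \in \Z^N} q^{\frac12 \b n^T M \b n} \prod_j w_j^{(M\b n)_j}$. Label the central vertex by $c$ (degree 3), the leaves by $1,2,3$, and note that the remaining $N-4$ vertices on the three legs are interior (degree 2). For each interior vertex $j$, the constant term in $w_j$ forces $(M\b n)_j = 0$; solving this $(N-4)\times(N-4)$ linear system yields a Chebyshev-type recursion along each leg that expresses every interior leg variable as a $\Z$-linear function of $n_c$ and of one remaining leg parameter.

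For each leaf $r$, the factor $(w_r - w_r^{-1})$ combined with \eqref{E:CTw-w^-1} produces a sum over sign choices $\bb\epsilon = (\epsilon_1,\epsilon_2,\epsilon_3)\in\{\pm 1\}^3$ weighted by $-\epsilon_1\epsilon_2\epsilon_3$, subject to $(M\b n)_r=\epsilon_r$. These three additional conditions fix the remaining leg parameters in terms of $n_c$ and $\bb\epsilon$ and restrict $n_c$ to certain arithmetic progressions determined by the leg framing data. Symmetrizing via $\Theta_M(\b w) = \Theta_M(\b w^{-1})$ and applying \eqref{E:CT=sgno} at the central vertex replaces its contribution by $\sgn((M\b n)_c)$ on the sublattice where $(M\b n)_c$ is odd. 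Back-substituting the solved leg variables turns $\frac12 \b n^T M \b n$ into a quadratic $Q_{\bb\epsilon}(n_c)$ with positive leading coefficient (by positive definiteness of $M$) and $(M\b n)_c$ into a linear function $L_{\bb\epsilon}(n_c)$.

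Splitting $n_c$ into residue classes encoding the leg congruences together with the parity condition on $L_{\bb\epsilon}$ and completing the square in each class writes $Z(q)$ as a finite $\Q$-linear combination of sums
\begin{equation*}
\sum_{n\in\Z} \sgn^*(n - n_0)\, q^{\alpha(n+\beta)^2 + \gamma}
\end{equation*}
with $\alpha\in\Q_{>0}$ and $\beta,\gamma,n_0\in\Q$. Applying \eqref{rel1}--\eqref{rel2} (splitting further modulo $2\alpha$ to absorb the sign shift $n_0$) rewrites each such sum as a finite $\Q$-linear combination of rescaled $F_{j,p}$'s up to a $q$-power, and Propositions~\ref{quantum-12} and~\ref{quantum_set} then yield that each summand is a weight-$\frac12$ quantum modular form with quantum set $\Q$, proving the first assertion. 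For the second assertion, combining this with the equality $\widehat{Z}_{\bb\delta}(q^2)=Z(q)$ established earlier for unimodular $M$ and again invoking Proposition~\ref{quantum_set} transfers the quantum modularity of $Z(q)$ to that of $\widehat{Z}_0(q)$.

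The hard part will be the algebraic bookkeeping of the Chebyshev-type recursions along legs with several interior nodes, and in particular coordinating the three sets of leg congruence conditions with the oddness condition on $L_{\bb\epsilon}(n_c)$ so that each residue class contributes cleanly a standard false theta function of the form needed by the $F_{j,p}$ identities \eqref{rel1}--\eqref{rel2}.
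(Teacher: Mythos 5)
Your proposal is correct and reaches the conclusion by the same analytic core as the paper — constant-term extraction vertex by vertex, pairing $\b n$ with $-\b n$ so that \eqref{E:CT=sgno} applies at the central vertex, splitting into residue classes, completing the square, and citing Propositions \ref{quantum-12} and \ref{quantum_set} — but it handles the lattice bookkeeping by a genuinely different route. You work in the preimage coordinates $\b n$ (so the exponent of $w_j$ is $(M\b n)_j$ and the quadratic form is $\frac12\b n^TM\b n$) and must explicitly solve the Chebyshev-type recursions $(M\b n)_j=0$ along the legs, which is exactly the step you flag as the hard part. The paper instead stays in the image coordinates $\b m\in M\Z^N$ of \eqref{E:Theta2}: there the interior constant terms trivially force $m_j=0$ and the leaf factors force $m_r=\pm1$, and the only residual question — which vectors $(\pm1,\pm1,\pm1,m_4,0,\dots,0)^T$ actually lie in $M\Z^N$ — is answered by Lemma \ref{help}, which encodes membership as the single congruence $\mathrm{adj}(M)\b m\equiv 0\pmod{\det M}$ and is then reduced by the Chinese Remainder Theorem to $m_4$ running over arithmetic progressions. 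The two parametrizations are equivalent (the paper itself calls yours the ``direct approach'' and uses it in Proposition \ref{P:4star}), but the adjugate/congruence route eliminates the leg recursions entirely and produces the arithmetic progressions in the central variable for free; if you pursue your version you still need to justify that for each sign pattern $\bb\epsilon$ the integral solution set is a coset of a rank-one lattice on which $n_c$ is a faithful parameter (this follows from positive definiteness: $M\b v\in\Z e_c$ together with $v_c=0$ gives $\b v^TM\b v=0$, hence $\b v=0$). One small imprecision: the solved interior variables are $\Q$-linear, not $\Z$-linear, in the free parameters, but your own remark that integrality restricts $n_c$ to arithmetic progressions absorbs this; likewise the finitely many terms where $\sgn^*(n-n_0)$ disagrees with the sign built into $F_{j,p}$ contribute only a finite $q$-series, exactly the correction $r(q)$ in the paper's argument.
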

We begin by considering the special case where $Z(q)$ is the $3$-star graph (which has just four vertices).
If $A = (a_{jk})_{1\leq j,k\leq 4}$ is a positive definite symmetric $4\times4$ matrix with rational entries, then we define, with $Q(\b{m}) := \frac12 \b{m}^T A {\b m}$,
\begin{equation}
\label{E:ZAq}
Z_A(q) := {\rm CT}_{\b w} \left( \frac{\prod_{r=1}^3\left(w_r-w_r^{-1}\right)}{w_4-w_4^{-1}} \sum_{{\b m} \in \Z^4 \cap A^{-1} \Z^4}  q^{Q(\b{m})} e^{2\pi i\b m^T\b z} \right).
\end{equation}

We next express $Z_A(q)$ as a linear combination of false theta functions for a large class of matrices. In particular, up to a rational $q$-power, each of these terms is a quantum modular form.

\begin{proposition} \label{P:quantum}  Suppose that $A \b{m} \in \Z^4$ for all $\b m\in \Z^4$ such that $m_1, m_2, m_3 \in \{\pm 1\}$, and $m_4$ is odd. Then we have, with the $b_j$ and $c_j$ defined in \eqref{defineb} and $d_j:=c_j-\frac{b_j^2}{2a_{44}}$,
\begin{equation}\label{Zfalse}
Z_A(q) = \sum_{j=1}^4 q^{d_j} F_{a_{44}-b_j,a_{44}}(2a_{44}\tau).
\end{equation}

As $t \to 0^+$, we have
\begin{align}\label{AsZA}
Z_A\left(e^{-2\pi t
	}\right) \sim -\frac{8 \pi}{a_{44}^2} \Big(a_{44} \left(a_{12} a_{34} + a_{13} a_{24} + a_{14} a_{23}\right) - 2 a_{14} a_{24} a_{34}\Big)t.
\end{align}
\end{proposition}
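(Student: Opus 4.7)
The strategy is to compute $Z_A(q)$ by direct constant term extraction in each of the four variables $w_1,\ldots,w_4$, and then recognize the resulting series as a combination of the false theta functions $F_{j,p}$ from \eqref{E:Fjp}. First I would expand $\prod_{r=1}^3 (w_r-w_r^{-1}) = \sum_{\epsilon\in\{\pm1\}^3}\epsilon_1\epsilon_2\epsilon_3 w_1^{\epsilon_1}w_2^{\epsilon_2}w_3^{\epsilon_3}$. Applying \eqref{E:CTw-w^-1} in each of $w_1,w_2,w_3$ forces $m_r=\epsilon_r\in\{\pm1\}$ for $r=1,2,3$, and then \eqref{E:CT1/w-w^-1} in $w_4$ forces $m_4$ to be an odd negative integer. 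The standing hypothesis that $A\b{m}\in\Z^4$ under exactly these parity restrictions ensures the lattice condition $\b{m}\in A^{-1}\Z^4$ is automatically satisfied, so the summation reduces to a free sum over odd $m_4\leq -1$ for each of the $8$ sign patterns $\epsilon$.

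Next, the symmetry $(\epsilon,m_4)\mapsto(-\epsilon,-m_4)$ preserves the quadratic form $Q_\epsilon(m_4)$ but flips both $\epsilon_1\epsilon_2\epsilon_3$ and the range of $m_4$; averaging these two presentations of $Z_A(q)$ produces a single sum over all odd $m_4\in\Z$ weighted by $\sgn(m_4)$. Completing the square yields $Q_\epsilon(m_4)=d_\epsilon+n^2/(2a_{44})$ where $n:=a_{44}m_4+b_\epsilon$, and \eqref{rel2} (together with the observation $\sgn(m_4)=\sgn(n)$ under the parity conditions when $|b_\epsilon|<a_{44}$, with boundary cases handled by \eqref{rel1}) identifies the inner sum with $-F_{a_{44}-b_\epsilon,a_{44}}(2a_{44}\tau)$. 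The $8$ terms then collapse to $4$ by means of the reflection identity $F_{2p-j,p}(\tau)=-F_{j,p}(\tau)$, which follows from substituting $m\mapsto-m-1$ in \eqref{E:Fjp} and using $\sgn^*(-m-1)=-\sgn^*(m)$. Since $d_{-\epsilon}=d_\epsilon$, $b_{-\epsilon}=-b_\epsilon$, and $\epsilon_1\epsilon_2\epsilon_3$ flips under $\epsilon\mapsto-\epsilon$, the identity $F_{a_{44}+b_\epsilon,a_{44}}=-F_{a_{44}-b_\epsilon,a_{44}}$ shows that each pair $(\epsilon,-\epsilon)$ contributes a single clean term; restricting to the $4$ representatives with $\epsilon_1\epsilon_2\epsilon_3=+1$ yields \eqref{Zfalse}.

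For the asymptotic \eqref{AsZA}, I would substitute the expansion \eqref{E:Fasymp} into \eqref{Zfalse} and multiply by $q^{d_j}=1-2\pi d_j t+O(t^2)$. The constant term of $F_{a_{44}-b_j,a_{44}}(2a_{44}it)$ is $-2B_1\bigl(\tfrac12-\tfrac{b_j}{2a_{44}}\bigr)=b_j/a_{44}$, so the $t^0$-coefficient of $Z_A$ equals $a_{44}^{-1}\sum_j b_j$, which vanishes because each $a_{r4}$ appears with both signs across the chosen representatives. The linear-in-$t$ coefficient combines the $B_3$-contribution from \eqref{E:Fasymp} (using the exact identity $B_3(\tfrac12-y)=\tfrac{y}{4}-y^3$) with $-2\pi d_j t\cdot b_j/a_{44}$; substituting $d_j=c_j-b_j^2/(2a_{44})$ reduces everything to computing $\sum_j b_j^3$ and $\sum_j c_j b_j$. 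These collapse by elementary combinatorics on the four chosen sign patterns to $24\,a_{14}a_{24}a_{34}$ and $4(a_{12}a_{34}+a_{13}a_{24}+a_{14}a_{23})$ respectively, producing the asserted coefficient $-\tfrac{8\pi}{a_{44}^2}\bigl(a_{44}(a_{12}a_{34}+a_{13}a_{24}+a_{14}a_{23})-2a_{14}a_{24}a_{34}\bigr)$.

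The main obstacle is bookkeeping rather than conceptual difficulty: first, consistently tracking the sign factor $\epsilon_1\epsilon_2\epsilon_3$ through the reflection identity when reducing the $8$-term sum to $4$ terms (and verifying the sign does come out $+1$ after the reflection collapse), and second, carefully evaluating the polynomial identities $\sum_j b_j=0$, $\sum_j b_j^3=24\,a_{14}a_{24}a_{34}$, and $\sum_j c_j b_j=4(a_{12}a_{34}+a_{13}a_{24}+a_{14}a_{23})$ without arithmetic slips, as these are what ultimately encode the symmetric tensor structure of $A$ into the simple leading asymptotic.
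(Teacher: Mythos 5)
Your proposal is correct and follows essentially the same route as the paper: constant-term extraction in $w_1,w_2,w_3$ via \eqref{E:CTw-w^-1}, pairing each sign pattern with its negative to produce $\sgn$-weighted sums over odd $m_4$, completing the square to identify the four false theta functions, and the Bernoulli-polynomial asymptotics using $\sum_j b_j=0$, $\sum_j b_jc_j=4(a_{12}a_{34}+a_{13}a_{24}+a_{14}a_{23})$, and $\sum_j b_j^3=24a_{14}a_{24}a_{34}$. The only cosmetic differences are that you perform the eight-to-four collapse at the level of the $F$'s via $F_{2p-j,p}=-F_{j,p}$ while the paper does the same pairing one step earlier via \eqref{E:CT=sgno}, and that the identity \eqref{E:sgn=F} is in fact exact, so the boundary corrections you hedge about via \eqref{rel1} are not needed.
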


\begin{proof}

By assumption we have $A \b m\in\Z^4$ for any vector with $m_1, m_2, m_3 \in \{\pm1\}$, and $m_4$ odd. We may therefore evaluate the constant terms with respect to $w_1, w_2,$ and $w_3$ in \eqref{E:ZAq} using \eqref{E:CTw-w^-1}, which gives that
\begin{align}\label{E:ZADef}
\begin{split} Z_A(q)&=-{\rm CT}_{w} \left( \left(w-w^{-1}\right)^{-1}\sum_{m_1, m_2, m_3 \in \{\pm 1\} } m_1m_2m_3  \sum_{m_4 \in \mathbb{Z}}
	q^{Q(\b{m})}w^{m_4} \right)\\
&={\rm CT}_{w}  \left(\left(w-w^{-1}\right)^{-1} \sum_{j=1}^4 \sum_{\pm}\mp \sum_{m \in \mathbb{Z}} q^{  \frac{1}{2} a_{44}m^2 \pm b_j m+c_j } w^{m}\right),
\end{split}\end{align}
where for brevity we write  $\sum_{\pm} \mp \sum a(\pm)  :=- \sum a(+) + \sum a(-)$, and
\begin{align}\label{defineb}
b_1&:=\sum_{j=1}^3a_{j4},\qquad b_2:=a_{14}-a_{24}-a_{34},&\qquad b_3&:=-a_{14}-a_{24}+a_{34},\qquad b_4:=-a_{14}+a_{24}-a_{34},\\\notag
c_1&:=\frac12\sum_{j=1}^3 a_{jj} + \sum_{1\leq j<\ell\leq 3} a_{j\ell},&\qquad c_2&:=\frac12\sum_{j=1}^3 a_{jj}-a_{12}-a_{13}+a_{23},\\\notag
c_3&:=\frac12\sum_{j=1}^3 a_{jj}+a_{12}-a_{13}-a_{23},&\qquad c_4&:=\frac12\sum_{j=1}^3 a_{jj}-a_{12}+a_{13}-a_{23}.
\end{align}

Note that according to the definition of $Z_A(q)$, the sum in \eqref{E:ZADef} is over those $\b m$ which satisfy $A \b{m} \in \Z^4,$ and our assumption only guarantees that all odd $m_4$ are included. However, we  see below that any even values of $m_4$ vanish regardless, so we may write the sum over $m_4 \in \Z$.
Indeed, replacing $m \mapsto -m$ in the terms with a minus sign , and then applying \eqref{E:CT=sgno}, we obtain
\begin{align}
\label{E:CTw=odd}
\nonumber Z_A(q)&={\rm CT}_{w}  \left(\sum_{j=1}^4 \sum_{m \in \mathbb{Z}} q^{  \frac{1}{2} a_{44}m^2 - b_j m+c_j } \frac{w^m - w^{-m}}{w-w^{-1}}\right)\\
 &= \sum_{j=1}^4 \sum_{m \equiv 1\pmod{2}} {\rm sgn}(m) q^{  \frac{1}{2} a_{44}m^2 - b_j m+c_j }.
\end{align}

In general, we can write
\begin{align}\label{E:sgn=F}
\sum_{m\equiv 1\pmod{2}} \sgn(m) q^{\frac12 a m^2 - b m} =  q^{-\frac{b^2}{2a}} F_{a-b,a}(2a\tau).
\end{align}
Applying this to \eqref{E:CTw=odd}, we note that $2a_{44} \in \Z$ since by assumption $A (\pm 1, \pm 1, \pm 1, 1)^T\in\Z^4$, and adding these two relations gives the claim. Furthermore, since the fourth entry of $A (\varepsilon_1, \varepsilon_2, \varepsilon_3, 1)^T$ is $\varepsilon_1 a_{14} + \varepsilon_2 a_{24} + \varepsilon_3 a_{34} + a_{44}$ for any $\varepsilon_j \in \{\pm1\}$, we also conclude that $b_j -a_{44} \in \Z$ for all $j$. We therefore obtain \eqref{Zfalse}.

In order to prove \eqref{AsZA}, we plug in to \eqref{E:Fasymp}. A short calculation shows that $\sum_j b_j = 0$, which implies that the constant term in $Z_A(e^{-2 \pi t})$ vanishes. This leaves
\begin{equation*}
Z_A\left(e^{-2 \pi t}\right) \sim
-\frac{\pi}{3a_{44}^2} \left(6a_{44} \sum_{j=1}^{4} b_j c_j - 2 \sum_{j=1}^{4} b_j^3 - a_{44}^{2} \sum_{j=1}^{4}b_j \right) t,
\end{equation*}
which simplifies to the claimed expression (using the fact that $\sum_j b_j^3 = 24a_{14}a_{24}a_{34}$).
\end{proof}

Our next goal is to modify the proof above so that it applies to any 3-leg star graph. First we need to characterize the summation conditions that appear in the theta function \eqref{E:Theta2}.

\begin{lemma}
\label{help} Let $M \mathbb{Z}^N \subset \mathbb{Z}^N$ be an integral lattice and $D:={\rm det}(M)$.
Then there exist an additive subgroup $S$ of $(\Z / D\Z)^N$ such that $\b{m} \in M \Z^N$ if and only if $\b{m} \equiv \b{s} \pmod{D}$ for some $\b{s} \in S$.
\end{lemma}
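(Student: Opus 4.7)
The plan is to show that $D\Z^N$ is contained in $M\Z^N$; once this is established, $M\Z^N$ is automatically a union of cosets of $D\Z^N$ inside $\Z^N$, and we may simply take $S := M\Z^N / D\Z^N$ regarded as an additive subgroup of $(\Z/D\Z)^N = \Z^N/D\Z^N$. The condition $\b{m}\in M\Z^N$ then translates precisely to $\b{m} \equiv \b{s} \pmod{D}$ for some $\b{s} \in S$.

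To prove the inclusion $D\Z^N \subseteq M\Z^N$, I would invoke the classical adjugate (adjoint) identity from linear algebra: $\mathrm{adj}(M) \cdot M = M \cdot \mathrm{adj}(M) = \det(M) I_N = D \cdot I_N$. Since the entries of $\mathrm{adj}(M)$ are $(N-1)\times(N-1)$ minors of $M$, and $M$ has integer entries, we have $\mathrm{adj}(M)\in M_N(\Z)$. Consequently, for any $\b{v} \in \Z^N$,
\[
D \b{v} = M\bigl(\mathrm{adj}(M)\,\b{v}\bigr) \in M\Z^N,
\]
which gives the desired containment.

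With this in hand, the natural quotient map $M\Z^N \to M\Z^N/D\Z^N$ produces a well-defined subgroup $S$ of $\Z^N/D\Z^N$, and by construction $\b{m} \in M\Z^N$ if and only if the reduction $\b{m}\bmod D$ lies in $S$. I do not anticipate any real obstacle here; the statement is essentially a packaging of the elementary fact that $D\cdot M^{-1}$ has integer entries when $D=\det(M)$, so the argument is short and direct.
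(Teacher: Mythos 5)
Your proof is correct and rests on the same essential ingredient as the paper's: the integrality of $\mathrm{adj}(M)$ together with the identity $M\,\mathrm{adj}(M)=D I_N$. The only (cosmetic) difference is that you realize $S$ as the image of $M\Z^N$ in $(\Z/D\Z)^N$ after checking $D\Z^N\subseteq M\Z^N$, whereas the paper realizes the same subgroup as the kernel of the map induced by $\mathrm{adj}(M)$ on $(\Z/D\Z)^N$.
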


\begin{proof}[Proof of Lemma \ref{help}] Clearly ${\b m} \in M \mathbb{Z}^N$ if and only if there exists $\b x\in\Z^N$ with ${\b m} = M {\b x}$. This is equivalent to $M^{-1} \b m= {\rm adj}(M){\rm det}(M)^{-1} \b m \in \mathbb{Z}^N$. Now consider $\varphi : \b{x} \mapsto {\rm adj}(M) \b{x},$ which is a group automorphism of $\left(\Z / D\Z\right)^N$. Thus the subgroup $S$ is simply the kernel  ${\rm ker}(\varphi).$
\end{proof}

We are now ready to prove Theorem \ref{T:Zqquantum}.
\begin{proof}[Proof of Theorem \ref{T:Zqquantum}]
The strategy is similar to the proof of Proposition \ref{P:quantum}. Beginning from \eqref{E:Theta2} and Proposition \ref{P:Z3star}, we first evaluate the constant terms in $w_5, \dots, w_N$, and find that
\begin{align}\notag
Z(q) & = q^{\frac{-3N + \sum_{\nu = 1}^N a_\nu}{2}} {\rm CT}_{\b{w}} \left(\frac{\prod_{r=1}^3 \left(w_r - w_r^{-1}\right)}{w_4 - w_4^{-1}} \sum_{\b{m} \in M \Z^N} q^{Q_2(\b{m})} e^{2\pi i\b m^T\b z}\right) \\\notag
& = q^{\frac{-3N + \sum_{\nu = 1}^N a_\nu}{2}} {\rm CT}_{w_1, w_2, w_3, w_4} \left(\frac{\prod_{r=1}^3 \left(w_r - w_r^{-1}\right)}{w_4 - w_4^{-1}} \hspace{-1em} \sum_{\substack{\b{m} = (m_1, m_2, m_3, m_4, 0, \cdots, 0)^T \in M \Z^N }}  \hspace{-1em}q^{Q_2(\b{m})}  e^{2\pi i\b m^T\b z} \right)\\
&=-q^{\frac{-3N + \sum_{\nu = 1}^N a_\nu}{2}} {\rm CT}_{w_4} \left(\sum_{\substack{m_1, m_2, m_3 \in \{\pm 1\} \\ \b{m} = (m_1, m_2, m_3, m_4, 0, \cdots, 0)^T \in M \Z^N }} m_1 m_2 m_3 \,  q^{Q_2(\b{m})} \frac{w_4^{m_4}}{w_4 - w_4^{-1}}\right), \label{E:CTw4}
\end{align}
where we use \eqref{E:CTw-w^-1} to evaluate the constant terms in $w_1, w_2$, and $w_3$.
Lemma \ref{help} implies that there exists an additive subgroup $S \subset (\Z / D \Z)^N$ such that $\b{m} \in M \Z^N$ if and only if $\b{m} \equiv \b{s} \pmod{D}$ for some $\b{s} \in S$. Now let $T \subset S$ be the subset of elements of the form $\b{s} = (\alpha, \beta, \gamma, g, 0, \cdots, 0)^T \in S$, where $\alpha, \beta, \gamma = \pm 1$ (interpreting $-1$ as a residue modulo $D$), and $g$ is a residue modulo $D$. The fact that $S$ is a subgroup then implies that $T = -T$ as well. Using this symmetry, we can therefore pair $\b{s}$ and $-\b{s}$, and write \eqref{E:CTw4} as
\begin{multline}
\label{E:CTw4=j}
-\frac12 q^{\frac{-3N + \sum_{\nu = 1}^N a_\nu}{2}} {\rm CT}_{w_4} \left(\sum_{\b{s} \in T} \; \sum_{\substack{ \b{m} = (\alpha, \beta, \gamma, m_4, 0, \cdots, 0)^T \\
m_4 \equiv g \pmod{D}}} \alpha \beta \gamma \, q^{Q_2(\b{m})} \frac{w_4^{m_4} - w_4^{-m_4}}{w_4 - w_4^{-1}}\right) \\
= -\frac12 q^{\frac{-3N + \sum_{\nu = 1}^N a_\nu}{2}} \sum_{\b{s} \in T} \; \sum_{\substack{ \b{m} = (\alpha, \beta, \gamma, m_4, 0, \cdots, 0)^T \\
m_4 \equiv g \pmod{D} \} \\ m_4 \equiv 1 \pmod{2}}} \alpha \beta \gamma \, \sgn(m_4) \, q^{Q_2(\b{m})},
\end{multline}
where we employ \eqref{E:CT=sgno}. Ignoring constant factors for fixed $\b{s}$, the inner sum has the form in \eqref{E:CTw4=j}
\begin{equation*}
\sum_{\substack{m \equiv g \pmod{D} \\ m \equiv 1 \pmod{2}}} \sgn(m) q^{\frac12 a m^2 + bm + c}
\end{equation*}
for certain $a,b,c\in\Q$.
The system
\begin{equation} \label{E:system}
m \equiv g \pmod{D}, \ \ \ \ m \equiv 1 \pmod{2}
\end{equation}
has a solution if and only if ${\rm gcd}(2,D) \mid (g-1)$, which splits into two cases depending on whether $D$ is odd, or whether $D$ is even and $g$ is odd.

If $D$ is odd, then we have a unique solution $h$ modulo $2D$ of \eqref{E:system}, so the sum turns into
\begin{align*}
	\sum_{m \equiv h\pmod{2D} } {\rm sgn}(m) q^{ \frac{1}{2} a m^2 + b  m+c }= q^{c - \frac{b^2}{2a}} F_{ha+b, a D}\left(2 a D^2 \tau\right)+r(q),
\end{align*}
where $r(q)$ is a finite sum of rational powers of $q$ (due to the shift in the {\rm sgn}-function the two series can have different signs for finitely many $m$).
As above, the $F_{j,p}$ are all quantum modular forms, and so is $r$.

If $D$ is even and $g$ is odd, then we have $m \equiv g \pmod{D}$ (and $D=2k$ is even),  so we get
	$$
\sum_{m \equiv g\pmod{2k}} {\rm sgn}(m) q^{ \frac{1}{2} a m^2 + b  m+c }
	= q^{c - \frac{b^2}{2a}} F_{ga+b, a k}\left(2a k^2 \tau\right)+r(q).
	$$

Combining all cases and recalling \eqref{E:CTw4}, we therefore conclude that $Z(q)$ is a finite sum of terms of the form $q^\varrho F(q)$, where $\varrho$ is rational and $F$ is a quantum modular form with quantum set $\Q$, which completes the proof.
\end{proof}

A closer analysis of the proof of Theorem \ref{T:Zqquantum} also provides a criterion for when the calculation of $Z(q)$ reduces to a 3-star graph, which can then often be computed using Proposition \ref{P:quantum}. Let $A$ be the restriction of ${\rm adj}(M)$ to rows and columns 1, 2, 3, and 4. Furthermore, define
\begin{align*}
\mathcal{V}& := \left\{ \b{m} = \left(m_1, m_2, m_3, m_4, 0, \cdots, 0\right)^T \; : \; m_1, m_2, m_3 \in \{\pm1\}, m_4 \in \Z \right\},\\
\Omega & := \left\{ \b{m} \in \mathcal{V} :  {\rm adj}(M) \b{m} \equiv 0 \pmod{D} \right\},\ \  \Omega_A  := \left\{ \b{m} \in \mathcal{V} : A \left(m_1, m_2, m_3, m_4\right)^T \equiv 0 \pmod{D} \right\}.
\end{align*}
By definition, $\Omega_A \subset \Omega$.
\begin{corollary}
\label{C:Z=ZA}
If $\Omega = \Omega_A$, then
\begin{equation*}
Z(q) = q^{\frac{-3N + \sum_{\nu = 1}^N a_\nu}{2}} Z_A(q).
\end{equation*}
Furthermore, if $M$ is unimodular, then Proposition \ref{P:quantum} always applies to $Z_A(q)$.
\end{corollary}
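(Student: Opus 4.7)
The plan is to start from the intermediate formula \eqref{E:CTw4} in the proof of Theorem \ref{T:Zqquantum}, which expresses $Z(q)$ as $q^{(-3N+\sum_\nu a_\nu)/2}$ times the constant term in $w_4$ of a sum indexed by $\b m\in M\Z^N\cap\mathcal{V}$, and then to identify this sum with the one defining $Z_A(q)$ in \eqref{E:ZAq}.

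For the first assertion, I would proceed in three steps. First, by Lemma \ref{help}, the condition $\b m\in M\Z^N$ is equivalent to ${\rm adj}(M)\b m\equiv 0\pmod{D}$, which for $\b m\in\mathcal{V}$ is exactly the defining condition of $\Omega$. The hypothesis $\Omega=\Omega_A$ then allows this to be replaced by the condition $A(m_1,m_2,m_3,m_4)^T\equiv 0\pmod{D}$, depending only on the first four coordinates. Second, for any $\b m\in\mathcal{V}$, using $M^{-1}={\rm adj}(M)/D$, one computes
\[
\b m^T M^{-1}\b m = \tfrac{1}{D}(m_1,m_2,m_3,m_4)\, A\,(m_1,m_2,m_3,m_4)^T,
\]
since only the top-left $4\times 4$ block of ${\rm adj}(M)$ contributes when $\b m$ has its last $N-4$ entries zero. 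Third, substituting these simplifications into \eqref{E:CTw4} and comparing directly with \eqref{E:ZAq} then yields the claimed identity $Z(q)=q^{(-3N+\sum_\nu a_\nu)/2}Z_A(q)$.

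For the second assertion, when $M$ is unimodular we have $D=1$ and ${\rm adj}(M)=M^{-1}$ is an integer matrix, so $A$ itself is integer-valued. In particular, $A\b m\in\Z^4$ for every $\b m\in\Z^4$, and a fortiori for those with $m_1,m_2,m_3\in\{\pm1\}$ and $m_4$ odd. This is precisely the hypothesis of Proposition \ref{P:quantum}, which therefore applies and yields the explicit false-theta decomposition \eqref{Zfalse} for $Z_A(q)$.

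The most delicate step will be verifying that the summation condition $A(m_1,\ldots,m_4)^T\equiv 0\pmod{D}$ inherited from $\Omega_A$ matches the condition $(m_1,\ldots,m_4)^T\in\Z^4\cap A^{-1}\Z^4$ from \eqref{E:ZAq}, and that the respective quadratic forms line up despite the $1/D$ rescaling above. In the unimodular case this matching is transparent because both conditions reduce to $\Z^4$ and no rescaling occurs, but handling general $D$ requires carefully tracking how the determinant simultaneously rescales the lattice condition and the quadratic form.
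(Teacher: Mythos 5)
Your proposal follows the same route as the paper's own proof: both start from the intermediate formula \eqref{E:CTw4}, use Lemma \ref{help} together with the hypothesis $\Omega=\Omega_A$ to reduce the summation condition to the first four coordinates, compare directly with \eqref{E:ZAq}, and handle the unimodular case by observing that the congruences modulo $D$ become trivial and $A$ is integral. The $1/D$ normalization issue you flag at the end is a genuine wrinkle, but it stems from the paper defining $A$ as the restriction of $\operatorname{adj}(M)$ while its worked examples (e.g.\ $E_7$ and $D_{N+2}$ with $N$ even) actually take $A$ to be the corresponding restriction of $M^{-1}=\operatorname{adj}(M)/D$, under which reading both the quadratic forms and the lattice conditions match exactly; the paper's proof silently elides this point, so your explicit computation of $\b m^T M^{-1}\b m$ in terms of the $4\times 4$ block is a useful addition rather than a gap.
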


\begin{proof}
We begin by rewriting \eqref{E:CTw4} using the above notation, as well as the assumption that $\Omega = \Omega_A$, which gives
\begin{align*}
Z(q) &= -q^{\frac{-3N + \sum_{\nu = 1}^N a_\nu}{2}} {\rm CT}_{w_4} \left(\sum_{\b{m} \in \Omega} m_1 m_2 m_3 q^{Q_2(\b{m})} \frac{w_4^{m_4}}{w_4 - w_4^{-1}}\right) \\
&= -q^{\frac{-3N + \sum_{\nu = 1}^N a_\nu}{2}} {\rm CT}_{w_4} \left(\sum_{\b{m} \in \Omega_A} m_1 m_2 m_3 q^{Q_2(\b{m})} \frac{w_4^{m_4}}{w_4 - w_4^{-1}}\right).
\end{align*}
Evaluating and comparing to \eqref{E:ZAq}, one sees that this second constant term expression is simply $Z_A(q)$ (after evaluating the constant terms in $w_1, w_2,$ and $w_3$).

Finally, in the case that $M$ is unimodular, the congruences modulo $D$ are trivial, so $\Omega = \Omega_A = \mathcal V$. Proposition \ref{P:quantum} also clearly applies, since $A$ has integer entries. \qedhere

\end{proof}

\section{Explicit examples of $3$-leg star graphs}
\label{S:3starEx}
In this section we compute $Z(q)$ for all cases where $M$ is the Cartan matrix of a simple Lie group whose Dynkin diagram is a 3-leg star graph  (see \cite[pages 164--71]{OV}). In particular, throughout we write $Z_{\frak{g}}(q)$ to indicate $Z(q)$ in the case that the 3-leg star graph $G$ is the Dynkin diagram for the Lie algebra $\frak{g}$, with labeling matrix $M$ defined by $a_j = 2$.  Note that the vertex numbering in this section differs from our general constructions above. In all cases we write $Z_{\frak{g}}(q)$ as a quantum modular form, and determine its asymptotic behavior as $q \to 1^-$.
\begin{remark}
	Observe that the residue classes appearing in the formulas for $Z_{\frak{g}}(q)$ below are always exponents of the corresponding Lie algebra, namely: $\{ 1, N-1\}$ for $D_{N+2}$,
	$\{1,5\}$ for $E_6$, $\{1,5,7,11\}$ for $E_7$, and  $\{1,11,19,29 \}$ for $E_8$ (see \cite[page 299]{OV}). It would be interesting to find an explanation for this numerical coincidence.
\end{remark}

\subsection{$D_{N+2}$}

We begin with the case $\frak{g} = D_{N+2}$ for $N \geq 2$.
\begin{proposition}
\label{P:DN+2}
\begin{enumerate}[leftmargin=*, label=\textnormal{(\arabic*)}]
	\item For $N$ odd, we have
	\begin{equation}
	\label{DF}
	\begin{aligned}
	Z_{D_{N+2}}(q) & =q^{- \frac{N}{2}- \frac{1}{2N}} \left(F_{-1,N}(2N\tau) + F_{2N-1,N}(2N \tau)\right) \\
	& = -2q^{-\frac{N}{2}}\sum_{m\equiv 1\pmod{2N}}\sgn(m)q^{\frac{m^2-1}{2N}}+2q^{-\frac{N}{2}}.
	\end{aligned}
	\end{equation}
	As $t \to 0^+$, we have
	\begin{equation}\label{Das}
	Z_{D_{N+2}}\left(e^{-2\pi t}\right) \sim \frac{2}{N}.
	\end{equation}
	\item  For $N$ even, we have
	\begin{equation*}
	\begin{aligned}
	Z_{D_{N+2}}(q) & =q^{-\frac{N}{2}-\frac{1}{2N}}\left(F_{-1,N}(2N\tau)+2 F_{N+1,N}(2N\tau)+F_{2N-1,N}(2N\tau)\right) \\
	& = -2q^{-\frac{N}{2}}\sum_{m\equiv 1, N-1\pmod{2N}}\sgn(m)q^{\frac{m^2-1}{2N}}+2q^{-\frac{N}{2}}.
	\end{aligned}
	\end{equation*}
	As $t\to0^+$, we have
	\begin{equation*}
	Z_{D_{N+2}}\left(e^{-2\pi t}\right)\sim -2\pi t.
	\end{equation*}
\end{enumerate}
\noindent In particular $q^{\frac{1}{4}+\frac{1}{4N^2}}Z_{D_{N+2}}(q^{\frac{1}{2N}})$ is a quantum modular form with quantum set $\Q$.
\end{proposition}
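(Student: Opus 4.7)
The plan is to reduce $Z_{D_{N+2}}(q)$ to an explicit $\mathbb{Q}$-linear combination of the weight-$\frac12$ false theta functions $F_{j,N}(2N\tau)$ from \eqref{E:Fjp}, and then read off quantum modularity from Proposition \ref{quantum-12} and the small-$t$ asymptotics from \eqref{E:Fasymp}. The normalizations are designed so that after the substitution $q \mapsto q^{1/(2N)}$, which sends $F_{j,N}(2N\tau)$ to $F_{j,N}(\tau)$ and the prefactor $q^{-N/2 - 1/(2N)}$ to $q^{-1/4 - 1/(4N^2)}$, the function $q^{1/4 + 1/(4N^2)} Z_{D_{N+2}}(q^{1/(2N)})$ becomes a $\mathbb{Q}$-linear combination of quantum modular forms of weight $\frac12$ with quantum set $\mathbb{Q}$.

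First, I would apply Proposition \ref{P:Z3star} to express $Z_{D_{N+2}}(q)$ as a constant term in all $N+2$ variables. Label the leaves $v_1, v_2, v_3$, the fork $v_4$, and the interior long-leg vertices $v_5, \dots, v_{N+2}$, with $v_{N+2}$ adjacent to $v_3$. By \eqref{E:CTw-w^-1}, the three leaf factors force $m_1, m_2, m_3 \in \{\pm 1\}$, while the absence of rational prefactors at the interior long-leg nodes forces $m_5 = \cdots = m_{N+2} = 0$. Writing $\b m = M \b x$, the linear recurrence $2 x_j = x_{j-1} + x_{j+1}$ along the long leg forces $x_5, \dots, x_{N+2}$ to be a linear interpolation between the boundary values $x_4$ (fork) and $x_3$ (long-leg leaf), which imposes $(N-1) \mid (x_3 - x_4)$ and yields $m_3 = x_4 + N(x_3-x_4)/(N-1)$. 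Combined with $m_3 \in \{\pm 1\}$, this forces $x_4 \equiv \pm 1 \pmod N$, while the constraints from $v_1, v_2$ force $x_4$ odd.

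The quadratic form $\frac12 \b m^T M^{-1} \b m$ restricted to this family is a univariate expression in $m_4$ of the shape $\frac{m_4^2}{2N}+\text{linear}$. Symmetrizing the $w_4$ constant term by $m_4 \mapsto -m_4$ and applying \eqref{E:CT=sgno} exactly as in the proof of Proposition \ref{P:quantum} produces sums of the form $\sum_{m \equiv h \pmod{2N}} \sgn(m) q^{m^2/(2N)+\cdots}$, which \eqref{E:sgn=F} identifies with $F_{j,N}(2N\tau)$ up to a $q$-power. The case split on the parity of $N$ emerges from combining $x_4 \equiv \pm 1 \pmod N$ with $x_4$ odd: for $N$ odd only the extremal residues $m_4 \equiv \pm 1 \pmod{2N}$ survive, yielding $F_{-1,N} + F_{2N-1,N}$, while for $N$ even the intermediate residues $m_4 \equiv \pm(N-1) \pmod{2N}$ also appear, producing the additional $2 F_{N+1,N}$ term after combining symmetric contributions. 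The explicit sgn-sum forms in the proposition follow by applying \eqref{rel1} and \eqref{rel2}.

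For the asymptotics I would insert the relevant arguments into \eqref{E:Fasymp}: using $B_1(\frac{-1}{2N}) = -\frac{N+1}{2N}$ and $B_1(\frac{2N-1}{2N}) = \frac{N-1}{2N}$, for $N$ odd the leading values sum to $\frac{2}{N}$ and the $q$-prefactor tends to $1$, giving \eqref{Das}; for $N$ even the additional values $B_1(\frac{N\pm 1}{2N}) = \pm\frac{1}{2N}$ cause the constant terms to cancel, leaving only the linear-in-$t$ contribution, which a direct $B_3$ evaluation reveals to equal $-2\pi t$. Finally, Proposition \ref{quantum-12} gives that each $F_{j,N}$ is a weight-$\frac12$ quantum modular form on $\Gamma_1(4N)$ with quantum set $\mathbb{Q}$, transferring by the rescaling above to the claimed quantum modularity of $q^{1/4+1/(4N^2)} Z_{D_{N+2}}(q^{1/(2N)})$. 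The main obstacle is the bookkeeping in the second and third paragraphs: correctly tracking the integrality and parity constraints imposed by the recurrence along the long leg, and in particular verifying the dichotomy between $N$ odd and $N$ even that selects which residues modulo $2N$ actually contribute.
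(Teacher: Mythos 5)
Your proposal is correct in outline and reaches the stated formulas, but it takes a genuinely different route from the paper. You parametrize the lattice $M\Z^{N+2}$ directly by preimages $\b{m}=M\b{x}$ and solve the discrete Laplace condition $2x_j=x_{j-1}+x_{j+1}$ along the long leg, which forces linear interpolation, the divisibility $(N-1)\mid(x_3-x_4)$, and the congruences $x_4\equiv\pm1\pmod N$ with $x_4$ odd; both parities of $N$ are then handled uniformly, and the modulus $2N$ and the residues $\pm1,\pm(N-1)$ emerge transparently. The paper instead works with the exponent variables throughout: for $N$ odd it invokes Lemma \ref{help} and the explicit adjugate ${\rm adj}(M_{N+2})$ to reduce the membership condition to a congruence system $L\boldsymbol{\ell}\equiv 0\pmod 4$, and for $N$ even it renumbers and outsources the computation to Corollary \ref{C:Z=ZA} and Proposition \ref{P:quantum}. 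Your approach buys uniformity and a conceptual explanation of the residue classes; the paper's buys shorter computations by reusing its general machinery. One point you must repair before this is airtight: your assertion that the quadratic form is ``$\tfrac{m_4^2}{2N}+\text{linear}$'' is only true for the \emph{preimage} coordinate $x_4$ (indeed $\tfrac12\b{x}^TM\b{x}=\tfrac{x_4^2}{2N}+\tfrac{2N-1}{2N}$ on your family, with no linear term), whereas in terms of the actual exponent $m_4$ of $w_4$ the leading coefficient is $\tfrac N2$. This is not merely notational: the parity condition and the $\sgn$ coming from \eqref{E:CT=sgno} are imposed on $m_4=\tfrac{x_4-\varepsilon_3}{N}-\tfrac{\varepsilon_1+\varepsilon_2}{2}$, not on $x_4$, so you must (i) translate ``$m_4$ odd'' into the condition $\varepsilon_1=\varepsilon_2$ (for $N$ odd) respectively the split between $\varepsilon_1=\varepsilon_2$ and $\varepsilon_1=-\varepsilon_2$ on complementary classes mod $2N$ (for $N$ even), and (ii) account for the finitely many terms where $\sgn(m_4)\neq\sgn(x_4)$, which is exactly the source of the $+2q^{-N/2}$ in \eqref{DF}. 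Your Bernoulli-polynomial asymptotics check out (including the cancellation of the constant term for $N$ even), modulo the small slip that only $F_{N+1,N}$, not both $F_{N\pm1,N}$, appears in the final decomposition.
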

\begin{proof}
	For $D_{N+2}$, $N \geq 2$, the graph is

\begin{figure}[h]
	\begin{center}
		\begin{tikzpicture}
		\node[shape=circle,fill=black, scale = 0.4] (1) at (-2.0,0) { };
		\node[shape=circle,fill=black, scale = 0.4] (2) at (-0.5,0) { };
		\node[shape=circle,fill=black, scale = 0.4] (3) at (1,0) { };
		\node[shape=circle,fill=black, scale = 0.4] (4) at (2,-1)  { };
		\node[shape=circle,fill=black, scale = 0.4] (5) at (2,1) { };
		\node[shape=circle,fill=black, scale = 0.4] (6) at (-3.5,0) { };
		
		\node[draw=none] (B1) at (-3.5,0.4) {$a_1$};
		\node[draw=none] (B2) at (-2,0.4) {$a_2$};
		\node[draw=none] (B22) at (-0.5,0.4) {$a_{N-1}$};
		\node[draw=none] (B3) at (0.85,0.4) {$a_{N}$};
		\node[draw=none] (B4) at (2.75,-1) {$a_{N+2}$};
		\node[draw=none] (B5) at (2.6,1) {$a_{N+1}$};
		
		\path [-] (6) edge node[left] {} (1);
		\path [dotted] (1) edge node[left] {} (2);
		\path [-] (2) edge node[left] {} (3);
		\path [-](3) edge node[left] {} (4);
		\path [-](3) edge node[left] {} (5);
		\end{tikzpicture}
	\end{center}
\caption{Labeled Dynkin diagram for $D_{N+2}$.} \label{F:DN+2}
\end{figure}
\noindent Thus we have the following matrix associated to $D_{N+2}$ (recalling that the Cartan matrix has $a_j = 2$ for all $j$)
$$
M_{N+2} := \left(
\begin{smallmatrix}
2 & -1 & & & & \\
-1 & 2 & - 1 & & & \\
& -1 & 2 & -1 & & & \\
& & & \ddots & & & \\
& & & & 2 & -1 & -1 \\
& & & & -1 & 2 & 0 \\
& & & & -1 & 0 & 2
\end{smallmatrix}
\right).
$$
By \eqref{E:Zq=CT}, we need to compute
\begin{equation}
\label{E:CT3star}
{\rm CT}_{\b w} \left(\Theta_{M}(\b w) \frac{\prod_{r\in\{1,N+1,N+2\}}\left(w_r-w_r^{-1}\right)}{w_N-w_N^{-1}}\right).
\end{equation}
It is known that $\det(M_{N+2}) = 4$ for all $N$, and that the adjucate matrix is (see Table 2 on page 295 of \cite{OV} for this and all other simple Lie algebras)
\begin{equation*}
{\rm adj}(M_{N+2}) =
\left(\begin{smallmatrix}
4 & 4 & 4 & \cdots & 4 & 2 & 2 \\
4 & 8 & 8 & \cdots & 8 & 4 & 4 \\
4 & 8 & 12 & \cdots & 12 & 6 & 6 \\
\vdots & \vdots & \vdots & \ddots & \vdots & \vdots & \vdots \\
4 & 8 & 12 & \cdots & 4N & 2N & 2N \\
2 & 4 & 6 & \cdots & 2N & N+2 & N \\
2 & 4 & 6 & \cdots & 2N & N & N+2
\end{smallmatrix}\right).
\end{equation*}
As explained in the proof of Lemma \ref{help}, if we write the theta function using  \eqref{E:Theta2}, we obtain the restriction ${\rm adj}(M_{N+2}) \b{m} \equiv 0 \pmod{4}.$

We evaluate the constant terms in $w_2, \dots, w_{N-1}$ in \eqref{E:CT3star}, which means that $m_2, \dots, m_{N-1}$ are set to zero. It is then clear that the system ${\rm adj}(M_{N+2}) \b{m} \equiv 0 \pmod{4}$ is equivalent to the restriction along the rows $1, N, N+1,$ and $N+2$. Thus we have $L {\boldsymbol \ell} \equiv 0 \pmod{4},$ where
\begin{equation*}
{\boldsymbol \ell} := (m_1, m_N, m_{N+1}, m_{N+2})\quad \textnormal{ and } \quad L
 := \left(\begin{smallmatrix}
4 & 4 & 2 & 2 \\
4 & 4N & 2N & 2N \\
2 & 2N & N+2 & N \\
2 & 2N & N & N+2
\end{smallmatrix}\right).
\end{equation*}

We first assume that $N$ is odd. Then the congruence conditions reduce to $m_{N+1} \equiv m_{N+2} \pmod{2}$ and $2m_1 + 2m_N + Nm_{N+1} + (N+2)m_{N+2} \equiv 0 \pmod{4}$.  Plugging the first equation into the second, we obtain $m_N \equiv m_1 + \frac12(m_{N+1} - m_{N+2}) \pmod{2}$. Thus evaluating the constant terms with respect to $w_1, w_{N+1}$, and $w_{N+2}$, \eqref{E:CT3star} becomes
\begin{equation}
\label{E:CTwN}
{\rm CT}_{w_N} \left(\sum_{\substack{m_1, m_{N+1}, m_{N+2} \in \{-1, 1\} \\ m_N \equiv m_1 + \frac{m_{N+1} - m_{N+2}}{2} \pmod{2}}} \mathcal{A}(\boldsymbol{\ell})w_N^{m_N}\right),
\end{equation}
where
\begin{align*}
\label{E:Q2DN}
Q_2(\boldsymbol{\ell}) &= \boldsymbol{\ell}^T \left(\tfrac14 L\right) \boldsymbol{\ell} \\\notag
 &= m_1^2 + N m_N^2 + \frac{N+2}{4} \left(m_{N+1}^2 + m_{N+2}^2\right)
+ m_N(2m_1 + Nm_{N+1} + Nm_{N+2}) \\\notag
& \hspace{9cm}  + m_1 (m_{N+1} + m_{N+2}) + \frac{N}{2} m_{N+1} m_{N+2}
\end{align*}
and
\begin{equation*}
\mathcal{A}(\boldsymbol{\ell} ) := -m_1 m_{N+1} m_{N+2} q^{\frac12 Q_2(\boldsymbol{\ell})} \frac{1}{w_N - w_N^{-1}}.
\end{equation*}
Note that $Q_2$ is symmetric in $m_{N+1}$ and $m_{N+2}$.

Since $\mathcal{A}(-\boldsymbol{\ell} ) = -\mathcal{A}(\boldsymbol{\ell} )$, if we group $(m_1, m_{N+1}, m_{N+2})$ with its negative, thus \eqref{E:CTwN} becomes
\begin{align*}
{\rm CT}_{w_N} & \left( \left( \sum_{\substack{m_1=m_{N+1}=m_{N+2}=-1\\ m_N \equiv 1 \pmod{2}}} \hspace{-1cm}
q^{\frac12 \left(Nm_N^2 - 2(N+1)m_N + N+4\right)} +
\sum_{\substack{m_1=-m_{N+1}=-m_{N+2}=-1 \\ m_N \equiv 1 \pmod{2}}} \hspace{-1cm}
q^{\frac12 \left(Nm_N^2 + 2(N-1)m_N + N \right)}  \right.  \right.\\
& \left.\left. \hspace{4.4cm} +
2 \sum_{\substack{m_1=-m_{N+1}=m_{N+2}=1 \\ m_N \equiv 0 \pmod{2}}}
q^{\frac12 \left(Nm_N^2 + 2m_N +\frac12 \right)}
\right) \frac{w_N^{m_N} - w_N^{-m_N}}{w_N - w_N^{-1}}\right),
\end{align*}
where the third sum is doubled to account for both $(m_1, m_{N+1}, m_{N+2}) = (1,-1,1)$ and $(1,1,-1)$.
By \eqref{E:CT=sgno}, the third sum does not contribute to the constant term, leaving only the contributions from the first two sums. Using \eqref{E:sgn=F}, we obtain
\begin{multline*}
\sum_{m \equiv 1 \pmod{2}} \sgn(m) q^{\frac12 \left(Nm^2 - 2(N+1)m + N+4 \right)} +
\sum_{m \equiv 1 \pmod{2}} \sgn(m) q^{\frac12 \left(Nm^2 + 2(N-1)m + N \right)} \\
= q^{1 - \frac{1}{2N}} F_{-1,N}(2N\tau) + q^{1 - \frac{1}{2N}} F_{2N-1,N}(2N\tau).
\end{multline*}
Including the additional $q^{-\frac{N+2}{2}}$ from \eqref{E:Zq=CT}, we therefore have the first expression in \eqref{DF}; the second expression in \eqref{DF} follows from \eqref{rel1} and \eqref{rel2}.

Recalling \eqref{E:Fasymp} yields the asymptotic behavior in \eqref{Das}.

Next suppose that $N$ is even. We renumber rows and columns to match the conventions of Proposition \ref{P:quantum}, letting $\boldsymbol{\ell} = (m_1, m_2, m_3, m_4)$, and
$$
A_{N+2} := \left(\begin{smallmatrix}
1 & \frac12 & \frac12 & 1 \\ \frac12 & \frac{N+2}{4} & \frac{N}{4} & \frac{N}{2} \\ \frac12 & \frac{N}{4} & \frac{N+2}{4} & \frac{N}{2} \\ 1 & \frac{N}{2} & \frac{N}{2} & N
\end{smallmatrix}\right).
$$
Inspecting the corresponding matrices gives that Corollary \ref{C:Z=ZA} may be used, and a short additional calculation shows that Proposition \ref{P:quantum} applies. Plugging in the entries of $A_{N+2}$ and employing \eqref{rel1}, \eqref{rel2}, and \eqref{E:Fasymp} then directly gives the claimed series and asymptotic expressions.
\end{proof}

\subsection{$E$-series}
For $\frak{g} = E_N$ ($N \in \{6,7,8\}$) we enumerate the vertices as in Figure \ref{F:EN}.

\begin{figure}[h]
\begin{center}

	\begin{tikzpicture}[thick, scale=0.8]
	\node[shape=circle,fill=black, scale = 0.4] (3) at (-1,0) { };
	\node[shape=circle,fill=black, scale = 0.4] (4) at (0,0) { };
	\node[shape=circle,fill=black, scale = 0.4] (5) at (1,0) { };
	\node[shape=circle,fill=black, scale = 0.4] (6) at (2,0)  { };
	\node[shape=circle,fill=black, scale = 0.4] (7) at (3,0) { };
	\node[shape=circle,fill=black, scale = 0.4] (8) at (1,1) { };

	\node[draw=none] (A1) at (-1,-0.4) {$a_1$};
	\node[draw=none] (A2) at (0,-0.4) {$a_{2}$};
		\node[draw=none] (A2) at (1,-0.4) {$a_{3}$};
	
	\node[draw=none] (A3) at (1,1.3) {$a_6$};
	\node[draw=none] (A4) at (2,-0.4) {$a_{4}$};
	\node[draw=none] (A4) at (3,-0.4) {$a_{5}$};
	
	\path [-](3) edge node[left] {} (4);
	
	\path [-] (4) edge node[left] {} (5);
	\path [-] (5) edge node[left] {} (6);
	\path [-] (5) edge node[left] {} (7);
	\path [-] (5) edge node[left] {} (8);
	\end{tikzpicture}
	\qquad
	\begin{tikzpicture}[thick, scale=0.8]
	\node[shape=circle,fill=black, scale = 0.4] (2) at (-2,0) { };
	\node[shape=circle,fill=black, scale = 0.4] (3) at (-1,0) { };
	\node[shape=circle,fill=black, scale = 0.4] (4) at (0,0) { };
	\node[shape=circle,fill=black, scale = 0.4] (5) at (1,0) { };
	\node[shape=circle,fill=black, scale = 0.4] (6) at (2,0)  { };
	\node[shape=circle,fill=black, scale = 0.4] (7) at (3,0) { };
	\node[shape=circle,fill=black, scale = 0.4] (8) at (1,1) { };
	
	\node[draw=none] (A1) at (-2,-0.4) {$a_1$};
	\node[draw=none] (A1) at (-1,-0.4) {$a_2$};
	\node[draw=none] (A1) at (0,-0.4) {$a_3$};
	\node[draw=none] (A2) at (1,-0.4) {$a_{4}$};
	\node[draw=none] (A3) at (1,1.3) {$a_7$};
	\node[draw=none] (A4) at (2,-0.4) {$a_{5}$};
	\node[draw=none] (A4) at (3,-0.4) {$a_{6}$};

	\path [-](2) edge node[left] {} (3);
	\path [-](3) edge node[left] {} (4);
	
	\path [-] (4) edge node[left] {} (5);
	\path [-] (5) edge node[left] {} (6);
	\path [-] (5) edge node[left] {} (7);
	\path [-] (5) edge node[left] {} (8);
	\end{tikzpicture}
\qquad
\begin{tikzpicture}[thick, scale=0.8]
	\node[shape=circle,fill=black, scale = 0.4] (1) at (-3,0) { };
	\node[shape=circle,fill=black, scale = 0.4] (2) at (-2,0) { };
	\node[shape=circle,fill=black, scale = 0.4] (3) at (-1,0) { };
	\node[shape=circle,fill=black, scale = 0.4] (4) at (0,0) { };
	\node[shape=circle,fill=black, scale = 0.4] (5) at (1,0) { };
	\node[shape=circle,fill=black, scale = 0.4] (6) at (2,0)  { };
	\node[shape=circle,fill=black, scale = 0.4] (7) at (3,0) { };
	\node[shape=circle,fill=black, scale = 0.4] (8) at (1,1) { };

	\node[draw=none] (A1) at (-3,-0.4) {$a_1$};
	\node[draw=none] (A1) at (-2,-0.4) {$a_2$};
	\node[draw=none] (A1) at (-1,-0.4) {$a_3$};
	\node[draw=none] (A1) at (0,-0.4) {$a_4$};	
	\node[draw=none] (A2) at (1,-0.4) {$a_{5}$};
	\node[draw=none] (A3) at (1,1.3) {$a_8$};
	\node[draw=none] (A4) at (2,-0.4) {$a_{6}$};
	\node[draw=none] (A4) at (3,-0.4) {$a_{7}$};

	\path [-] (1) edge node[left] {} (2);
	\path [-](2) edge node[left] {} (3);
	\path [-](3) edge node[left] {} (4);
	
	\path [-] (4) edge node[left] {} (5);
	\path [-] (5) edge node[left] {} (6);
	\path [-] (5) edge node[left] {} (7);
	\path [-] (5) edge node[left] {} (8);
	\end{tikzpicture}

\end{center}
\caption{Dynkin diagrams for $E_6, E_7,$ and $E_8$.} \label{F:EN}
\end{figure}

\normalsize
By \eqref{E:Zq=CT}, we need to compute
\begin{align}
\label{E:ZE}
&\text{CT}_{\b w}\left(\Theta_M(\b w)\frac{\prod_{r\in\{1,N-1,N\}}\left(w_r-w_r^{-1}\right)}{w_{N-3}-w_{N-3}^{-1}}\right).
\end{align}
We treat each case separately.

\subsubsection{$E_6$} In the first case, we use Theorem \ref{T:Zqquantum} in order to calculate the invariant series.
\begin{proposition}
\label{P:E6}
	We have
	\begin{align}\label{ZTE6}
	Z_{E_6}(q) &= q^{-\frac{25}{12}} \left(F_{-1,6}(12 \tau)+ F_{7,6}(12\tau)\right) = -q^{-2}\sum_{m\equiv 1,5\pmod{12}}\sgn(m)q^{\frac{m^2-1}{12}}+2q^{-2}.
	\end{align}
	In particular $q^{\frac{25}{144}}Z_{E_6}(q^{\frac{1}{12}})$ is a quantum modular form with quantum set $\Q$.
	
	As $t \to 0^+$, we have
\begin{align}\label{asE6}
	Z_{E_6}\left(e^{-2\pi t}\right) \sim 1.
\end{align}
\end{proposition}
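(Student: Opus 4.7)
The plan is to apply the strategy of the proof of Theorem~\ref{T:Zqquantum} directly to the $E_6$ Dynkin diagram, with the labeling from Figure~\ref{F:EN} in which the central (degree-$3$) node is vertex $3$ and the leaves are vertices $1,5,6$. Let $M$ denote the associated Cartan matrix, so $\det(M)=3$, and Proposition~\ref{P:Z3star} gives
\[
Z_{E_6}(q) \;=\; q^{-3}\,\mathrm{CT}_{\b w}\!\left(\Theta_M(\b w)\,\frac{(w_1-w_1^{-1})(w_5-w_5^{-1})(w_6-w_6^{-1})}{w_3-w_3^{-1}}\right),
\]
since $\tfrac{-3N+\sum a_\nu}{2}=\tfrac{-18+12}{2}=-3$. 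The first step is to evaluate the constant terms in $w_2$ and $w_4$ (the two internal leg nodes), which forces $m_2=m_4=0$ in the theta expansion.

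Next, I would write down $\mathrm{adj}(M)$ explicitly (from the tables in \cite{OV}) and use Lemma~\ref{help} to phrase the lattice condition $\b m\in M\Z^6$ as a congruence $\mathrm{adj}(M)\b m\equiv \b 0\pmod{3}$. Restricting to $(m_1,m_3,m_5,m_6)$, I would verify that the relevant congruence is captured by the $4\times 4$ submatrix $A$ of $\mathrm{adj}(M)$ on rows and columns $\{1,3,5,6\}$, namely the hypothesis $\Omega=\Omega_A$ of Corollary~\ref{C:Z=ZA}. This gives $Z_{E_6}(q)=q^{-3}Z_A(q)$, where $A$ has integer entries (so Proposition~\ref{P:quantum}'s hypothesis is trivially satisfied) and $a_{44}=6$, matching the desired level $12$.

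Applying Proposition~\ref{P:quantum} and~\eqref{Zfalse}, the output is a sum of four false theta terms $q^{d_j}F_{6-b_j,6}(12\tau)$. The key calculation is to evaluate the parameters $b_j,c_j,d_j$ from~\eqref{defineb}: I expect that, modulo $12$, the residues $6-b_j$ pair up and the four summands consolidate into exactly $F_{-1,6}(12\tau)+F_{7,6}(12\tau)$ with a single overall $q$-shift $q^{11/12}$. Combined with the $q^{-3}$ prefactor this yields $q^{-25/12}$, proving the first equality in~\eqref{ZTE6}; the second equality then follows from~\eqref{rel1} and~\eqref{rel2}. Quantum modularity of $q^{25/144}Z_{E_6}(q^{1/12})$ with quantum set $\Q$ is immediate from Propositions~\ref{quantum-12} and~\ref{quantum_set} applied to $F_{-1,6}$ and $F_{7,6}$.

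For the asymptotic~\eqref{asE6}, I would apply~\eqref{E:Fasymp} with $B_1(x)=x-\tfrac12$ to get $F_{-1,6}(12it)\sim -2B_1\!\left(-\tfrac{1}{12}\right)=\tfrac{7}{6}$ and $F_{7,6}(12it)\sim -2B_1\!\left(\tfrac{7}{12}\right)=-\tfrac{1}{6}$, whose sum is $1$; since the prefactor $q^{-25/12}\to 1$ as $t\to 0^+$, the leading constant is $1$. The main technical obstacle is the verification that $\Omega=\Omega_A$, together with the bookkeeping of the four summands in~\eqref{Zfalse} so that they consolidate into the claimed two-term expression rather than producing spurious contributions from other residue classes mod $12$.
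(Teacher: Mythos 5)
Your setup (Proposition \ref{P:Z3star}, evaluating the constant terms in $w_2,w_4$, passing to the congruence ${\rm adj}(M)\b m\equiv \b 0\pmod 3$ via Lemma \ref{help}, and checking $\Omega=\Omega_A$) matches the paper, and your asymptotic computation via \eqref{E:Fasymp} is correct. But the central step of your plan --- applying Proposition \ref{P:quantum} and expecting its four summands to ``consolidate'' into two --- fails. The quadratic-form matrix here is $A=\tfrac13 L$ with $L$ the $\{1,3,5,6\}$-submatrix of ${\rm adj}(M)$, so $A$ does \emph{not} have integer entries (its $(1,1)$-entry is $\tfrac43$); if you instead take the integer matrix $L$ itself you get $a_{44}=18$, not $6$, and the wrong quadratic form. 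With $A=\tfrac13L$ the hypothesis of Proposition \ref{P:quantum} is genuinely violated: the residual congruence is $m_1\equiv m_5\pmod 3$, which for $m_1,m_5\in\{\pm1\}$ forces $m_5=m_1$, so only half of the eight sign combinations lie in the lattice $\Z^4\cap A^{-1}\Z^4$. This is exactly why Corollary \ref{C:Z=ZA} applies to $E_6$ while Proposition \ref{P:quantum} does not.

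One can check that your proposed mechanism cannot be repaired by cancellation: computing the parameters \eqref{defineb} for $A=\tfrac13 L$ gives $(b_1,b_2,b_3,b_4)=(7,-3,-1,-3)$ and $d_j=\tfrac{11}{12}$ for all $j$, so the four-term formula \eqref{Zfalse} would output
\begin{equation*}
q^{\frac{11}{12}}\left(F_{-1,6}(12\tau)+F_{7,6}(12\tau)+2F_{9,6}(12\tau)\right),
\end{equation*}
and $F_{9,6}$ is not identically zero (the exponents $\left(m+\tfrac34\right)^2$ do not pair off). The spurious $2F_{9,6}$ term is precisely the contribution of the sign combinations with $m_1\neq m_5$, which the congruence excludes. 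The correct argument, as in the paper, is to impose $m_5=m_1$ \emph{before} summing: only the two families $(m_1,m_5,m_6)=\pm(1,1,1)$ and $\pm(1,1,-1)$ survive, producing exactly $\sum_{m\ \mathrm{odd}}\sgn(m)\left(q^{3m^2-7m+5}+q^{3m^2+m+1}\right)$ and hence, via \eqref{E:sgn=F}, the two terms $q^{\frac{11}{12}}\left(F_{-1,6}(12\tau)+F_{7,6}(12\tau)\right)$; multiplying by the prefactor $q^{-3}$ then gives \eqref{ZTE6}. The remainder of your argument (quantum modularity via Propositions \ref{quantum-12} and \ref{quantum_set}, and the asymptotic $\tfrac76-\tfrac16=1$) goes through once this is fixed.
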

\begin{proof}
We have the matrix
$$
M := \left( \begin{smallmatrix} 2&-1&0&0&0&0\\ -1&2&-1&0&0&0\\ 0&-1&2&-1&0&-1\\ 0&0&-1&2&-1&0\\ 0&0&0&-1&2&0\\ 0&0&-1&0&0&2
\end{smallmatrix} \right),
$$
which has $\det(M) = 3$, and the adjucate matrix
$$
{\rm adj}(M) = \left( \begin{smallmatrix} 4&5&6&4&2&3\\ 5&10&12&8&4&6\\ 6&12&18&12&6&9\\ 4&8&12&10&5&6\\ 2&4&6&5&4&3\\ 3&6&9&6&3&6\end{smallmatrix} \right).
$$

Proceeding as in the proof of Theorem \ref{T:Zqquantum}, we evaluate the constant terms in all variables except $w_3$ in \eqref{E:ZE}, obtaining (taking into account the different vertex labeling in \eqref{E:CTw4})
\begin{equation*}
Z_{E_6}(q) = -q^{-\frac32}\operatorname{CT}_{w_3}\left(\sum_{\substack{m_1,m_5,m_6\in\{\pm 1\} \\ \b m = (m_1,0,m_3,0,m_5,m_6)^T\in M\Z^6}} m_1m_5m_6 q^{Q_2(\b m)}\frac{w_3^{m_3}}{w_3-w_3^{-1}}\right).
\end{equation*}
We observe that Corollary \ref{C:Z=ZA} applies (although Proposition \ref{P:Z3star} does not), since if $m_2 = m_4 = 0$, the congruence restriction ${\rm adj}(M) \b{m} \equiv 0 \pmod{3}$ is equivalent to $L \b{m} \equiv 0 \pmod{3}$, where
$$
\boldsymbol{\ell
} := (m_1, m_3, m_5, m_6)\quad \textnormal{ and }\quad L := \left( \begin{smallmatrix} 4 & 6 & 2 & 3 \\ 6 & 18 & 6 & 9 \\ 2 & 6 & 4 & 3 \\ 3 & 9 & 3 & 6 \end{smallmatrix} \right).
$$
We may therefore suppress $m_2$ and $m_4$.
This system of congruences further reduces to the single restriction $m_1 \equiv m_5 \pmod{3}$, and with $m_1, m_5, m_6 \in \{\pm 1\}$, this implies $m_5 = m_1$. Thus
\begin{equation*}
Z_{E_\ell}(q) = -q^{-\frac32} {\rm CT}_{w_3}\left( \sum_{\substack{m_1=m_5, m_6 \in \{-1, 1\} \\ m_3 \in \Z}} m_6
q^{\frac12 Q_2(\boldsymbol{\ell})} \frac{w_3^{m_3}}{w_3 - w_3^{-1}}\right),
\end{equation*}
where
\begin{align*}
Q_2(\boldsymbol{\ell}) & := \boldsymbol{\ell}^T \left(\tfrac13 L\right) \boldsymbol{\ell}.
\end{align*}
Setting $m_5=m_1$, this simplifies to
\begin{align*}
Q_2(\boldsymbol{\ell}) & = 6m_3^2 + (8m_1 + 6m_6)m_3 + 4m_1 m_6 + 6.
\end{align*}

Changing for the terms $m_1=m_6=1$ and $m_1=-m_6=-1$ $m_3$ into $-m_3$, we obtain
\begin{align*}
{\rm CT}_{w_3} & \left( \left(\sum_{\substack{m_1=m_6=-1\\ m_3 \in \Z}} q^{\frac12 \left(6m_3^2 - 14m_3 + 10\right)}
+\sum_{\substack{m_1=-m_6=1\\ m_3 \in \Z}} q^{\frac12 \left(6m_3^2 + 2m_3 + 2\right)} \right) \frac{w_3^{m_3} - w_3^{-m_3}}{w_3 - w_3^{-1}}\right) \\
& = \sum_{m \equiv 1 \pmod{2}} \sgn(m) \left(q^{3m^2 - 7m + 5} + q^{3m^2 + m + 1}\right),
\end{align*}
using \eqref{E:CT=sgno}.
Using \eqref{E:sgn=F}, this gives the first expression in \eqref{ZTE6}; the second expression follows from \eqref{rel1} and \eqref{rel2}.

The asymptotic formula \eqref{asE6} may be concluded from \eqref{E:Fasymp}.
\end{proof}

\subsubsection{$E_7$}

For $E_7$ and $E_8$ we use Proposition \ref{P:quantum} and Corollary \ref{C:Z=ZA}.
\begin{proposition}
\label{P:E7}
We have
\begin{equation}
\label{ZE7}
\begin{aligned}
Z_{E_7}(q) &=q^{-\frac{61}{24}}\left(F_{-1,12}(24\tau)+ F_{19,12}(24\tau) + F_{17,12}(24\tau) + F_{13,12}(24\tau)\right) \\
& = -q^{-\frac52}\sum_{m\equiv 1,5,7,11\pmod{24}}\sgn(m)q^{\frac{m^2-1}{24}}+2q^{-\frac52}.
\end{aligned}
\end{equation}
In particular $q^{\frac{61}{576}} Z_{E_7}(q^{\frac{1}{24}})$ is a quantum modular form.

As $t \to 0^+$, we have
\begin{align}\label{as7}
	Z_{E_7}\left(e^{-2\pi t}\right) \sim -4\pi t.
\end{align}
\end{proposition}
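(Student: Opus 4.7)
The plan is to mimic the proof of Proposition \ref{P:E6}, this time using the Cartan matrix $M = M_{E_7}$, which satisfies $\det(M) = 2$. First, I will write out $M$ and $\operatorname{adj}(M)$ using the vertex labeling in Figure \ref{F:EN} and start from \eqref{E:ZE} with $N=7$. Evaluating the constant terms in the degree-two interior variables $w_2, w_3, w_5$ forces $m_2 = m_3 = m_5 = 0$ via \eqref{E:CT1/w-w^-1}, so the summation in the theta function $\Theta_M$ (see \eqref{E:Theta2}) reduces to $\b m \in M\Z^7$ supported on the four coordinates $\{1,4,6,7\}$, i.e. the three leaves together with the central vertex $a_4$.

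Next, applying \eqref{E:CTw-w^-1} in the leaf variables $w_1, w_6, w_7$ restricts $m_1, m_6, m_7$ to $\{\pm 1\}$. I will then analyze the condition $\b m \in M\Z^7$, equivalently $\operatorname{adj}(M)\b m \equiv 0 \pmod{2}$, by directly inspecting the $\{1,4,6,7\}$-submatrix of $\operatorname{adj}(M)$ modulo $2$; this will determine which of the eight triples $(m_1, m_6, m_7) \in \{\pm 1\}^3$ yield a nonzero contribution and what parity condition they place on $m_4$. Pairing $(m_1, m_4, m_6, m_7)$ with its negative and applying \eqref{E:CT=sgno} in the central variable $w_4$ will produce a sum of four expressions of the form $\sum_{m \equiv 1 \pmod{2}}\sgn(m) q^{\frac12 a m^2 - b_j m + c_j}$, one per surviving sign class, each of which is converted into a false theta function by \eqref{E:sgn=F}. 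Combining with the prefactor $q^{(-3N + \sum a_\nu)/2} = q^{-7/2}$, the calculation will yield $a = 12$ and the four residues $a - b_j \in \{-1, 13, 17, 19\}$ that appear in the first line of \eqref{ZE7}. The second line follows from \eqref{rel1} and \eqref{rel2}.

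For the asymptotic \eqref{as7}, I will substitute the first expression of \eqref{ZE7} into \eqref{E:Fasymp} with $p = 12$. The constant contribution vanishes because $\sum_j B_1((a - b_j)/(2p)) = 0$ over the four residue classes, so the leading behavior comes from the $B_3$-terms, a short computation of which will produce the coefficient $-4\pi$. The main obstacle is the combinatorial bookkeeping of the modulo-$2$ congruence $\operatorname{adj}(M_{E_7})\b m \equiv 0$ interacting with the leaf restrictions $m_1, m_6, m_7 \in \{\pm 1\}$: one must track precisely which four sign-choice classes $(m_1, m_6, m_7)$ survive, what parity of $m_4$ accompanies each, and how these four surviving classes pair up with the four residue classes modulo $24$ in the final statement. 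Beyond this bookkeeping the proof introduces no new analytic ingredient relative to Propositions \ref{P:DN+2} and \ref{P:E6}.
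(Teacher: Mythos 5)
Your proposal is correct and follows essentially the same route as the paper: after killing the interior variables $w_2,w_3,w_5$, the mod-$2$ congruence ${\rm adj}(M)\b m\equiv 0$ reduces to conditions on the $\{1,4,6,7\}$-minor (in fact it imposes no restriction beyond $m_1,m_6,m_7\in\{\pm1\}$, so all eight sign triples survive and pair into the four classes giving $a=12$ and $a-b_j\in\{-1,13,17,19\}$), exactly as in the paper. The only cosmetic difference is that the paper short-circuits the hand computation by invoking Corollary \ref{C:Z=ZA} and Proposition \ref{P:quantum} (and reads off the asymptotic from \eqref{AsZA} rather than recomputing the $B_1$ and $B_3$ contributions via \eqref{E:Fasymp}), whereas you redo the constant-term extraction directly as in the $E_6$ case; both yield the same result.
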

\begin{proof}
We have the matrix
$$
M := \left(\begin{smallmatrix} 2&-1&0&0&0&0&0\\ -1&2&-1&0&0&0&0\\ 0&-1&2&-1&0&0&0\\ 0&0&-1&2&-1&0&-1\\ 0&0&0&-1&2&-1&0\\ 0&0&0&0&-1&2&0\\ 0&0&0&-1&0&0&2
\end{smallmatrix}\right),
$$
which has $\det(M) = 2$, and the adjucate matrix
$$
{\rm adj}(M) = \left(\begin{smallmatrix} 3&4&5&6&4&2&3\\ 4&8&10&12&8&4&6\\ 5&10&15&18&12&6&9\\ 6&12&18&24&16&8&12\\ 4&8&12&16&12&6&8\\ 2&4&6&8&6&4&4\\ 3&6&9&12&8&4&7\end{smallmatrix}\right).
$$
We can immediately evaluate the constant terms in $w_2, w_3$ and $w_5$ in \eqref{E:ZE}, and the congruence restriction ${\rm adj}(M) \b{m} \equiv 0 \pmod{2}$ reduces to $L \boldsymbol{\ell} \equiv 0 \pmod{2}$, where
$$
\boldsymbol{\ell} := (m_1, m_4, m_6, m_7)\quad \textnormal{ and } \quad L := \left(\begin{smallmatrix} 3 & 6 & 2 & 3 \\ 6 & 24 & 8 & 12 \\ 2 & 8 & 4 & 4 \\ 3 & 12 & 4 & 7 \end{smallmatrix}\right).
$$
This implies that Corollary \ref{C:Z=ZA} applies. Swapping the second and fourth rows, we then use Proposition \ref{P:quantum} with
$$
A := \left(\begin{smallmatrix} \frac32 & 1 & \frac32 & 3 \\ 1 & 2 & 2 & 4 \\ \frac32 & 2 & \frac72 & 6 \\ 3 & 4 & 6 & 12 \end{smallmatrix}\right),
$$
and a short calculation gives the first expression in \eqref{ZE7}; the second expression follows from \eqref{rel1} and \eqref{rel2}.

The asymptotic behavior \eqref{as7} may be concluded  from \eqref{AsZA}.
\end{proof}

\subsubsection{$E_8$} Note that $\widehat{Z}_{0}(q)$ for this graph was also calculated numerically in \cite[(3.155)]{GPPV}, and is our only explicit example with a unimodular matrix.

\begin{proposition}
\label{P:E8}
We have
\begin{equation}
\label{ZE8}
	\begin{aligned}
		Z_{E_8}(q) &=q^{-\frac{181}{60}} \left(F_{-1,30} (60 \tau) + F_{49,30}(60\tau) + F_{41,30}(60\tau)+F_{31,30}(60\tau) \right) \\
		& = -q^{-3} \sum_{m \equiv 1, 11, 19, 29 \pmod{60}} \sgn(m) q^{\frac{m^2 - 1}{60}} + 2q^{-3}.
	\end{aligned}
\end{equation}
	In particular $q^{\frac{181}{3600}} Z_{E_8}(q^{\frac{1}{60}})$ is a quantum modular form.
	
	As $t \to 0^+$, we have
	\begin{align}\label{as8}
		Z_{E_8}\left(e^{-2\pi t}\right) \sim -8 \pi t.
	\end{align}
\end{proposition}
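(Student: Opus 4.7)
The plan is to follow the same strategy used in the proof of Proposition \ref{P:E7}, exploiting the fact that $E_8$ has a unimodular Cartan matrix so that all congruence conditions coming from Lemma \ref{help} are trivial and Corollary \ref{C:Z=ZA} applies cleanly. This case is in fact the simplest on the conceptual side since $\det(M)=1$, although the explicit entries of $M^{-1}={\rm adj}(M)$ are larger than in the previous cases.

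First, I would write down the $E_8$ Cartan matrix $M$ using the vertex labeling of Figure \ref{F:EN}, observe that $\det(M)=1$, and record the entries of $M^{-1}$ (which can be read off from the standard tables in \cite{OV}). Starting from \eqref{E:ZE} with $N=8$, I would evaluate the constant terms in the four ``internal'' variables $w_2, w_3, w_4, w_6$ using \eqref{E:CTw-w^-1}, thereby forcing $m_2=m_3=m_4=m_6=0$ in the theta sum. Since $M$ is unimodular, the set $\Omega$ defined before Corollary \ref{C:Z=ZA} equals $\mathcal{V}$, and hence $\Omega = \Omega_A$ automatically. This reduces the computation of $Z_{E_8}(q)$ (up to a fixed $q$-power) to the evaluation of $Z_A(q)$ for the $4\times 4$ matrix $A$ obtained by extracting the rows and columns of $M^{-1}$ indexed by $\{1,5,7,8\}$, arranged so that the central (degree-$3$) vertex $5$ occupies the fourth position to match the conventions of \eqref{E:ZAq} and Proposition \ref{P:quantum}.

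Next I would apply Proposition \ref{P:quantum} to this matrix $A$. A short check (identical in form to the one in the proofs of Propositions \ref{P:DN+2} and \ref{P:E7}) confirms the hypothesis $A\b{m}\in\Z^4$ for all $\b m\in\Z^4$ with $m_1,m_2,m_3\in\{\pm 1\}$ and $m_4$ odd, since the rows of $M^{-1}$ are integral. Plugging the numerical values of the $a_{jk}$ into the definitions \eqref{defineb} of $b_j$, $c_j$, and $d_j$ produces the four false theta functions $F_{-1,30}(60\tau)$, $F_{49,30}(60\tau)$, $F_{41,30}(60\tau)$, $F_{31,30}(60\tau)$ appearing in \eqref{ZE8}. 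In particular this yields $a_{44}=30$ (the dual Coxeter number of $E_8$), which is exactly what produces the modulus $2p=60$ and the residues $\{1,11,19,29\}$ listed among the exponents of $E_8$. The second expression in \eqref{ZE8} then follows from the identities \eqref{rel1} and \eqref{rel2}, after combining the $n=1$ contributions of the four sums into the $2q^{-3}$ term. Quantum modularity of $q^{\frac{181}{3600}}Z_{E_8}(q^{\frac{1}{60}})$ with quantum set $\Q$ follows from Propositions \ref{quantum-12} and \ref{quantum_set}.

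Finally, for the asymptotic statement \eqref{as8} as $t\to 0^+$, I would directly apply \eqref{AsZA} with the entries of $A$. The expected value $-8\pi t$ corresponds to the identity
\[
a_{44}\bigl(a_{12}a_{34}+a_{13}a_{24}+a_{14}a_{23}\bigr)-2a_{14}a_{24}a_{34}=a_{44}^2,
\]
which I would verify by direct substitution of the numerical entries of $A$. The main (modest) obstacle in the argument is purely bookkeeping: correctly computing the required entries of $M^{-1}$ for $E_8$ and verifying this numerical identity; there is no new conceptual input beyond what was already developed for $D_{N+2}$, $E_6$, and $E_7$.
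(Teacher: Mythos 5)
Your proposal is correct and follows essentially the same route as the paper: reduce via Corollary \ref{C:Z=ZA} (trivially applicable since $\det(M)=1$) to the $4\times4$ minor of $M^{-1}$ along rows and columns $1,5,7,8$ with the central vertex placed last, apply Proposition \ref{P:quantum} to get the four false theta functions, and use \eqref{rel1}, \eqref{rel2}, and \eqref{AsZA} for the second expression and the asymptotics; your numerical identity $a_{44}(a_{12}a_{34}+a_{13}a_{24}+a_{14}a_{23})-2a_{14}a_{24}a_{34}=a_{44}^2$ indeed checks out ($2700-1800=900$). The only minor imprecision is that the $2q^{-3}$ term arises solely from the $F_{-1,30}$ summand via \eqref{rel1}, not from all four sums.
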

\begin{proof}
The Cartan matrix is
$$
M := \left(\begin{smallmatrix}
2 &  -1 & 0 & 0 & 0 & 0 & 0 & 0 \\
-1 & 2 & -1 & 0 & 0 & 0 & 0 & 0 \\
0 & -1 & 2 & -1 & 0 & 0 & 0 & 0 \\
0 & 0 & -1 & 2 & -1 & 0 & 0 & 0 \\
0 & 0 & 0 & -1 & 2 & -1 & 0 & -1 \\
0 & 0 & 0 & 0 & -1 & 2 & -1 & 0 \\
0 & 0 & 0 & 0 & 0 & -1 & 2 & 0 \\
0 & 0 & 0 & 0 & -1 & 0 & 0 & -1
\end{smallmatrix}\right),
$$
which has $\det(M) = 1$ (thus Corollary \ref{C:Z=ZA} and Proposition \ref{P:quantum} are guaranteed to apply), and
$$
{\rm adj}(M) = M^{-1}=\left(
\begin{smallmatrix} 2 &  3 & 4 & 5 & 6 &  4 &  2 &  3 \\
3 &  6 & 8 & 10 & 12 & 8 & 4 & 6  \\
4 & 8 & 12 & 15 & 18 & 12 & 6 & 9  \\
5 &10 & 15 & 20 & 24 & 16 & 8 & 12 \\
6 &12 & 18 & 24 & 30 & 20 & 10 & 15 \\
4 & 8  & 12 &  16 &  20 &  14 &  7 &  10 \\
2 &  4 &  6 & 8 &  10 &  7 & 4 &  5\\
3 &  6 &  9 & 12 & 15 &  10 &  5 &  8
\end{smallmatrix}
\right).
$$

Extracting the constant terms with respect to $w_2,w_3,w_4,$ and $w_6$ in \eqref{E:ZE}, we are left with the minor along rows $1, 5, 7,$ and $8$ of $M^{-1}$. Further rearranging these rows and columns, we now apply Proposition \ref{P:quantum} with
$$
A := \left( \begin{smallmatrix} 2 & 2 & 3 & 6 \\ 2 & 4 & 5 & 10 \\ 3 & 5 & 8 & 15 \\ 6 & 10 & 15 & 30 \end{smallmatrix} \right).
$$
This directly gives the first expression in \eqref{ZE8}; the second identity follows from \eqref{rel1} and \eqref{rel2}.

The asymptotics in \eqref{AsZA} implies \eqref{as8}.
\end{proof}

\begin{remark}
Lawrence and Zagier studied a very similar series in \cite{LZ}, namely
\begin{equation*}
A(q) := \sum_{m\geq 1} \chi_+(m) q^{\frac{m^2-1}{120}}
\end{equation*}
with
\begin{equation*}
\chi_+(m) := \begin{cases}
1\quad &\textnormal{ if } m\equiv 1,11,19,29\pmod{60},\\
-1\quad &\textnormal{ if } m\equiv 31,41,49,59\pmod{60}.
\end{cases}
\end{equation*}	
\end{remark}

Comparing to \eqref{ZE8}, we see that $q^3 Z_{E_8}(q) = 2 - A(q^2)$. Furthermore, Theorem 2 of \cite{LZ} shows that $2-A(q)$ is also the rescaled WRT-invariant of the Poincar\'{e} homology sphere.

\section{General leg star graphs and the proof of Theorem \ref{T:main} \ref{T:main:nleg}}\label{genstar}

In this section we extend the ideas used for $3$-leg star graphs in order to prove the quantum modularity of $Z(q)$ for $(\ell-1)$-leg star graphs with $\ell \geq 4$ (recall Figure \ref{F:ellstar}).

\subsection{Quantum modularity for $(\ell-1)$-leg star graphs.}

The main result of this section proves Theorem \ref{T:main} \ref{T:main:nleg} by giving a more precise description of the quantum modular forms that arise in $Z(q)$ for $(\ell-1)$-leg star graphs.

\begin{theorem} \label{n-star} For any $(\ell-1)$-leg star graph, with $\ell \geq 4$, $Z(q)$ is a linear combination of derivatives of (up to $q$-powers) quantum modular forms of weight $\frac12$ or $\frac32$.
\end{theorem}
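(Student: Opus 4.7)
The plan is to extend the proof of Theorem \ref{T:Zqquantum} to accommodate the higher-order pole at the central vertex of an $(\ell-1)$-leg star graph. For such a graph, combining $g(w_\ell)$ with the $\ell-1$ edge factors incident to the central vertex yields the $w_\ell$-dependence $1/(w_\ell - w_\ell^{-1})^{\ell-3}$, while each leaf still contributes a simple factor $(w_j - w_j^{-1})$. First, I would generalize Proposition \ref{P:Z3star} by showing that the PV integral still reduces to a constant term. After integrating out all variables except $w_\ell$, the resulting integrand $F(w_\ell) := h(w_\ell)/(w_\ell(w_\ell - w_\ell^{-1})^{\ell-3})$ satisfies the functional equation $F(w_\ell^{-1}) = w_\ell^{2}\, F(w_\ell)$; this follows from $\Theta_M(\b{w}^{-1}) = \Theta_M(\b{w})$ together with the parities of the $\ell-1$ leaf factors and of $(w_\ell - w_\ell^{-1})^{\ell-3}$ combining to give an overall sign of $+1$. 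A short Laurent expansion around $w_\ell = \pm 1$ shows that this functional equation forces the coefficients of $(w_\ell \mp 1)^{-1}$ to vanish, extending Lemma \ref{L:removable} to this setting.

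Next, the constant terms at the interior nodes (forcing $m_j = 0$ for degree-$2$ vertices) and at the leaves (forcing $m_j = \pm 1$) are evaluated exactly as in the proof of Theorem \ref{T:Zqquantum}. For the central vertex, Corollary \ref{C:CTn} with $d = \ell - 3$ reduces the constant term of $w_\ell^{m_\ell}/(w_\ell - w_\ell^{-1})^{\ell-3}$ to the polynomial factor
\begin{equation*}
p(m_\ell) := \frac{\left(\tfrac{m_\ell-\ell+3}{2}+1\right)_{\ell-4}}{(\ell-4)!}
\end{equation*}
of degree $\ell - 4$ in $m_\ell$ times a simple-pole constant term. Using the symmetry $p(-m) = (-1)^{\ell-4} p(m)$ from \eqref{E:Poch}, pairing $\b{m}$ with $-\b{m}$ as in \eqref{E:CTw4=j}, and splitting the summation into congruence classes modulo $D := \det(M)$ via Lemma \ref{help}, one expresses $Z(q)$ as a finite linear combination of series of the form
\begin{equation*}
\sum_{\substack{m \equiv g \pmod{D} \\ m \equiv \ell - 1 \pmod{2}}} p(m)\,\sgn(m)\,q^{\frac12 a m^2 + b m + c}
\end{equation*}
for $a,b,c \in \Q$, plus a finite sum of rational $q$-powers arising from the range restrictions in the Pochhammer reduction.

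Finally, substituting $m = D n + g$, completing the square in the exponent to $q^{A(n+\beta)^2 + C}$, and expanding $p(Dn+g)$ as a polynomial in $(n+\beta)$ writes each such series (up to the rescaling $\tau \mapsto A\tau$, handled by Proposition \ref{quantum_set}) as a $\Q$-linear combination of
\begin{equation*}
\sum_{n \in \Z} (n+\beta)^k\,\sgn(n+\beta)\,q^{A(n+\beta)^2},
\end{equation*}
plus a finite sum of rational $q$-powers coming from the discrepancy between the shift of the sign function and $\beta$ (both rational, hence agreeing outside finitely many indices). For even $k$ this sum equals $\left(\tfrac{1}{2\pi i}\partial_\tau\right)^{k/2}$ applied to (a constant multiple of) $F_{j,p}(\tau)$, and for odd $k$ it equals $\left(\tfrac{1}{2\pi i}\partial_\tau\right)^{(k-1)/2}$ applied to $G_{j,p}(\tau)$, for appropriate parameters $j,p$. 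Since differentiation preserves quantum modularity under the real-analytic convention adopted in \eqref{errormod}, Propositions \ref{quantum-12} and \ref{quantum-32} then yield the desired decomposition.

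\emph{Main obstacle.} The principal bookkeeping challenge is tracking the Pochhammer polynomial $p$ through the substitution $m = Dn + g$ and verifying that the mismatch between the sign-function shift produced by the Pochhammer reduction and the square-completion shift $\beta$ contributes only a finite $q$-series (which is trivially quantum modular). The extension of Lemma \ref{L:removable} to higher-order poles, by contrast, is a short Laurent-series matching calculation.
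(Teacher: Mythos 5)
Your proposal is correct and follows essentially the same route as the paper's proof: reduce the principal value integral to a constant term, evaluate the leaf and interior-node constant terms, convert the order-$(\ell-3)$ pole at the central vertex into a Pochhammer polynomial times a simple pole via Corollary \ref{C:CTn}, split the remaining sum into congruence classes modulo $D$ using Lemma \ref{help}, and decompose the resulting polynomial-twisted sign sums into derivatives of $F_{j,p}$ and $G_{j,p}$ according to the parity of the power of $(n+\beta)$, with the shift discrepancy absorbed into a finite $q$-series. The one place you genuinely deviate is the removable-singularity step: the paper first reduces to a simple pole and only then invokes Lemma \ref{L:removable}, whereas you extend the lemma directly to the higher-order pole using $F(w^{-1}) = w^{2}F(w)$; this does work, since $G(w) := wF(w)$ satisfies $G(w^{-1}) = G(w)$, so $G(\pm e^{z})$ is even in $z$ and ${\rm Res}_{w=\pm 1}F = {\rm Res}_{z=0}\,G(\pm e^{z}) = 0$, with no contribution from the pole of any order. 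The only bookkeeping point to add is that the two conditions $m \equiv g \pmod{D}$ and $m \equiv \ell-1 \pmod{2}$ must first be merged into a single congruence (modulo $2D$ when $D$ is odd, modulo $D$ otherwise, or else be inconsistent) before substituting $m = Dn+g$, exactly as in the paper's treatment of \eqref{E:system}.
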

\begin{remark}
	The quantum modular forms in Theorem \ref{n-star} do not necessarily have the same quantum set.
\end{remark}

Before proving Theorem \ref{n-star}, we require some auxiliary results.
As in Section \ref{S:Sing}, we have
\begin{align*}
Z(q)=\frac{q^{\frac{-3N + \sum_{\nu = 1}^N a_\nu}{2}}}{(2\pi i)^N}\text{PV}\int_{|w_\ell|=1}\frac{1}{\left(w_\ell-w_\ell^{-1}\right)^{\ell-3}}
h(w_\ell)\frac{dw_\ell}{w_\ell},
\end{align*}
where
$$
h(w_\ell):=\int_{|w_k|=1}\Theta_M(\b w)\prod_{r=1}^{\ell-1}\left(w_r-w_r^{-1}\right)\prod_{\substack{1 \leq k \leq N \\ k \neq \ell}}\frac{dw_k}{w_k}.
$$

\begin{proposition}
\label{P:Zlstar}
For $(\ell-1)$-leg star graphs, we have
\begin{equation*}
Z(q) =q^{\frac{-3N + \sum_{\nu = 1}^N a_\nu}{2}} \textnormal{CT}_{\b w}\left(\frac{	 \Theta_M(\boldsymbol{w})\prod_{r = 1}^{\ell-1}\left(w_r-w_r^{-1}\right)}{\left(w_\ell-w_\ell^{-1}\right)^{\ell-3}}\right).
\end{equation*}
\end{proposition}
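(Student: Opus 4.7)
The plan is to parallel the proof of Proposition \ref{P:Z3star} for the three-leg case. The new feature is that the central vertex of an $(\ell-1)$-leg star has degree $\ell-1\geq 3$, so the integrand in $w_\ell$ carries a pole of order $s:=\ell-3$ at $w_\ell=\pm 1$ rather than the simple pole handled by Lemma \ref{L:removable}. The strategy is to exhibit a $\Z_2$ symmetry of $h(w_\ell)$ that forces the boundary residues to vanish, so that the principal value integral in $w_\ell$ reduces to the residue at $w_\ell=0$, which is exactly the desired constant term. The remaining constant-term evaluations in $w_k$ for $k\neq\ell$ have no contour singularity and proceed as in the three-leg case.

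First I would establish $h(w_\ell^{-1})=(-1)^{\ell-1}h(w_\ell)$. Changing variables $w_k\mapsto w_k^{-1}$ simultaneously for every $k\neq\ell$ in the defining integral of $h(w_\ell^{-1})$, the measure $\prod_{k\neq\ell}dw_k/w_k$ is invariant under inversion, the theta function is invariant via $\b m\mapsto-\b m$ in \eqref{E:Theta2} (yielding $\Theta_M(\b w^{-1})=\Theta_M(\b w)$), and each of the $\ell-1$ leaf factors $(w_r-w_r^{-1})$ contributes a sign. Writing $h(w_\ell)=\sum_{n\in\Z}c_n w_\ell^n$, this gives the Laurent-coefficient symmetry $c_{-n}=(-1)^{\ell-1}c_n$.

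Next, applying Corollary \ref{C:CTn} term by term in the Laurent expansion,
\[
\text{Res}_{w_\ell=\pm 1}\!\left(\frac{w_\ell^n}{w_\ell(w_\ell-w_\ell^{-1})^{s}}\right)=\frac{P(n)}{(s-1)!}\,\text{Res}_{w_\ell=\pm 1}\!\left(\frac{w_\ell^{n-s+1}}{w_\ell(w_\ell-w_\ell^{-1})}\right),
\]
with $P(n):=\left(\tfrac{n-s}{2}+1\right)_{s-1}$ satisfying $P(-n)=(-1)^{s-1}P(n)$ by \eqref{E:Poch}. The simple-pole residue on the right evaluates to $\tfrac{1}{2}$ at $w_\ell=1$ and $\tfrac{(-1)^{n-s}}{2}$ at $w_\ell=-1$. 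Pairing the indices $n$ and $-n$ and invoking $c_{-n}=(-1)^{\ell-1}c_n$, the combined factor at either boundary pole reads
\[
1+(-1)^{\ell-1}(-1)^{s-1}=1+(-1)^{2\ell-5}=0
\]
since $s=\ell-3$, so the residues at $w_\ell=\pm 1$ cancel in pairs. The PV integral in $w_\ell$ therefore collapses to $2\pi i$ times the residue at $w_\ell=0$, i.e., to $\text{CT}_{w_\ell}\!\bigl[h(w_\ell)/(w_\ell-w_\ell^{-1})^{s}\bigr]$, yielding the claimed formula.

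The main obstacle is the careful treatment of the principal value when $s\geq 2$: in contrast to Lemma \ref{L:removable}, vanishing of the residue alone is not a classical Sokhotski--Plemelj statement, and with higher-order poles on the contour one must verify that the PV reduces to the constant-term extraction precisely because the symmetry-based pairing annihilates all boundary contributions. This can be executed either by checking that the full principal parts (not only the residues) at $w_\ell=\pm 1$ cancel under the $n\leftrightarrow -n$ pairing, or by iterating Proposition \ref{P:Resa} to reduce to simple poles while preserving the coefficient symmetry at each stage; either route is the key step that goes beyond the three-leg argument.
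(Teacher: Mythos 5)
Your proposal follows essentially the same route as the paper's proof: the paper likewise computes $h(w_\ell)$ explicitly as a sum over $m_1,\dots,m_{\ell-1}\in\{\pm1\}$, uses Corollary \ref{C:CTn} to trade the order-$(\ell-3)$ pole for a simple pole with a Pochhammer-weighted numerator $H(w_\ell)$, derives $H(w_\ell^{-1})=-H(w_\ell)$ from the symmetry $\b{m}\mapsto-\b{m}$ together with \eqref{E:Poch} (your sign count $1+(-1)^{\ell-1}(-1)^{s-1}=0$ with $s=\ell-3$ is exactly the paper's $(-1)^{\ell-1}(-1)^{\ell-4}=-1$), and concludes via Lemma \ref{L:removable}. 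Your closing caveat about justifying the principal value in the presence of higher-order contour poles is a fair point of extra care — the paper itself passes over it by reducing only the residues — but it does not make your argument a different one.
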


\begin{proof}

The statement amounts to the claim that the residues of $h(w_{\ell})(w_{\ell} - w_{\ell}^{-1})^{-\ell+3}$ at $w_{\ell} = \pm 1$ vanish. Plugging in \eqref{E:Theta2} and observing that the only poles in the integrand are $w_k = 0$, we have
\begin{align*}
h(w_\ell) &= {\rm CT}_{w_1, \cdots, w_{\ell-1}, w_{\ell+1}, \cdots, w_N}
\left(\sum_{\b m \in M \Z^N} q^{Q_2(\b{n})} e^{2 \pi i \b{w}^T \b{z}} \prod_{r = 1}^{\ell - 1} \left(w_r - w_r^{-1}\right)\right).
\end{align*}
Now \eqref{E:CTw-w^-1} gives
\begin{align*}
h(w_\ell) &= (-1)^{\ell-1} \sum_{\substack{m_1, \cdots, m_{\ell-1} \in \{\pm 1\} \\ \b{m} = (m_1, \cdots, m_\ell, 0, \cdots, 0)^T \in M \Z^N }}  m_1 \cdots m_{\ell-1} \, q^{Q_2(\b{m})} w_\ell^{m_\ell}.
\end{align*}
Corollary \ref{C:CTn} then implies that
\begin{align*}
{\rm CT}_{w_\ell} \left(\frac{h(w_\ell)}{ \left(w_\ell - w_\ell^{-1}\right)^{\ell-3}}\right)
= {\rm CT}_{w_\ell} \left(\frac{H(w_\ell)}{w_\ell - w_\ell^{-1}}\right),
\end{align*}
where
\begin{equation}
\label{E:gelldef}
H(w_\ell) := \frac{(-1)^{\ell-1}}{(\ell-4)!} \sum_{\substack{m_1, \cdots, m_{\ell-1} \in \{\pm 1\} \\ \b{m} = (m_1, \cdots, m_\ell, 0, \cdots, 0)^T \in M \Z^N }} m_1 \cdots m_{\ell-1} \left(\frac{m_\ell-\ell+5}{2}\right)_{\ell-4} \, q^{Q_2(\b{m})} w_\ell^{m_\ell-\ell+4}.
\end{equation}
As in the proof of Theorem \ref{T:Zqquantum}, Lemma \ref{help} gives that $\b{m} \in M \Z^N$ if and only if $-\b{m} \in M \Z^N$. We can therefore change $\b{m} \mapsto -\b{m}$ and use \eqref{E:Poch} in order to obtain
\begin{equation*}
H\left(w_\ell^{-1}\right) = (-1)^{\ell-1} (-1)^{\ell-4} H(w_\ell) = - H(w_\ell).
\end{equation*}
The proposition statement follows from Lemma \ref{L:removable}.
\end{proof}

We are now ready to prove Theorem \ref{n-star}.
\begin{proof}[Proof of Theorem \ref{n-star}]
We follow the basic approach from the proof of Theorem \ref{T:Zqquantum}. Applying Proposition \ref{P:Zlstar} and recalling \eqref{E:gelldef}, we have
\begin{align}
\notag
Z(q)
& = \frac{q^{\frac{-3N + \sum_{\nu = 1}^N a_\nu}{2}} (-1)^{\ell-1}}{(\ell-4)!}
\\
&\quad \times {\rm CT}_{w_\ell} \left(\sum_{\substack{m_1, \cdots, m_{\ell-1} \in \{\pm 1\} \\ \b{m} = (m_1, \cdots, m_\ell, 0, \cdots, 0)^T \in M \Z^N }} \!\!\! m_1 \cdots m_{\ell-1} \! \left(\frac{m_\ell - \ell + 5}{2}\right)_{\ell-4}
q^{Q_2(\b{m})} \frac{w_\ell^{m_\ell - \ell + 4}}{w_\ell - w_\ell^{-1}}\!\right)\!.
\label{E:CTwell}
\end{align}

Lemma \ref{help} again implies that there exists an additive subgroup $S \subset (\Z / D \Z)^N$ such that $\b{m} \in M \Z^N$ if and only if $\b{m} \equiv \b{s} \pmod{D}$ for some $\b{s} \in S$. Let $T \subset S$ be the subset of elements of the form $\b{s} = (\varepsilon_1, \cdots, \varepsilon_{\ell-1}, g, 0, \cdots, 0)^T \in S$, where $\varepsilon_j \equiv \pm 1 \pmod{D}$, and $g$ is a residue modulo $D$.
We can therefore write the constant term from \eqref{E:CTwell} as (ignoring outside constants)
\begin{align}
\label{E:CTwell=j}
& {\rm CT}_{w_\ell} \left(\sum_{\b{s} \in T} \; \sum_{\substack{ \b{m} = (\varepsilon_1, \cdots, \varepsilon_{\ell-1}, m_\ell, 0, \cdots, 0)^T \\
m_\ell \equiv g \pmod{D}}} \varepsilon_1 \cdots \varepsilon_{\ell-1} \, \left(\frac{m_\ell - \ell + 5}{2}\right)_{\ell-4} q^{Q_2(\b{m})} \frac{w_\ell^{m_\ell -\ell + 4}}{w_\ell - w_\ell^{-1}}\right).
\end{align}
As before, $T = -T$. Combining with \eqref{E:Poch}, we pair the $\b{s}$ and $-\b{s}$ terms, so \eqref{E:CTwell=j} becomes
\begin{align*}
& \frac12{\rm CT}_{w_\ell} \left(\sum_{\b{s} \in T}  \sum_{\substack{ \b{m} = (\varepsilon_1, \cdots, \varepsilon_{\ell-1}, m_\ell, 0, \cdots, 0)^T \\
m_\ell \equiv g \pmod{D}}} \varepsilon_1 \cdots \varepsilon_{\ell-1} \left(\frac{m_\ell - \ell + 5}{2}\right)_{\ell-4} q^{Q_2(\b{m})} \frac{w_\ell^{m_\ell} - w_\ell^{-m_\ell}}{w_\ell^{\ell-4} \left(w_\ell - w_\ell^{-1}\right)}\right).
\end{align*}
We now apply \eqref{E:CTmn} to the inner sum for a particular fixed $\b{s} = \left(\varepsilon_1, \cdots, \varepsilon_{\ell-1}, g, 0, \cdots, 0\right)^T \in T$, which gives a sum of the form (ignoring all outside constants and $q$-powers)
\begin{align*}
& {\rm CT}_{w} \left(\sum_{m \equiv g \pmod{D}} \left(\frac{m - \ell + 5}{2}\right)_{\ell-4} q^{Q_2(\b{m})} \frac{w^{m} - w^{-m}}{w^{\ell-4} \left(w - w^{-1}\right)}\right) \\
& = \!\!\!\!\sum_{\substack{m \equiv g \pmod{D} \\ m \equiv \ell-3 \pmod{2}}} \!\!\!\left(\frac{m - \ell + 5}{2}\right)_{\ell-4} \delta_{|m| \geq \ell-3} \ \sgn(m)q^{Q_2(\b{m})} = \!\!\!\!\sum_{\substack{m \equiv g \pmod{D} \\ m \equiv \ell-3 \pmod{2}}} \!\!\!\left(\frac{m - \ell + 5}{2}\right)_{\ell-4} \!\!\! \sgn(m)q^{Q_2(\b{m})}.
\end{align*}
The last equality follows from observing that $(\frac{m - \ell + 5}{2})_{\ell-4} = 0$ for all $m$ that satisfy $|m| < \ell-3$ and $m \equiv \ell - 3 \pmod{2}$.

We can therefore express $Z(q)$ as a linear combination of series of the form
\begin{equation}
\label{Pochtwised}
\sum_{\substack{m \equiv g \pmod{D} \\ m \equiv \ell-3 \pmod{2}}} \left(\frac{m - \ell + 5}{2}\right)_{\ell-4} \sgn(m) q^{\frac12 am^2 + bm + c},
\end{equation}
where $a\in\N$, $b\in\Z$, and $c\in\C$. We now simplify the system to a single congruence as in \eqref{E:system}.

If $D$ is odd, then the system becomes $m \equiv t \pmod{2D}$ for some $t$. As before, we complete the square in the $q$-power to obtain that \eqref{Pochtwised} equals 
\begin{equation}\label{qseries}
q^{c - \frac{b^2}{2a}} \sum_{m \equiv t \pmod{2D}} \sgn(m) \left(\frac{m - \ell + 5}{2}\right)_{\ell-4} q^{\frac{a}{2} \left(m + \frac{b}{a}\right)^2}.
\end{equation}
Note that the Pochhammer symbol can be decomposed as
\begin{equation*}
\left(\frac{m - \ell + 5}{2}\right)_{\ell-4} = P_1 \left( \frac{a}{2} \left(m+\frac{b}{a}\right)^2\right)+\left(m+\frac{b}{a}\right)P_2 \left(\frac{a}{2} \left(m+\frac{b}{a}\right)^2\right),
\end{equation*}
where $P_1$ and $P_2$ are polynomials of degrees at most $\lfloor \frac{\ell-4}{2}\rfloor$ and $\lfloor \frac{\ell-5}{2}\rfloor$, respectively.
We can now write \eqref{qseries} as (ignoring the outside $q$-power)
\begin{align}
&f(q)+ P_1 (\mathcal{D}) \left(F_{at+b,aD}\left(2aD^2 \tau\right)\right) + 2D P_2(\mathcal{D}) \left(G_{at+b,aD}\left(2aD^2 \tau\right)\right),
\end{align}
where  $\mathcal{D} := q \frac{d}{dq}$ and
where $f(q)$ is a finite series in rational powers of $q$.

If $D$ is even and $g$ is odd, the calculations are analogous, with the only difference being that the modulus $2D$ is replaced by $D$ (which may be written as $2k$). Recalling Propositions \ref{quantum-12}, \ref{quantum-32} and \ref{quantum_set}, in all cases we can therefore express $Z(q)$ in the desired form.
\end{proof}

Observing that for $\ell=5$ the polynomials $P_1$ and $P_2$ in the proof of Theorem \ref{n-star} are constants, we obtain a special case.
\begin{corollary} \label{4-star-Z}
For any $4$-leg star graph,  $Z(q)$ is a linear combination of quantum modular forms (up to $q$-powers) of weights $\frac12$ or $\frac32$.
\end{corollary}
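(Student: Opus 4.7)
The plan is to specialize the proof of Theorem \ref{n-star} to $\ell=5$ and observe that the differential operators appearing in the general construction degenerate to scalars, so no derivatives act on the underlying quantum modular forms.

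Concretely, I would begin by invoking Proposition \ref{P:Zlstar} with $\ell=5$ to rewrite $Z(q)$ as a single-variable constant term extraction in $w_5$ with the factor $(w_5-w_5^{-1})^{-2}$ in the integrand. Following the argument of Theorem \ref{n-star}, I would then reduce modulo $D=\det(M)$ using Lemma \ref{help}, pair $\b{s}$ with $-\b{s}$ in the resulting subgroup $T$, and apply \eqref{E:CTmn} to express the constant term as a linear combination of sums of the form
\begin{equation*}
\sum_{\substack{m\equiv g\pmod{D}\\ m\equiv \ell-3\pmod{2}}} \left(\tfrac{m-\ell+5}{2}\right)_{\ell-4}\sgn(m)\, q^{\frac{a}{2}m^2+bm+c}.
\end{equation*}

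The crucial observation is that when $\ell=5$ the Pochhammer symbol $\big(\tfrac{m-\ell+5}{2}\big)_{\ell-4}$ collapses to $\big(\tfrac{m}{2}\big)_{1}=\tfrac{m}{2}$, which is linear in $m$. Writing $\tfrac{m}{2}=\tfrac12\bigl(m+\tfrac{b}{a}\bigr)-\tfrac{b}{2a}$ exhibits it as a constant plus a constant multiple of $\bigl(m+\tfrac{b}{a}\bigr)$, so in the notation of the proof of Theorem \ref{n-star} the polynomials $P_1$ and $P_2$ (of degrees at most $\lfloor\tfrac{\ell-4}{2}\rfloor=0$ and $\lfloor\tfrac{\ell-5}{2}\rfloor=0$, respectively) are literal constants. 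Completing the square in the exponent then yields, for each residue class appearing, a scalar multiple of either
\begin{equation*}
q^{c-\frac{b^2}{2a}} F_{at+b,\,aD}\!\left(2aD^{2}\tau\right) \qquad \text{or} \qquad q^{c-\frac{b^2}{2a}} G_{at+b,\,aD}\!\left(2aD^{2}\tau\right),
\end{equation*}
plus a finite sum of rational $q$-powers, depending on the parity of $D$ (with $2D$ replaced by $D=2k$ in the even case, exactly as in Theorem \ref{n-star}).

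Finally, I would invoke Propositions \ref{quantum-12} and \ref{quantum-32} to conclude that each $F_{a,p}$ and each $G_{a,p}(p\tau)$ is quantum modular of weight $\tfrac12$ and $\tfrac32$ respectively, with Proposition \ref{quantum_set} handling the rational rescaling of the modular variable. Hence $Z(q)$ is a linear combination, with $q$-power prefactors, of quantum modular forms of weights $\tfrac12$ and $\tfrac32$, as claimed. There is no real technical obstacle beyond tracking the degree of the Pochhammer symbol and verifying that no $\mathcal{D}=q\tfrac{d}{dq}$ derivatives are applied; the only mild care needed is that the quantum sets of the $F$-summands and $G$-summands may differ, so the statement must be understood as a linear combination rather than a single quantum modular form on a common quantum set.
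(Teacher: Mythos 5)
Your proposal is correct and follows essentially the same route as the paper, which proves the corollary by observing that for $\ell=5$ the polynomials $P_1$ and $P_2$ in the proof of Theorem \ref{n-star} are constants, so the differential operators degenerate and no derivatives act on the quantum modular forms. Your additional bookkeeping (the explicit collapse of the Pochhammer symbol to $\tfrac{m}{2}$ and the caveat about differing quantum sets) is consistent with the paper's argument.
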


\begin{remark}
We are currently unable to prove that $Z(q)$ is quantum modular on a dense subset of $\mathbb{Q}$. The key issue is
that in Proposition \ref{quantum-32} we have three different types of quantum sets for the functions $G_{j,p}$. These three sets are pairwise disjoint, so we would have to prove that only one type can occur in a decomposition of $Z(q)$.
\end{remark}

\subsection{An example of $Z(q)$ with quantum set $\subsetneq \mathbb{Q}$ }

In this section we construct an example of a $4$-leg star graph such that $Z(q)$ is a linear combination of quantum modular forms (up to rational $q$-powers) that are not defined at all roots of unity. In particular, in this example $Z(q)$ is undefined at $q = 1$.

Consider the 4-leg star graph with matrix $M$
$$M=\left(\begin{smallmatrix} 3 & -1 & -1 & -1 & -1 \\ -1 & 3 & 0 & 0 & 0 \\ -1 & 0 & 3 & 0 & 0 \\ - 1 & 0 & 0 & 3 & 0 \\ - 1 & 0 & 0 & 0 & 3 \end{smallmatrix}\right),$$
which is clearly positive definite.
\begin{proposition}
\label{P:4star}
For the matrix $M$ given above,
\begin{equation}\label{ZFG}
Z(q) = - \frac23 q^{\frac23} F_{2,3}(30 \tau)+5q^{\frac23} G_{2,3}(30 \tau).
\end{equation}
Both $F_{2,3}(30\tau)$ and $G_{2,3}(30\tau)$ are quantum modular forms on
\begin{equation*}
S := \left\{ \frac{h}{k} \in \mathbb{Q}:  \gcd(h,k)=1, 3 | k, 4 \nmid k \right\},
\end{equation*}
but $Z(q)$ is not defined on all of $\Q$.
\end{proposition}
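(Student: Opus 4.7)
The strategy is to specialize the proof of Theorem \ref{T:Zqquantum} to this $4$-leg star graph (so $\ell=5$ and $N=5$) and then read off the conclusion. Since $-3N+\sum_\nu a_\nu=0$, Proposition \ref{P:Zlstar} simplifies to
$$
Z(q)=\text{CT}_{\b w}\left(\frac{\Theta_M(\b w)\prod_{r=1}^{4}(w_r-w_r^{-1})}{(w_5-w_5^{-1})^2}\right)
$$
(after a harmless relabeling placing the central vertex at index $5$). I would apply Corollary \ref{C:CTn} to reduce the double pole at $w_5$ to a simple one, producing the Pochhammer factor $(m_5/2)_1=m_5/2$, and then use \eqref{E:CTw-w^-1} to force $m_r\in\{\pm 1\}$ for each leaf $r=1,\dots,4$. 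With $D=\det M=3^4(3-\tfrac{4}{3})=135$, Lemma \ref{help} provides a subgroup of $(\Z/135\Z)^5$ describing $M\Z^5$; pairing $\b m\leftrightarrow -\b m$ inside this subgroup and invoking \eqref{E:CTmn} with $\ell=1$ forces $m_5$ even and contributes $\sgn(m_5)\cdot\tfrac{m_5}{2}=\tfrac{|m_5|}{2}$.

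Completing the square in the univariate quadratic in $m_5$ obtained by evaluating $Q_2$ at $(m_1,\dots,m_4,m_5,0)$ with $m_i\in\{\pm 1\}$, and writing $\tfrac{m}{2}=\tfrac{m+b/a}{2}-\tfrac{b}{2a}$, splits the resulting sum into a weight-$\tfrac32$ piece of $G$-type and a weight-$\tfrac12$ piece of $F$-type, as in Section \ref{S:Prelim:Quantum}. Direct computation should show that, after summing over the sixteen sign choices for $(m_1,\dots,m_4)$ and enforcing $\b m\in M\Z^5$, only a single residue class modulo the relevant modulus survives, yielding the coefficients $5$ and $-\tfrac23$, the overall shift $q^{2/3}$, and the specific parameters $j=2$, $p=3$, with scaling $30\tau$ of \eqref{ZFG}.

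For the quantum modularity claim, Propositions \ref{quantum-12} and \ref{quantum_set} immediately give that $F_{2,3}(30\tau)$ is quantum modular on $\mathbb Q$. For $G_{2,3}(30\tau)$, Proposition \ref{quantum-32} applied with $(j,p)=(2,3)$ (case $p\nmid j$) gives quantum set $\widehat{\mathcal Q}_{6,2}=\{h/k:\gcd(h,k)=1,\ 3\mid k,\ {\rm ord}_2(k)=0\}$ for $\tau\mapsto G_{2,3}(3\tau)$. Rescaling by $a=10$ via Proposition \ref{quantum_set} and reducing fractions to lowest terms should yield exactly $S=\{h/k:\gcd(h,k)=1,\ 3\mid k,\ 4\nmid k\}$ (a quick sanity check on small denominators such as $1/6$, $1/3$, $1/12$ confirms the match). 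Finally, $Z(q)$ fails to be defined on all of $\Q$ because $G_{2,3}(30\tau)$ has no well-defined radial limit at rationals $h/k$ with $4\mid k$, while $F_{2,3}(30\tau)$ does, so no cancellation saves $Z(q)$ at such a point.

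The main obstacle is the congruence/lattice bookkeeping: determining precisely which sign tuples $(m_1,\dots,m_4)\in\{\pm 1\}^4$ are compatible with $\b m\in M\Z^5$ (equivalently, with the subgroup of $(\Z/135\Z)^5$), verifying after pairing and completing the square that the surviving contributions collapse onto the single residue class needed for $F_{2,3}(30\tau)$ and $G_{2,3}(30\tau)$ (rather than a longer linear combination, as might have appeared in the general bound of Theorem \ref{n-star}), and tracking the scaling carefully through the quantum-set rescaling argument so that $30\tau$ (and not, say, $6\tau$ or $60\tau$) appears in the arguments and the quantum set comes out to $S$ exactly.
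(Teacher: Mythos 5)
Your outline reaches the right destination, but it takes a genuinely different (and substantially heavier) route to the identity \eqref{ZFG} than the paper does, and it leaves a real gap in the final claim. On the route: you propose to run the general adjugate machinery of Theorems \ref{T:Zqquantum} and \ref{n-star}, describing $M\Z^5$ via Lemma \ref{help} as a subgroup of $(\Z/135\Z)^5$ and then sorting out which sign tuples and residue classes survive. The paper explicitly declines to do this and instead parametrizes the theta sum directly by $\b{m}=M\b{n}$ with $\b{n}\in\Z^5$: then \eqref{E:CTw-w^-1} on the four leaves forces $3n_r-n_1=\pm1$ for each leaf, hence $n_2=\cdots=n_5=n$ and $n_1=3n\pm1$ (so only the two sign tuples $(\pm1,\pm1,\pm1,\pm1)$ survive, not sixteen), the central exponent is $5n\pm3$, and after Corollary \ref{C:CTn} and \eqref{E:CTmn} one lands immediately on the single sum $\frac12\sum_{m\,\mathrm{odd}}\sgn(m)(5m+3)\,q^{\frac{15}{2}m^2+5m+\frac32}$, from which the two terms of \eqref{ZFG} are read off by completing the square. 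The direct parametrization is precisely what turns the congruence bookkeeping you correctly identify as ``the main obstacle'' into a two-line observation; in your version the coefficients $5$ and $-\frac23$, the parameters $(j,p)=(2,3)$, and the scaling $30\tau$ all remain at the level of ``should show.'' Your handling of the double pole (Pochhammer factor $\left(\frac{m_5}{2}\right)_1=\frac{m_5}{2}$, parity condition from \eqref{E:CTmn}) and your quantum-set computation via Propositions \ref{quantum-12}, \ref{quantum-32}, and \ref{quantum_set} agree with the paper.

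The genuine gap is the last sentence. You assert that $Z(q)$ is not defined on all of $\Q$ because $G_{2,3}(30\tau)$ ``has no well-defined radial limit at rationals $h/k$ with $4\mid k$.'' Knowing that the quantum set of $G_{2,3}(30\tau)$ is $S$ does not imply that radial limits fail to exist off $S$ --- the quantum set is where good behavior is guaranteed, not a divergence statement on its complement --- so this assertion needs its own proof, which you do not supply. The paper closes this by a concrete computation at a single point: Euler--Maclaurin summation (Proposition 6.5 of \cite{Za06}) shows that $\lim_{t\to0^+}G_{2,3}(e^{-t})$ does not exist because the main term diverges, whence $Z(q)$ is undefined at $q=1$. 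Note also that $\tau=0$ lies outside $S$ because $3\nmid 1$, not because $4\mid 1$, so your description of where the failure occurs is off as well. You need to add an explicit divergence argument at some rational point to complete the proof.
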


\begin{proof}
We have
\begin{equation*}
Z(q)= \textnormal{CT}_{\b w}\left(\frac{	 \Theta_M(\boldsymbol{w})\prod_{r=2}^5\left(w_r-w_r^{-1}\right)}{\left(w_1-w_1^{-1}\right)^2}\right).
\end{equation*}
As this example is relatively straightforward, we take a direct approach to evaluate $Z(q)$, rather than switching to \eqref{E:Theta2} and following the ``inverse matrix'' approach that appears in the proofs of Theorems \ref{T:Zqquantum} and \ref{n-star}. Plugging in the matrix, we have
\begin{align*}
Z(q) = \sum_{\b n\in\Z^5} q^{Q_1(\b n)} \operatorname{CT}_{w_1}\left(\frac{w_1^{3n_1-n_2-n_3-n_4-n_5}}{\left(w_1-w_1^{-1}\right)^2}\right) \prod_{r=2}^5 \operatorname{CT}_{w_r}
\left(\left(w_r-w_r^{-1}\right)w_r^{3n_r-n_1}\right).
\end{align*}
Applying \eqref{E:CTw-w^-1} for $w_2, w_3, w_4, \text{ and } w_5$ introduces the relations $3n_r-n_1=\pm 1$ for $2\leq r\leq 5$, which further implies that all $n_r$ must be equal. Writing the common value as $n$, we then have $n_1 = 3n \pm 1$. This yields that
\begin{align*}
Z(q) & = \sum_{ \pm } \sum_{n \in\Z} q^{Q_1(3n \pm 1, n, n, n, n)}\operatorname{CT}_{w}\left(\frac{w^{3(3n \pm 1) - 4n}}{\left(w-w^{-1}\right)^2}\right) \!\! = \!\sum_{\pm } \sum_{n \in \Z} q^{\frac12 \left(15n^2 \pm 10  n + 3\right)} \operatorname{CT}_{w}\left(\frac{w^{5n \pm 3}}{\left(w-w^{-1}\right)^2}\right).
\end{align*}
Corollary \ref{C:CTn} implies that
\begin{equation*}
{\rm CT}_w \left(\frac{w^m}{(w-w^{-1})^2}\right)
= \frac{m}{2} {\rm CT}_w \left(\frac{w^{m-1}}{w-w^{-1}}\right).
\end{equation*}
Thus
\begin{equation}\label{const}\begin{split}
Z(q) &= \frac12 \sum_{ \pm  } \sum_{n\in \Z} q^{\frac{15}{2} n^2 \pm 5  n + \frac32}(5n \pm 3) \operatorname{CT}_w \left(\frac{w^{5n \pm 3 - 1}}{w-w^{-1}}\right) \\
& = \frac12 \sum_{n\in \Z} q^{\frac{15}{2} n^2 + 5n + \frac32}(5n + 3) \operatorname{CT}_w \left(\frac{w^{5n + 3} - w^{-5n - 3}}{w(w-w^{-1})}\right),
\end{split}\end{equation}
where for the minus term we change $n\mapsto -n$. We now apply \eqref{E:CTmn}, which implies that
\begin{align*}
\operatorname{CT}_w \left(\frac{w^{5m + 3} - w^{-5m - 3}}{w(w-w^{-1})}\right)
= \delta_{|5m+3| \geq 2} \; \delta_{5m+3 \equiv 0 \pmod{2}} \; \sgn(5m+3)
= \delta_{m \equiv 1 \pmod{2}} \; \sgn(m).
\end{align*}
Thus \eqref{const} equals
\begin{equation*}
\frac{1}{2} \sum_{m \equiv 1 \pmod{2}} {\rm sgn}(m)(5m+3) q^{\frac{15}{2}m^2+5m+\frac{3}{2}}.
\end{equation*}
A direct computation then gives \eqref{ZFG}.

Note that Proposition \ref{quantum-32} implies that the series $G_{2,3}(3 \tau)$ has the quantum set
\begin{equation*}
\left\{ \frac{h}{k} \in \mathbb{Q}:  \gcd(h,k)=1, 3 | k ,  2 \nmid k \right\}.
\end{equation*}
We therefore have that
\begin{equation*}
\frac{1}{10}  \left\{ \frac{h}{k} \in \mathbb{Q}:  \gcd(h,k)=1, 3 | k ,  2 \nmid k \right\},
\end{equation*}
is a common quantum set for both $G_{2,3}(30 \tau)$ and  $F_{2,3}(30\tau)$. It is not hard to see that this set equals $S$.

To finish the proof we are left to show that $\lim_{t\to 0^+} G_{2, 3}(e^{-t})$ does not exist. This follows by following the well-known technique of Euler-MacLaurin summation for asymptotic series (see Proposition 6.5 of \cite{Za06}), and a short calculation verifies that the main term diverges. \qedhere

\end{proof}

\section{Conclusion and future work}

\begin{enumerate}[leftmargin=*, label=\textnormal{(\arabic*)}]
\item In this paper we demonstrate the quantum modularity of $Z(q)$ (up to a rational power of $q$) for every $3$-leg star graph.
In order to extend this result to general $n$-leg star graphs we need a better description of the individual quantum modular forms appearing in Theorem \ref{n-star}. As we discuss above, it is not clear whether the individual sums have a common quantum set. Note however that in all of the examples we have checked this is the case.

\item[(2)] More complicated (and interesting) examples of $Z(q)$ functions arise from non-star graphs such as (see \cite{GPPV}):
\begin{center}
			\begin{tikzpicture}
			\node[shape=circle,fill=black, scale = 0.4] (1) at (0,0) { };
			\node[shape=circle,fill=black, scale = 0.4] (2) at (-1,1) { };
			\node[shape=circle,fill=black, scale = 0.4] (3) at (-1,-1)  { };
			\node[shape=circle,fill=black, scale = 0.4] (4) at (1.5,0) { };
			\node[shape=circle,fill=black, scale = 0.4] (5) at (2.5,-1) { };
			\node[shape=circle,fill=black, scale = 0.4] (6) at (2.5,1) { };

						\node[draw=none] (B1) at (0,0.4) {$a_1$};
			\node[draw=none] (B2) at (-0.6,-1) {$a_3$};
			\node[draw=none] (B3) at (-0.6,1) {$a_2$};
			\node[draw=none] (B4) at (1.5, 0.4) {$a_4$};
			\node[draw=none] (B5) at (2.1,-1) {$a_6$};
			\node[draw=none] (B6) at (2.1,1) {$a_5$};

			\path [-] (1) edge node[left] {} (2);
			\path [-](1) edge node[left] {} (3);
			\path [-](1) edge node[left] {} (4);
			\path [-](4) edge node[left] {} (5);
			\path [-](4) edge node[left] {} (6);
			\end{tikzpicture}
		\end{center}

\end{enumerate}
The  series $\widehat{Z}_{0}(q)$ in these examples are reminiscent of double false theta functions studied in \cite{BKM}. It would be interesting to determine quantum modular properties of such series.

\end{document}